\documentclass[12pt]{article}

\usepackage{amsmath,amssymb,amsthm,amssymb}
\usepackage{tikz,tkz-graph,tikz-cd,tikz-qtree}
\usepackage{youngtab}
\usepackage[OT2,T1]{fontenc}
\usepackage{indentfirst}
\usepackage[hidelinks]{hyperref}
\usepackage{booktabs}
\usepackage{float}
\usepackage{mathrsfs}

\newtheorem{theorem}{Theorem}[section]
\newtheorem{lemma}[theorem]{Lemma}
\newtheorem{corollary}[theorem]{Corollary}

\newtheorem{definition}[theorem]{Definition}
\newtheorem{proposition}[theorem]{Proposition}
\newtheorem{remark}[theorem]{Remark}

\newtheorem{examples}[theorem]{Example}

\begin{document}
\title{The Optimal Coefficients-Based Criterion for Primitive Quadratic Polynomials over Finite Fields}
\author{Hongfeng Wu$^{1}$ and Li Zhu$^{2}$\footnote{Corresponding author.}
\setcounter{footnote}{-1}
\footnote{E-mail addresses: whfmath@gmail.com (H. Wu), lizhumath@pku.edu.cn (L. Zhu).
}
\\[0.5ex]
\small $^{1}$College of Science, North China University of Technology, Beijing, China
\\
\small $^{2}$School of Mathematical Sciences, Guizhou Normal University, Guiyang, China}
	
\date{}
\maketitle

\thispagestyle{plain}
\setcounter{page}{1}
	
\begin{abstract}
	Let $\mathbb F_q$ be a finite field and consider quadratic polynomials
	$f(X)=X^2+bX+c\in\mathbb F_q[X]$ with primitive constant term $c$.
	We construct an optimal coefficients-based determining polynomial for
	primitive quadratic polynomials over every finite field. More precisely,
	for every primitive $c\in\mathbb F_q$, its specialization is the unique
	monic square-free polynomial whose roots are exactly the coefficients
	$b$ for which $X^2+bX+c$ is primitive. The construction is based on
	Lucas polynomials and Lucas atoms over finite fields. In odd
	characteristic, the optimal determining polynomial is the $(q+1)$-st
	Lucas atom. In characteristic $2$, this Lucas atom has multiplicity
	$2$ in the variable $B$, and the optimal polynomial is obtained by
	removing this multiplicity through the inverse Frobenius over
	$A=\mathbb F_q[C]/(\Phi_{q-1}(C))$. We prove that the determining
	polynomial is unique for each fixed primitive constant term and,
	globally, unique as an element of $A[B]$. We also give equivalent
	criteria involving only recursively computable Lucas polynomials and,
	as an application, a first-zero coefficient description of the
	binomial order and order of an irreducible quadratic polynomial.
\end{abstract}

\medskip
\noindent\textbf{Keywords:} primitive quadratic polynomials; finite fields;
coefficients-based criterion; Lucas polynomials; Lucas atoms; binomial order.

\medskip
\noindent\textbf{2020 Mathematics Subject Classification.}\\
Primary: 11T06; Secondary: 11B39, 12E20.

\section{Introduction}

Primitive polynomials form an important class of polynomials over
finite fields. A monic irreducible polynomial of degree $n$ over
$\mathbb F_q$ is called primitive if any of its roots generates the
multiplicative group $\mathbb F_{q^n}^{*}$. Equivalently, an
irreducible polynomial $f(X)$ of degree $n$ is primitive if and only
if $\operatorname{ord}(f)=q^n-1$.

Primitive polynomials arise, for example, in the construction of
finite field extensions and maximal-period linear recurring
sequences, as well as in applications of finite fields to coding
theory and cryptography. We refer to \cite{LidlNiederreiter} for
general background.

Various criteria for primitive polynomials have been obtained.
Fitzgerald \cite{Fitzgerald} gave a characterization in terms of
polynomial division. More recently, Vega \cite{VegaCharacterization} studied the
quadratic case and obtained a characterization and an explicit
description of all primitive quadratic polynomials over finite fields.
One of the tools involved is the binomial order. For an irreducible quadratic polynomial
\[
f(X)=X^2+bX+c,
\]
with primitive constant term $c$, primitivity is equivalent to its
binomial order attaining the value $q+1$.

The criteria in \cite{VegaCharacterization} involve a polynomial division process.
In \cite{VegaNecessary}, Vega further considered whether primitivity of a
quadratic polynomial could be detected directly from its coefficients.
More precisely, motivated by polynomial criteria depending only on
coefficients, one may ask whether there exists a polynomial expression
in $b$ and $c$ which determines whether $X^2+bX+c$ is primitive. Criteria of this type were obtained in \cite{VegaNecessary} for
several special forms of the parameter $q+1$.

In this paper we consider this problem for arbitrary finite fields.
We impose in addition an optimality condition on the determining
polynomial. Fix a primitive element $c\in\mathbb F_q$ and put
\[
\mathcal P_c
=
\left\{
b\in\mathbb F_q:
X^2+bX+c
\text{ is primitive over }\mathbb F_q
\right\}.
\]
An optimal determining polynomial $P_q(B,C)$ is required to satisfy
\begin{equation}\label{eq 11}
	P_q(B,c)
	=
	\prod_{b\in\mathcal P_c}(B-b)
\end{equation}
for every primitive $c$. Thus $P_q(B,c)$ is monic and square-free,
and its degree is the number of primitive quadratic polynomials with
the prescribed constant term. In particular, it has the smallest
possible degree among polynomials having $\mathcal P_c$ as their set
of roots.

Condition \eqref{eq 11} leads to the following uniqueness property. For each fixed
primitive $c$, the polynomial $P_q(B,c)$ is uniquely determined.
Globally, a representative in $\mathbb F_q[B,C]$ need not be unique,
since only the specializations at primitive values of $C$ are relevant.
Instead, let $A = \mathbb{F}_{q}[C]/(\Phi_{q-1}(C))$. We prove that the image of an optimal determining polynomial in
\[
A[B]
=
\mathbb F_q[B,C]/(\Phi_{q-1}(C))
\]
is unique. A geometric explanation for such uniqueness is given as follows. Consider the affine scheme
\[
\mathcal X
=
\mathbb A^1_{\mathbb F_q}
\times
\operatorname{Spec}A
=
\operatorname{Spec}A[B].
\]
Its $\mathbb F_q$-rational points are exactly
\[
\mathcal X(\mathbb F_q)
=
\left\{
(b,c):
b\in\mathbb F_q,\;
c\text{ is primitive in }\mathbb F_q
\right\}.
\]
And $A[B]$ is the affine coordinate ring of the scheme $\mathcal{X}$, whose elements can be viewed as regular functions on $\mathcal{X}$. Thus the above uniqueness can be formulated as that the optimal determining polynomial determines a unique
regular function on $\mathcal X$.

Our construction uses Lucas polynomials and Lucas atoms. Let
$\mathcal{U}_n(B,C)$ and $\mathcal{V}_n(B,C)$ be the two Lucas polynomials associated
with
\[
T^2+BT+C,
\]
and let $\Lambda_n(B,C)$ denote the $n$-th Lucas atom. Their atomic
decompositions are analogous to the cyclotomic decomposition of
$X^n-1$. We develop the required theory over finite fields, including
the Frobenius multiplicities which occur when the index is divisible
by the characteristic.

The connection with quadratic polynomials is obtained through
binomial order. If $f(X)=X^2+bX+c$ is irreducible with roots
$\theta$ and $\theta^q$, then
\[
U_n(b,c)
=
\frac{\theta^n-\theta^{nq}}
{\theta-\theta^q},
\]
and the binomial order of $f$ is the smallest positive integer $n$
for which $U_n(b,c)=0$. Moreover, for $n$ coprime to the
characteristic,
\[
\Lambda_n(b,c)=0
\quad\Longleftrightarrow\quad
\operatorname{ord}
\left(
\frac{\theta}{\theta^q}
\right)=n.
\]
Thus the Lucas atom $\Lambda_{q+1}$ detects the binomial order
$q+1$, which is the order relevant to primitive quadratic polynomials
with primitive constant term.

When $q$ is odd, we obtain
\[
P_q(B,C)=\Lambda_{q+1}(B,C).
\]
For every primitive $c\in\mathbb F_q$, its roots are exactly the
coefficients $b$ for which $X^2+bX+c$ is primitive, and the
specialization is monic and square-free.

When $q$ is even, the Lucas atom satisfies
\[
\Lambda_{q+1}(B,C)\in\mathbb F_q[B^2,C],
\]
so its roots in the $B$-variable occur with multiplicity $2$. To
obtain the square-free determining polynomial, we work over the
coefficient ring
\[
A=\mathbb F_q[C]/(\Phi_{q-1}(C)).
\]
The Frobenius map on $A$ is an automorphism, and its inverse gives a
canonical map
\[
\operatorname{Fr}^{-1}:A[B^2]\longrightarrow A[B].
\]
The optimal determining polynomial in characteristic $2$ is then
given by
\[
P_q(B,C)
=
\operatorname{Fr}^{-1}
\bigl(\Lambda_{q+1}(B,C)\bigr).
\]
Hence in both characteristics the determining polynomial is obtained
from the $(q+1)$-st Lucas atom, with the Frobenius multiplicity removed
in characteristic $2$. The special cases considered in
\cite{VegaNecessary} are included in this construction.

Since Lucas atoms are less convenient to compute directly than Lucas
polynomials, we also give equivalent criteria involving only Lucas
polynomials. These are obtained from least common multiples of suitable
Lucas polynomials and can be computed from their recurrence relations
without first computing $\Lambda_{q+1}$.

Finally, as an application of the Lucas polynomial framework, we
consider the polynomial-division criterion. For an irreducible
quadratic polynomial $f(X)=X^2+bX+c$, the coefficients in the
quotient
\[
\frac{X^{q+2}-cX}{f(X)}
\]
are shown to be the terms of the Lucas sequence associated to $f$.
Consequently, the position of the first zero coefficient determines
the binomial order and the order of $f$.

The paper is organized as follows. Section~2 recalls the basic
definitions concerning order, binomial order, and primitive
polynomials. Section~3 develops Lucas polynomials, Lucas atoms, and
their atomic decompositions over finite fields. Section~4 gives the
preliminary enumeration and binomial-order results. Section~5
constructs the optimal determining polynomial in odd and even
characteristic, gives the corresponding Lucas-polynomial criteria,
and proves its uniqueness. Section~6 presents the first-zero
coefficient approach as an application.
	
\section{Definitions, notations and basic results}
Throughout this note $q$ denotes a prime power. Let $\mathbb{F}_{q}$ be a finite field with $q$ elements, and $\mathbb{F}_{q}^{\ast}$ be the cyclic multiplicative group of nonzero elements in $\mathbb{F}_{q}$. Any generator of $\mathbb{F}_{q}^{\ast}$ is called a primitive element of $\mathbb{F}_{q}$. For $\lambda \in \mathbb{F}_{q}^{\ast}$, the order of $\lambda$ is defined to be the smallest positive integer $n$ such that $\lambda^{n}=1$, and is denoted by $\mathrm{ord}(\lambda)=n$.

Unless stating otherwise, in this paper polynomials are assumed to be nonconstant, monic and has a nonzero constant term. For such a polynomial $f(X) \in \mathbb{F}_{q}[X]$, there is a smallest positive integer $n$ such that $f(X) \mid X^{n}-1$. The integer $n$ is called the order of $f(X)$ and is denoted by $n=\mathrm{ord}(f)$. If $f(X)$ is irreducible over $\mathbb{F}_{q}$, then $\mathrm{ord}(f) = \mathrm{ord}(\gamma)$ for any root $\gamma$ of $f(X)$ lying in the finite extension field $\mathbb{F}_{q}[X]/(f(X))$ of $\mathbb{F}_{q}$.

Further, the minimal binomial multiple and the binomial order of $f(X)$ are defined as follow.

\begin{definition}\label{def 1}
	Define the minimal binomial multiple of $f(X)$ to be the monic binomial $X^{r}-\lambda \in \mathbb{F}_{q}[X]$ which is of the lowest degree among the binomials over $\mathbb{F}_{q}$ divisible by $f(X)$. The degree $r$ is called the binomial order of $f(X)$, and is denoted by $\mathrm{ord}_{\mathrm{b}}(f) = r$.
\end{definition}

In some references (for instance \cite{LidlNiederreiter}), quasi-order is used to refer to binomial order. We adopt the terminologies of Definition \ref{def 1}, since not only binomial order but also minimal binomial multiple is frequently involved. Some basic relations between the order and the minimal binomial multiple of $f(X)$ are summarized in the next lemma.

\begin{lemma}\label{lem 1}
	\begin{itemize}
		\item[(1)] Let $f(X)$ be a polynomial over $\mathbb{F}_{q}$, with order $n$, binomial order $r$ and minimal binomial multiple $X^{r}-\lambda$. Then $r \mid n$ and
		$$X^{r}-\lambda \mid X^{n}-1.$$
		\item[(2)] If $f(X)$ is irreducible and has order $n$, then its binomial order is $r = \frac{n}{\mathrm{gcd}(n,q-1)}$ and its minimal binomial multiple is given by
		$$X^{r}-\zeta^{r},$$
		where $\zeta$ is any root of $f(X)$.
		\item[(3)] If $f(X)$ is irreducible and has minimal binomial multiple $X^{r}-\lambda$, then its order is given by $n = r\cdot\mathrm{ord}(\lambda)$.
	\end{itemize}
\end{lemma}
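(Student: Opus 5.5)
For part (1) the plan is to use the set of binomials divisible by $f$ and a division-with-remainder argument. Let $f$ have order $n$, binomial order $r$ and minimal binomial multiple $X^r-\lambda$. First, $X^n-1$ is itself a binomial divisible by $f$, so $r\le n$. Write $n=kr+s$ with $0\le s<r$. Since $X^r\equiv\lambda \pmod f$, we get
\[
X^n \equiv \lambda^k X^s \pmod f,
\]
and hence $f\mid \lambda^kX^s-1$. Note that $\lambda\neq 0$: otherwise $f\mid X^r$, which contradicts $f(0)\ne0$. So if $s>0$, the polynomial $X^s-\lambda^{-k}$ is a binomial of degree $s<r$ divisible by $f$, a contradiction. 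Therefore $s=0$, so $r\mid n$. Then $X^r-\lambda$ divides $X^{n}-\lambda^{k}$, and $f\mid X^n-1$ together with $f\mid X^n-\lambda^k$ gives $f\mid \lambda^k-1$. A constant divisible by a nonconstant polynomial must be zero, so $\lambda^k=1$, and therefore $X^r-\lambda\mid X^n-1$.

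For part (2), let $f$ be irreducible of degree $d$ with root $\zeta$, so that $\operatorname{ord}(\zeta)=n$ and the roots of $f$ are $\zeta^{q^i}$. For $\mu\in\mathbb F_q$, irreducibility gives $f\mid X^m-\mu$ iff $\zeta^m=\mu$; indeed, if $\zeta^m=\mu$, then every conjugate also satisfies $(\zeta^{q^i})^m=\mu^{q^i}=\mu$. Hence the binomial order is the least $m\ge1$ with $\zeta^m\in\mathbb F_q^*$. Since $\mathbb F_q^*$ is the set of elements whose order divides $q-1$, this condition is $\operatorname{ord}(\zeta^m)\mid q-1$. Using $\operatorname{ord}(\zeta^m)=n/\gcd(n,m)$, the condition becomes $n\mid m(q-1)$, i.e.\ $\frac{n}{\gcd(n,q-1)}\mid m$. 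The least such $m$ is $r=n/\gcd(n,q-1)$, and the minimal binomial multiple is $X^r-\zeta^r$. The polynomial is monic and lies in $\mathbb F_q[X]$, so uniqueness of the minimal multiple follows: two distinct such binomials of degree $r$ would have a nonzero constant difference divisible by $f$.

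For part (3), I would combine (1) and (2). By (2), $\lambda=\zeta^r$, so
\[
\operatorname{ord}(\lambda)=\frac{n}{\gcd(n,r)}=\frac{n}{r},
\]
where the last equality uses $r\mid n$ from (1). Hence $n=r\operatorname{ord}(\lambda)$.

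The only delicate point is in (1): checking that $\lambda\neq0$ so that $\lambda^{-k}$ makes sense, and justifying the step from $f\mid\lambda^k-1$ to $\lambda^k=1$ using that $f$ is nonconstant. Everything else is routine cyclic-group arithmetic.
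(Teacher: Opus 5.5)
Your proof is correct and complete. The paper gives no proof of this lemma: it is quoted as standard background on binomial (quasi-)order. So there is no route in the paper to compare against, and your argument supplies the missing justification.

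Part (1) is the natural division-with-remainder argument, and it correctly uses only $f(0)\neq 0$, not irreducibility. You flag two points needing care: $\lambda\neq 0$, and $f\mid\lambda^k-1$ forcing $\lambda^k=1$. The only other one is the implicit convention $r\geq 1$ in the definition of binomial order. That convention is what makes $X^s-\lambda^{-k}$ with $1\le s<r$ a legitimate competitor, since otherwise $X^0-1=0$ would trivially be divisible by $f$.

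In (2), you recast the binomial order as the least $m\ge 1$ with $\zeta^m\in\mathbb F_q^*$. This is the same viewpoint the paper later uses in Lemma~\ref{lem 9}, where the binomial order is the order of the coset $\theta\mathbb F_q^*$ in $\mathbb F_{q^2}^*/\mathbb F_q^*$. The step ``$f\mid X^m-\mu$ iff $\zeta^m=\mu$'' is most cleanly justified by noting that $f$ is the minimal polynomial of $\zeta$ over $\mathbb F_q$. Your conjugate argument also works, but it additionally needs that $f$ has distinct roots. That holds because irreducible polynomials over finite fields are separable.

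In (3) you do not actually need (1): $r=n/\gcd(n,q-1)$ visibly divides $n$, so $\operatorname{ord}(\zeta^r)=n/r$ directly.
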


\begin{definition}
	A polynomial $f(X) \in \mathbb{F}_{q}$ of degree $k \geq 1$ is said to be a primitive polynomial if it is the minimal polynomial of a primitive element of $\mathbb{F}_{q^{k}}$ over $\mathbb{F}_{q}$.
\end{definition}

The next two theorems are classical criteria on primitive polynomials, in terms of their order and binomial order respectively.

\begin{lemma}\label{lem 2}
	A polynomial $f(X) \in \mathbb{F}_{q}[X]$ of degree $k \geq 1$ is primitive if and only if $\mathrm{ord}(f) = q^{k}-1$.
\end{lemma}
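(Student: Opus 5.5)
We prove the two implications separately, using the definition of primitivity (minimal polynomial of a primitive element of $\mathbb{F}_{q^k}$) together with the standard fact recalled above: for an irreducible $f$, $\mathrm{ord}(f)$ equals the multiplicative order of any of its roots in $\mathbb{F}_q[X]/(f(X))\cong\mathbb{F}_{q^k}$.

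\emph{Primitive implies maximal order.} If $f$ is primitive of degree $k$, it is the minimal polynomial of a primitive element $\gamma\in\mathbb{F}_{q^k}$. In particular $f$ is irreducible and $\gamma$ is a root of it. Hence $\mathrm{ord}(f)=\mathrm{ord}(\gamma)=q^k-1$.

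\emph{Maximal order implies primitive.} Suppose $\mathrm{ord}(f)=q^k-1$. The key step is to show that $f$ is irreducible.

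First, $f(0)\neq 0$ by the standing convention, so $f$ is coprime to $X$. Since $f\mid X^{q^k-1}-1$ and this polynomial is separable (its derivative $-X^{q^k-2}$ is coprime to it), $f$ is square-free. Its roots are therefore distinct and lie in the set of $(q^k-1)$-th roots of unity, that is, in $\mathbb{F}_{q^k}^{*}$.

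Write $f=g_1\cdots g_s$ with distinct monic irreducible factors of degrees $d_i$, where $\sum d_i=k$. Each root of $g_i$ lies in $\mathbb{F}_{q^{d_i}}^{*}$, so $g_i\mid X^{q^{d_i}-1}-1$. Because $f$ is square-free, $\mathrm{ord}(f)$ is the lcm of the $\mathrm{ord}(g_i)$, and therefore
$$\mathrm{ord}(f)\mid \mathrm{lcm}_i\,(q^{d_i}-1).$$

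It remains to bound this lcm when $s\ge 2$ (or when $s=1$, $d_1=k$ trivially). The crude estimate
$$\mathrm{lcm}_i\,(q^{d_i}-1)\le \prod_i (q^{d_i}-1)<q^{\sum d_i}-1=q^k-1$$
works as soon as $s\ge 2$. The strict inequality holds because $(a-1)(b-1)<ab-1$ for $a,b\ge 2$, which can be iterated over the factors. This contradicts $\mathrm{ord}(f)=q^k-1$, so $s=1$ and $f$ is irreducible of degree $k$.

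Now let $\gamma$ be a root of $f$. It generates $\mathbb{F}_{q^k}$ and has order $\mathrm{ord}(f)=q^k-1$, so it is a primitive element of $\mathbb{F}_{q^k}$. Since $f$ is monic and irreducible with root $\gamma$, it is the minimal polynomial of $\gamma$, and hence $f$ is primitive.

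The only real obstacle is the irreducibility step above. The product bound is the tool that handles it.
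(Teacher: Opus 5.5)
The paper gives no proof of this lemma. It quotes it as one of the ``classical criteria'' (the standard result in Lidl--Niederreiter), so the natural comparison is with the textbook argument.

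Your proof is correct and follows that argument closely. The forward direction is exactly the fact $\mathrm{ord}(f)=\mathrm{ord}(\gamma)$ for irreducible $f$. For the converse, the textbook also reduces to proving irreducibility, and it disposes of the case of coprime factors with the same estimate $\mathrm{ord}(f)\le\prod_i(q^{d_i}-1)<q^k-1$.

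The one real difference is how repeated factors are excluded. The textbook treats $f=g^b$ with $b\ge 2$ separately: it uses $\mathrm{ord}(g^b)=\mathrm{ord}(g)\,p^t$ with $t\ge 1$, which is impossible because $p\nmid q^k-1$. You instead note that $f$ divides the separable polynomial $X^{q^k-1}-1$, so $f$ is square-free from the outset. Your route is slightly more self-contained, since it avoids the formula for the order of a power of an irreducible polynomial, and it handles all reducible cases in one step.

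You also rightly rely on the paper's standing convention that $f$ is monic with $f(0)\neq 0$. The statement as written needs this both for $\mathrm{ord}(f)$ to be defined and for $f$ to be \emph{the} minimal polynomial of $\gamma$.
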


\begin{lemma}\label{lem 4}
	Assume that $f(X) \in \mathbb{F}_{q}[X]$ is given by
	$$f(X) = X^{k}+a_{k-1}X^{k-1}+\cdots+a_{0},$$
	where $k \geq 1$. Then $f(X)$ is a primitive polynomial if and only if $(-1)^{k}a_{0}$ is a primitive element in $\mathbb{F}_{q}$ and $\mathrm{ord}^{\mathrm{b}}(f) = \frac{q^{k}-1}{q-1}$. If it is this case, the minimal binomial multiple of $f(X)$ is given by
	$$X^{\frac{q^{k}-1}{q-1}}-(-1)^{m}a_{0}.$$
\end{lemma}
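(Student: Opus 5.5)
We prove Lemma~\ref{lem 4} by reducing primitivity to a statement about the order of a root and then applying Lemma~\ref{lem 1}. Let $f$ have degree $k$ and let $\zeta$ be a root in a splitting field. Put $N=\frac{q^k-1}{q-1}$.

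For the forward direction, assume $f$ is primitive. Then $f$ is irreducible of degree $k$, so its roots are $\zeta,\zeta^q,\dots,\zeta^{q^{k-1}}$. Hence
\[
(-1)^k a_0=\prod_{i=0}^{k-1}\zeta^{q^i}=\zeta^{N}=\mathrm{N}_{\mathbb F_{q^k}/\mathbb F_q}(\zeta).
\]
By Lemma~\ref{lem 2}, $\zeta$ has order $q^k-1$. It follows that $\zeta^N$ has order $(q^k-1)/N=q-1$, so $(-1)^k a_0$ is primitive in $\mathbb F_q$. By Lemma~\ref{lem 1}(2) with $n=q^k-1$, the binomial order is $r=n/\gcd(n,q-1)=n/(q-1)=N$. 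The minimal binomial multiple is $X^N-\zeta^N=X^N-(-1)^k a_0$. The exponent written $(-1)^m$ in the statement should be read as $(-1)^k$.

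For the converse, assume $(-1)^k a_0$ is primitive and the binomial order is $N$. The main point to check is that $f$ is irreducible. I plan to argue that the hypothesis forces this, using the minimal binomial multiple $X^N-\lambda$ with $\lambda\in\mathbb F_q^*$.

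First, every root $\eta$ of $f$ satisfies $\eta^N=\lambda$. Taking the product over the $k$ roots (with multiplicity) gives
\[
\bigl((-1)^k a_0\bigr)^N=\lambda^k .
\]
Since $N\equiv k \pmod{q-1}$, the left side equals $\bigl((-1)^k a_0\bigr)^k$.

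Next, since $\lambda$ lies in $\mathbb F_q^*$, each root satisfies $\eta^{N(q-1)}=1$, so $\eta^{q^k-1}=1$. Thus every root lies in $\mathbb F_{q^k}^*$. Because $\gcd(X^N-\lambda,\,NX^{N-1})=1$ (as $p\nmid N$), the binomial $X^N-\lambda$ is separable, so $f$ is square-free.

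It remains to show $f$ has a root of order $q^k-1$, which is where I expect the difficulty. Let $\eta$ be a root lying in $\mathbb F_{q^d}$ with $d\mid k$. Then $\mathrm{ord}(\eta)$ divides $q^d-1$. For each irreducible factor $g$ of $f$, write $d_g=\deg g$ and $\eta_g$ for a root of $g$. Then
\[
\prod_g \mathrm N_{\mathbb F_{q^{d_g}}/\mathbb F_q}(\eta_g)=(-1)^k a_0 .
\]
This product is primitive, so some factor $g$ has a primitive norm.

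If $d_g<k$, then $\eta_g^N=\bigl(\eta_g^{(q^{d_g}-1)/(q-1)}\bigr)^{N/N_g}$ with $N_g=\frac{q^{d_g}-1}{q-1}$. I would try to exploit that $N/N_g$ is coprime-ish to $q-1$, so this combines with the minimality of $N$ to produce a contradiction. A cleaner route I would attempt first is the following: the binomial order $N$ equals the lcm of the binomial orders of the irreducible factors, by Lemma~\ref{lem 1} applied factorwise. Each irreducible factor of degree $d$ has binomial order dividing $\frac{q^{d}-1}{q-1}$ by Lemma~\ref{lem 1}(2). For $d<k$, that number divides $\frac{q^{d}-1}{q-1}$ and is much smaller than $N$. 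The lcm of such values over divisors $d$ of $k$ with $d<k$ is then not divisible by $N$, because of primitive prime divisors (Zsigmondy). That step needs care in the exceptional Zsigmondy cases. Once an irreducible factor of degree $k$ is found, $f$ equals it.

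With $f$ irreducible, Lemma~\ref{lem 1}(3) gives $\mathrm{ord}(f)=N\cdot\mathrm{ord}(\lambda)$. Here $\lambda=\zeta^N=(-1)^k a_0$ has order $q-1$, so $\mathrm{ord}(f)=q^k-1$. Lemma~\ref{lem 2} then shows $f$ is primitive.
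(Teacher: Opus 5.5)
The paper gives no proof of this lemma; it quotes it as a classical criterion (Theorem 3.18 in Lidl--Niederreiter), so your argument has to stand on its own. Your forward direction is correct. So is the last step of the converse: once $f$ is irreducible, $\lambda=\zeta^N=(-1)^k a_0$, Lemma~\ref{lem 1}(3) gives $\mathrm{ord}(f)=N(q-1)=q^k-1$, and Lemma~\ref{lem 2} applies. Reading $(-1)^m$ as $(-1)^k$ is also right.

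The gap is the irreducibility of $f$, which you never establish, and both routes you sketch rest on false claims.

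In the first route, a product of elements of the cyclic group $\mathbb F_q^*$ can be primitive with no factor primitive. In $\mathbb F_7^*$, $(-1)\cdot 2=5$ is primitive while the factors have orders $2$ and $3$. So you cannot conclude that some irreducible factor has primitive norm.

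In the second route, the binomial order of $f$ is \emph{not} the lcm of the binomial orders of its irreducible factors. The congruence $X^r\equiv\lambda\pmod f$ forces the constants attached to the different factors to coincide. Over $\mathbb F_5$, $X-1$ and $X-2$ each have binomial order $1$, but $(X-1)(X-2)$ has binomial order $\mathrm{ord}(2)=4$. One only gets that this lcm divides $N$, which is the wrong direction for bounding $N$ from above. The Zsigmondy step, with its exceptional cases, is also not carried out.

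The missing idea is a short counting argument. Put $R=\mathbb F_q[X]/(f)$ and $G=R^*/\mathbb F_q^*$. Since $f(0)\neq0$, the class $x$ of $X$ is a unit. The order of its image in $G$ is the least $n\ge1$ with $X^n\equiv\lambda\pmod f$ for some $\lambda\in\mathbb F_q^*$, i.e.\ $\mathrm{ord}_{\mathrm{b}}(f)=N$. By Lagrange, $N\le|G|=|R^*|/(q-1)\le(q^k-1)/(q-1)=N$. Hence $|R^*|=q^k-1$, every nonzero element of $R$ is a unit, $R$ is a field, and $f$ is irreducible. This uses only the binomial-order hypothesis; the primitivity of $(-1)^k a_0$ enters only afterwards, through $\lambda$.

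If you prefer to keep your Zsigmondy route, the correct input is $N=\mathrm{ord}_{\mathrm{b}}(f)\mid\mathrm{ord}(f)$ from Lemma~\ref{lem 1}(1). Combine this with $\mathrm{ord}(f)\mid\operatorname{lcm}_{d\mid k,\,d<k}(q^d-1)$, valid for a square-free reducible $f$ whose roots lie in $\mathbb F_{q^k}$. A primitive prime divisor of $q^k-1$ divides $N$ but not this lcm. The exceptions are immediate: for $k=2$, $q+1\nmid q-1$; for $(q,k)=(2,6)$, $63\nmid\operatorname{lcm}(1,3,7)=21$.
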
	
	
\section{Lucas polynomials, Lucas atomic decomposition, and Lucas sequences}\label{sec Luc}
Lucas polynomials were originally defined over $\mathbb{Q}$. Much in the same way that cyclotomic polynomials give the irreducible factorization of $X^{n}-1$ over $\mathbb{Q}$, the irreducible factorization of any Lucas polynomial is given by irreducible factors called Lucas atoms, and this factorization is accordingly referred to as the Lucas atomic decomposition of the Lucas polynomial. This result was first established in \cite{Sagan}. Since both Lucas polynomials and Lucas atoms have integer coefficients, their definitions and the Lucas atomic decompositions carry over to finite fields naturally.

Lucas polynomials over finite fields, along with their atomic decompositions, are the central tool for deriving the coefficients-based criterion for primitive quadratic polynomials, which is presented in Section \ref{sec: criterion}. In order to be self-contained and also to fix the notations, we will give the precise definitions of Lucas polynomials over finite fields, and prove their atomic decompositions in the first subsection. And in the second subsection we will introduce the Lucas sequences associated to an irreducible quadratic polynomial $f(X) \in \mathbb{F}_{q}[X]$, which relates the binomial order of $f(X)$ to Lucas polynomials.

\subsection{Lucas polynomials over finite fields and their atomic decomposition}
Let $\mathbb{F}_{q}$ be a finite field of $q$ elements, where $q$ is a power of a prime $p$, and let $B$ and $C$ be two algebraically independent indeterminates over $\mathbb{F}_{q}$. Denote by $\mathbf{k} = \mathbb{F}_{q}(B,C)$ the field of rational functions in $B$ and $C$. 

\begin{definition}
	We define two families $\{\mathcal{U}_{n}(B,C)\}_{n \geq 0}$ and $\{\mathcal{V}_{n}(B,C)\}_{n \geq 0}$ of polynomials over $\mathbb{F}_{q}$ inductively by:
	\begin{itemize}
		\item[(1)] $\mathcal{U}_{0}(B,C)=0$, $\mathcal{U}_{1}(B,C)=1$, and
		$$\mathcal{U}_{n}(B,C)= -B\,\mathcal{U}_{n-1}(B,C)-C\,\mathcal{U}_{n-2}(B,C)$$
		for $n \geq 2$;
		\item[(2)] $\mathcal{V}_{0}(B,C)=2$, $\mathcal{V}_{1}(B,C)=-B$, and
		$$\mathcal{V}_{n}(B,C)= -B\,\mathcal{V}_{n-1}(B,C)-C\,\mathcal{V}_{n-2}(B,C)$$
		for $n \geq 2$.
	\end{itemize}
	The polynomials $\mathcal{U}_{n}(B,C)$ and $\mathcal{V}_{n}(B,C)$ are called the $n$-th Lucas polynomials of the first type and of the second type, respectively.
\end{definition} 
	
Let $T$ be an indeterminate over $\mathbf{k}$, and $\alpha, \beta \in \overline{\mathbf{k}}$ be the roots of 
$$T^{2}+BT+C.$$
As $B$ and $C$ are algebraically independent, $B^{2}-4C \neq 0$ and hence the two roots $\alpha$ and $\beta$ are distinct. The polynomials $\mathcal{U}_{n}(B,C)$ and $\mathcal{V}_{n}(B,C)$ can be computed via the following Binet formulas.

\begin{lemma}\label{lem 3}
	The polynomials $\mathcal{U}_{n}(B,C)$ and $\mathcal{V}_{n}(B,C)$ can be written as
	$$\mathcal{U}_{n}(B,C) = \dfrac{\alpha^{n}-\beta^{n}}{\alpha-\beta}, \quad \mathcal{V}_{n}(B,C) = \alpha^{n}+\beta^{n}.$$
\end{lemma}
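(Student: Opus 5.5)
We prove both Binet formulas by induction on $n$, working in the splitting field $\mathbf{k}(\alpha,\beta)\subseteq\overline{\mathbf{k}}$ of $T^{2}+BT+C$. Here $\alpha\neq\beta$ (as noted above, since $B^{2}-4C\neq 0$), so the quotient $(\alpha^{n}-\beta^{n})/(\alpha-\beta)$ makes sense. Put
$$u_{n}=\frac{\alpha^{n}-\beta^{n}}{\alpha-\beta}, \qquad v_{n}=\alpha^{n}+\beta^{n}.$$
The plan is to show that $(u_{n})$ and $(v_{n})$ satisfy the same recurrence and the same initial values as $\mathcal{U}_{n}$ and $\mathcal{V}_{n}$. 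Since a second-order recurrence is determined by its first two terms, the sequences then agree for all $n\geq 0$.

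\emph{Initial values.} From Vieta's relations $\alpha+\beta=-B$ and $\alpha\beta=C$ we get
$$u_{0}=0,\quad u_{1}=1,\quad v_{0}=2,\quad v_{1}=\alpha+\beta=-B.$$
These match $\mathcal{U}_{0},\mathcal{U}_{1},\mathcal{V}_{0},\mathcal{V}_{1}$. In characteristic $2$ the identity $v_{0}=2=0$ is still consistent with $\mathcal{V}_{0}=2$, since both are read in $\mathbb{F}_{q}$.

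\emph{Recurrence.} Because $\alpha$ and $\beta$ are roots of $T^{2}+BT+C$,
$$\gamma^{n}=-B\gamma^{n-1}-C\gamma^{n-2}\qquad\text{for } \gamma\in\{\alpha,\beta\},\ n\geq 2.$$
Subtracting the two instances and dividing by $\alpha-\beta$ gives
$$u_{n}=-Bu_{n-1}-Cu_{n-2}.$$
Adding the two instances gives
$$v_{n}=-Bv_{n-1}-Cv_{n-2}.$$

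\emph{Induction.} Assume $u_{m}=\mathcal{U}_{m}$ and $v_{m}=\mathcal{V}_{m}$ for all $m<n$, with $n\geq 2$. The recurrences above and the defining recurrences of $\mathcal{U}_{n}$ and $\mathcal{V}_{n}$ then give $u_{n}=\mathcal{U}_{n}$ and $v_{n}=\mathcal{V}_{n}$. This proves the lemma.

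No step is a real obstacle. The only point needing care is that dividing by $\alpha-\beta$ is legitimate, and this is exactly the distinctness of the roots, which holds in every characteristic because $B$ and $C$ are algebraically independent.
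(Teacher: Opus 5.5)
Your proof is correct and follows the same route as the paper: check the initial values $n=0,1$ via Vieta's relations $\alpha+\beta=-B$, $\alpha\beta=C$, then derive the common recurrence from $\gamma^{2}=-B\gamma-C$ for $\gamma\in\{\alpha,\beta\}$ and conclude by induction. Your added remarks on dividing by $\alpha-\beta$ and on the characteristic-$2$ reading of $\mathcal{V}_{0}=2$ are accurate but not needed beyond what the paper already does.
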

	
\begin{proof}
	Since $\alpha$ and $\beta$ are the roots of $T^{2}+BT+C$, then 
	$$\alpha+\beta = -B, \quad \alpha\beta = C.$$
	For $i = 0$ and $i=1$, by definition one has
	$$\dfrac{\alpha^{0}-\beta^{0}}{\alpha-\beta^{q}} = 0 = \mathcal{U}_{0}(B,C), \quad \dfrac{\alpha-\beta}{\alpha-\beta} = 1 = \mathcal{U}_{1}(B,C),$$
	and
	$$\alpha^{0}+\beta^{0} = 2 = \mathcal{V}_{0}(B,C), \quad \alpha+\beta = -b = \mathcal{V}_{1}(B,C).$$
	Suppose that the Binet formulas holds for all integers $\leq n$. Since 
	$$\alpha^{2} = -B\alpha-C, \quad \beta^{2} = -B\beta-C,$$
	then 
	$$\alpha^{n+1}=-B\alpha^n-C\alpha^{n-1}, \quad \beta^{n+1}=-B\beta^n-C\beta^{n-1},$$
	which yields
	\[
	\begin{aligned}
		\frac{\alpha^{n+1}-\beta^{n+1}}
		{\alpha-\beta}&=
		\frac{
			\left(-B\alpha^n-C\alpha^{n-1}\right)
			-
			\left(-B\beta^n-C\beta^{n-1}\right)}
		{\alpha-\beta}\\
		&=
		-B\cdot
		\frac{\alpha^n-\beta^{n}}
		{\alpha-\beta}
		-C\cdot
		\frac{\alpha^{n-1}-\beta^{n-1}}
		{\alpha-\beta}\\
		&=
		-B\,\mathcal{U}_{n}(B,C)-C\,\mathcal{U}_{n-1}(B,C).\\
		&=\mathcal{U}_{n+1}(B,C)
	\end{aligned}
	\]
	and
	\[
	\begin{aligned}
		\alpha^{n+1}+\beta^{n+1}&=
		-B\alpha^{n}-C\alpha^{n-1}-B\beta^{n}-C\beta^{n-1}\\
		&=
		-B(\alpha^{n}+\beta^{n})-C(\alpha^{n-1}+\beta^{n-1})\\
		&=
		-B\,\mathcal{V}_{n}(B,C)-C\,\mathcal{V}_{n-1}(B,C)\\
		&=\mathcal{V}_{n+1}(B,C).
	\end{aligned}
	\]
\end{proof}
	
\begin{corollary}\label{coro 1}
	For any $n \geq 1$, one has
	$$\mathcal{U}_{2n}(B,C) = \mathcal{U}_{n}(B,C)\mathcal{V}_{n}(B,C), \quad \mathcal{V}_{2n}(B,C) = \mathcal{V}_{n}(B,C)^{2} - 2C^{n}.$$
\end{corollary}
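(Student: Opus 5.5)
The plan is to derive both identities directly from the Binet formulas of Lemma~\ref{lem 3}, working in $\overline{\mathbf{k}}$, where $\alpha$ and $\beta$ are the distinct roots of $T^{2}+BT+C$. We use $\alpha\beta = C$ and $\alpha-\beta\neq 0$ (the latter was noted before Lemma~\ref{lem 3}). Because the polynomial ring $\mathbb{F}_{q}[B,C]$ embeds in $\overline{\mathbf{k}}$, an identity between these polynomials that holds in $\overline{\mathbf{k}}$ holds in $\mathbb{F}_{q}[B,C]$ as well.

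For the first identity, factor the numerator as a difference of squares:
$$\mathcal{U}_{2n}(B,C)=\frac{\alpha^{2n}-\beta^{2n}}{\alpha-\beta}=\frac{(\alpha^{n}-\beta^{n})(\alpha^{n}+\beta^{n})}{\alpha-\beta}=\mathcal{U}_{n}(B,C)\,\mathcal{V}_{n}(B,C).$$

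For the second identity, expand the square:
$$\mathcal{V}_{n}(B,C)^{2}=(\alpha^{n}+\beta^{n})^{2}=\alpha^{2n}+\beta^{2n}+2(\alpha\beta)^{n}=\mathcal{V}_{2n}(B,C)+2C^{n}.$$
Rearranging gives $\mathcal{V}_{2n}(B,C)=\mathcal{V}_{n}(B,C)^{2}-2C^{n}$.

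Neither step poses any real difficulty. The only point that needs care is characteristic $2$. There the term $2C^{n}$ vanishes and $\mathcal{V}_{0}=2=0$, but neither fact affects the argument: each step is a formal identity in $\overline{\mathbf{k}}$. The Binet formula for $\mathcal{U}_{n}$ requires $\alpha\neq\beta$, which holds in every characteristic because $B$ and $C$ are algebraically independent.
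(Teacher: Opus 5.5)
Your proposal is correct and follows essentially the same route as the paper: it factors $\alpha^{2n}-\beta^{2n}$ as a difference of squares for the first identity, and expands $(\alpha^{n}+\beta^{n})^{2}$ with $\alpha\beta=C$ for the second, both via the Binet formulas of Lemma~\ref{lem 3}. Your remarks on the embedding $\mathbb{F}_{q}[B,C]\subset\overline{\mathbf{k}}$ and on characteristic $2$ (where $\alpha-\beta=-B\neq 0$) are sound and make explicit what the paper leaves implicit.
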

	
\begin{proof}
	Following from Lemma \ref{lem 3}, one has
	\[
	\begin{aligned}
		\mathcal{U}_{2n}(B,C) & = \dfrac{\alpha^{2n}-\beta^{2n}}{\alpha-\beta}\\
		& = \dfrac{(\alpha^{n}-\beta^{n})(\alpha^{n}+\beta^{n})}{\alpha-\beta}\\
		& = \mathcal{U}_{n}(B,C)\mathcal{V}_{n}(B,C),
	\end{aligned}
	\]
	and
	\[
	\begin{aligned}
		\mathcal{V}_{2n}(B,C) & = \alpha^{2n}+\beta^{2n}\\
		& = (\alpha^{n}+\beta^{n})^{2} - 2\alpha^{n}\beta^{n}\\
		& = \mathcal{V}_{n}(B,C)^{2} - 2C^{n}.
	\end{aligned}
	\]
\end{proof}
	
Let $\Phi_{n}(X)$ be the $n$-th cyclotomic polynomial over $\mathbb{F}_{q}$. The homogeneous $n$-th cyclotomic polynomial, in the indeterminates $\alpha$ and $\beta$, is defined by
$$\Phi_{n}^{\mathrm{hom}}(\alpha,\beta) = \beta^{\varphi(n)}\Phi_{n}(\frac{\alpha}{\beta}),$$
where $\varphi$ is the Euler's totient function. 

\begin{proposition}\label{prop 2}
	For any $n \geq 2$, there exists a unique polynomial $\Lambda_{n}(B,C) \in \mathbb{F}_{q}[B,C]$ such that
	$$\Lambda_{n}(B,C) = \Phi_{n}^{\mathrm{hom}}(\alpha,\beta).$$
\end{proposition}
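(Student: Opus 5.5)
Uniqueness is immediate: $\Lambda_n(B,C)$ is required to equal a specific element of $\overline{\mathbf{k}}$, and $\mathbb{F}_q[B,C]\subset\mathbf{k}\subset\overline{\mathbf{k}}$ injects, so at most one polynomial can have this image. The work is in existence, i.e.\ in showing that $\Phi_n^{\mathrm{hom}}(\alpha,\beta)$ lies in $\mathbb{F}_q[B,C]$. The plan is to show, in order, that this element is symmetric in $\alpha,\beta$, that it is a polynomial with $\mathbb{F}_q$-coefficients in $\alpha,\beta$, and hence that it is a polynomial in the elementary symmetric functions $\alpha+\beta=-B$ and $\alpha\beta=C$.

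\textbf{Polynomiality.} Since $\Phi_n(X)\in\mathbb{F}_q[X]$ has degree $\varphi(n)$ (it is the reduction of the integral cyclotomic polynomial), $\Phi_n^{\mathrm{hom}}(\alpha,\beta)=\beta^{\varphi(n)}\Phi_n(\alpha/\beta)$ is a homogeneous polynomial of degree $\varphi(n)$ in $\alpha,\beta$ with coefficients in $\mathbb{F}_q$. Indeed, if $\Phi_n(X)=\sum_i c_iX^i$, then $\Phi_n^{\mathrm{hom}}(\alpha,\beta)=\sum_i c_i\alpha^i\beta^{\varphi(n)-i}$.

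\textbf{Symmetry.} We need $\Phi_n^{\mathrm{hom}}(\alpha,\beta)=\Phi_n^{\mathrm{hom}}(\beta,\alpha)$, which is the statement that $\Phi_n$ is self-reciprocal:
$$X^{\varphi(n)}\Phi_n(1/X)=\Phi_n(X)\qquad (n\ge 2).$$
I would prove this over $\mathbb{Z}$ and then reduce mod $p$. Over $\mathbb{C}$, the roots of $\Phi_n$ are the primitive $n$-th roots of unity, a set closed under inversion. Their product is $\Phi_n(0)(-1)^{\varphi(n)}$; since $\varphi(n)$ is even for $n\ge3$ and $\Phi_n(0)=1$ for $n\ge2$, the leading-coefficient normalisation works out. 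For $n=2$ one checks directly that $\Phi_2=X+1$. Thus $\Phi_n^{\mathrm{hom}}$, viewed in $\mathbb{Z}[\alpha,\beta]$, is a symmetric homogeneous polynomial.

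\textbf{Conclusion.} By the fundamental theorem of symmetric polynomials over $\mathbb{Z}$, there is $G\in\mathbb{Z}[s,t]$ with $\Phi_n^{\mathrm{hom}}(\alpha,\beta)=G(\alpha+\beta,\alpha\beta)$. Setting $\Lambda_n(B,C)=G(-B,C)$, reduced mod $p$, gives the desired element of $\mathbb{F}_q[B,C]$, in fact with prime-field coefficients.

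It is cleaner to do everything over $\mathbb{Z}$ with $\alpha,\beta$ as formal roots and specialise at the end, because the Binet identity is compatible with reduction. One can alternatively exhibit $\Lambda_n$ inductively from the decomposition
$$\mathcal{U}_n=\prod_{d\mid n,\;d\ge2}\Lambda_d,$$
using Lemma~\ref{lem 3} and $X^n-1=\prod_{d\mid n}\Phi_d$, but that requires a divisibility argument in $\mathbb{F}_q[B,C]$, which the symmetric-function route avoids.

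The only mildly delicate step is the self-reciprocity of $\Phi_n$, including the sign check for $n=2$ and the evenness of $\varphi(n)$ for $n\ge3$. Everything else is formal.
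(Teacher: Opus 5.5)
Your proof is correct. Its core coincides with the paper's: self-reciprocity of $\Phi_n$ for $n\ge 2$ makes $\Phi_n^{\mathrm{hom}}(\alpha,\beta)$ symmetric, and the fundamental theorem of symmetric polynomials then writes it in terms of $\alpha+\beta=-B$ and $\alpha\beta=C$.

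The difference is in how you treat indices divisible by $p$. The paper stays in characteristic $p$ throughout. For $p\nmid n$ it works with the primitive $n$-th roots of unity in $\overline{\mathbb{F}}_q$. For $n=p^a n'$ it uses $\Phi_n=\Phi_{n'}^{\varphi(p^a)}$ over $\mathbb{F}_q$ to reduce to $n'$.

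You instead prove symmetry for the integral cyclotomic polynomial, apply the fundamental theorem over $\mathbb{Z}$, and reduce modulo $p$ only at the end. This handles every $n\ge 2$ uniformly and shows that $\Lambda_n$ is the reduction of the integral Lucas atom. In particular, your argument covers $n=p^a$ with no extra work. The paper's reduction in that case would pass through $n'=1$, i.e.\ through $\Phi_1^{\mathrm{hom}}(\alpha,\beta)=\alpha-\beta$, which is antisymmetric rather than symmetric. That case therefore needs a separate observation: $(\alpha-\beta)^{\varphi(p^a)}$ equals $(B^2-4C)^{\varphi(p^a)/2}$ for odd $p$, and $B^{\varphi(2^a)}$ for $p=2$.

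What the paper's route provides directly is the relation $\Lambda_n=\Lambda_{n'}^{\varphi(p^a)}$ for $n'\ge 2$, which it uses later for the Frobenius multiplicities. From your construction this follows at once from $\Phi_n\equiv\Phi_{n'}^{\varphi(p^a)} \pmod p$ together with your uniqueness argument.

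A small simplification in your self-reciprocity step: closure of the roots under inversion gives $X^{\varphi(n)}\Phi_n(1/X)=\Phi_n(0)\,\Phi_n(X)$ for every $n$. So once you know $\Phi_n(0)=1$ for $n\ge 2$ (evaluate $X^n-1=\prod_{d\mid n}\Phi_d(X)$ at $0$ and induct), you need neither the parity of $\varphi(n)$ nor a separate check of $n=2$.
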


\begin{proof}
	First assume that $n$ is not divisible by $p$. Then $\Phi_{n}(X)$ can be written as
	$$\Phi_{n}(X) = \prod_{\substack{1 \leq k \leq n\\ \mathrm{gcd}(k,n)=1}}(X-\zeta_{n}^{k}),$$
	where $\zeta_{n}$ is a primitive $n$-th root of unity lying in $\overline{\mathbb{F}}_{q}$, therefore 
	$$\Phi_{n}^{\mathrm{hom}}(\alpha,\beta) = \beta^{\varphi(n)}\Phi_{n}(\frac{\alpha}{\beta}) = \prod_{\substack{1 \leq k \leq n\\ \mathrm{gcd}(k,n)=1}}(\alpha-\zeta_{n}^{k}\beta).$$
	Since for $n\geq 2$, the cyclotomic polynomial $\Phi_n(X)$ is self-reciprocal, that is,
	\[
	X^{\varphi(n)}\Phi_n(X^{-1})=\Phi_n(X),
	\] 
	exchanging $\alpha$ and $\beta$ gives
	\[
	\begin{aligned}
		\Phi_n^{\mathrm{hom}}(\beta,\alpha)
		&=
		\alpha^{\varphi(n)}
		\Phi_n\left(\frac{\beta}{\alpha}\right)\\
		&=
		\beta^{\varphi(n)}
		\left(\frac{\alpha}{\beta}\right)^{\varphi(n)}
		\Phi_n\left(\frac{\beta}{\alpha}\right)\\
		&=
		\beta^{\varphi(n)}
		\Phi_n\left(\frac{\alpha}{\beta}\right)\\
		&=
		\Phi_n^{\mathrm{hom}}(\alpha,\beta).
	\end{aligned}
	\]
	Hence $\Phi_{n}^{\mathrm{hom}}(\alpha,\beta)$ is a symmetric polynomial in $\alpha$ and $\beta$. As $\alpha+\beta = -B$ and $\alpha\beta = C$, by the fundamental theorem of symmetric polynomials, there exists a unique polynomial $\Lambda_{n}(B,C) \in \mathbb{F}_{q}[B,C]$ such that
	$$\Lambda_{n}(B,C) = \Lambda_{n}(-(\alpha+\beta),\alpha\beta) = \Phi_{n}^{\mathrm{hom}}(\alpha,\beta).$$
	
	If $p \mid n$, writing $n = p^{a}n^{\prime}$ where $a = v_{p}(n)$ and $\mathrm{gcd}(n^{\prime},p)=1$, then 
	$$\Phi_{n}(X) = \dfrac{\Phi_{n^{\prime}}(X^{p^{a}})}{\Phi_{n^{\prime}}(X^{p^{a-1}})} = \Phi_{n^{\prime}}(X)^{p^{a}-p^{a-1}} = \Phi_{n^{\prime}}(X)^{\varphi(p^a)}.$$
	Homogenizing this identity yields
	$$\Phi_{n}^{\mathrm{hom}}(\alpha,\beta) = \Phi_{n^{\prime}}^{\mathrm{hom}}(\alpha,\beta)^{\varphi(p^a)}.$$
	Following from the argument in the last paragraph, the polynomial
	$$\Lambda_{n}(B,C) = \Lambda_{n^{\prime}}(B,C)^{\varphi(p^{a})} \in \mathbb{F}_{q}[B,C]$$
	satisfies
	$$\Lambda_{n}(B,C) = \Phi_{n}^{\mathrm{hom}}(\alpha,\beta).$$
	And the uniqueness is guaranteed by the fundamental theorem of symmetric polynomials.
\end{proof}
	
\begin{definition}
	For $n=1$ set $\Lambda_{1}(B,C)=1$, and for $n \geq 2$ define $\Lambda_{n}(B,C)$ to be the unique polynomial defined by Proposition \ref{prop 2}. The polynomial $\Lambda_{n}(B,C)$ is called the $n$-th Lucas atom.
\end{definition}
	
If $n$ is divisible by $p$, we write $n=p^{a}n^{\prime}$ where $a = v_{p}(n)$ and $\mathrm{gcd}(p,n^{\prime})=1$. The Lucas polynomials $\mathcal{U}_{n}(B,C)$, $\mathcal{V}_{n}(B,C)$, and the Lucas atom $\Lambda_{n}(B,C)$ can be obtained from $\mathcal{U}_{n^{\prime}}(B,C)$, $\mathcal{V}_{n^{\prime}}(B,C)$ and $\Lambda_{n^{\prime}}(B,C)$, respectively.

\begin{lemma}
	Let $n=p^{a}n^{\prime}$ with $a = v_{p}(n)$ and $\mathrm{gcd}(p,n^{\prime})=1$. Then we have
	\begin{enumerate}
		\item[(1)] $\Lambda_{n}(B,C) = \Lambda_{n^{\prime}}(B,C)^{\varphi(p^a)}$;
		\item[(2)] $\mathcal{V}_{n}(B,C) = \mathcal{V}_{n^{\prime}}(B,C)^{p^a}$; and
		\item[(3)] $\mathcal{U}_{n}(B,C) = 
		\begin{cases}
			(B^{2}-4C)^{\frac{p^a -1}{2}}\,\mathcal{U}_{n^{\prime}}(B,C)^{p^a}, \ \mathrm{if} \ p \ \mathrm{is} \ \mathrm{odd};\\
			B^{2^a -1}\,\mathcal{U}_{n^{\prime}}(B,C)^{2^a}, \ \mathrm{if} \ p=2.
		\end{cases}$
	\end{enumerate}
\end{lemma}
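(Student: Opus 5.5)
All three identities will be proved through the Binet formulas of Lemma \ref{lem 3} together with the Frobenius identity $(x+y)^{p^a}=x^{p^a}+y^{p^a}$ in characteristic $p$. Throughout, $\alpha,\beta$ are the distinct roots of $T^2+BT+C$, so $\alpha-\beta\neq 0$ and $(\alpha-\beta)^2=(\alpha+\beta)^2-4\alpha\beta=B^2-4C$. Since both sides of each identity lie in $\mathbb{F}_q[B,C]\subset\overline{\mathbf{k}}$, it suffices to verify the equalities in $\overline{\mathbf{k}}$.

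Part (1) is already contained in the proof of Proposition \ref{prop 2}. There the identity $\Phi_n(X)=\Phi_{n'}(X)^{\varphi(p^a)}$ in $\mathbb{F}_q[X]$ was homogenized. Both sides have the same degree, because $\varphi(n)=\varphi(p^a)\varphi(n')$. Homogenizing gives $\Phi_n^{\mathrm{hom}}(\alpha,\beta)=\Phi_{n'}^{\mathrm{hom}}(\alpha,\beta)^{\varphi(p^a)}$. By the uniqueness in Proposition \ref{prop 2}, this yields $\Lambda_n=\Lambda_{n'}^{\varphi(p^a)}$. For completeness I will re-derive $\Phi_n=\Phi_{n'}^{\varphi(p^a)}$ from the standard identity $\Phi_{p^a n'}(X)=\Phi_{n'}(X^{p^a})/\Phi_{n'}(X^{p^{a-1}})$ and the relation $\Phi_{n'}(X^{p^b})=\Phi_{n'}(X)^{p^b}$ in characteristic $p$, which holds because the coefficients lie in $\mathbb{F}_p$.

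For part (2), write $\mathcal V_n=\alpha^{p^a n'}+\beta^{p^a n'}=(\alpha^{n'})^{p^a}+(\beta^{n'})^{p^a}=(\alpha^{n'}+\beta^{n'})^{p^a}=\mathcal V_{n'}^{p^a}$.

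For part (3), the same step gives $\alpha^n-\beta^n=(\alpha^{n'}-\beta^{n'})^{p^a}$. Hence
$$\mathcal U_n=\frac{(\alpha^{n'}-\beta^{n'})^{p^a}}{\alpha-\beta}=(\alpha-\beta)^{p^a-1}\,\mathcal U_{n'}^{p^a}.$$
It then remains to express $(\alpha-\beta)^{p^a-1}$ as a polynomial in $B,C$.
\begin{itemize}
\item[(i)] If $p$ is odd, then $p^a-1$ is even, so $(\alpha-\beta)^{p^a-1}=\bigl((\alpha-\beta)^2\bigr)^{(p^a-1)/2}=(B^2-4C)^{(p^a-1)/2}$.
\item[(ii)] If $p=2$, then $-\beta=\beta$, so $\alpha-\beta=\alpha+\beta=-B=B$. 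This gives $(\alpha-\beta)^{2^a-1}=B^{2^a-1}$.
\end{itemize}

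There is no serious obstacle in this argument. The only points needing care are that the Binet formulas of Lemma \ref{lem 3} hold in characteristic $p$, since they rely only on $\alpha\neq\beta$, and, in the case $p=2$, the use of $-1=1$ to identify $\alpha-\beta$ with $B$.
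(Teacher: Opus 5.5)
Your treatment of (2) and (3) is the same Binet-plus-Frobenius computation the paper uses, and it is correct for every $n$, including $n'=1$. The gap is in (1), and the paper's proof has it too. The step ``by the uniqueness in Proposition~\ref{prop 2}, this yields $\Lambda_n=\Lambda_{n'}^{\varphi(p^a)}$'' needs $\Lambda_{n'}=\Phi_{n'}^{\mathrm{hom}}(\alpha,\beta)$, and that identity only holds for $n'\ge 2$.

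When $n'=1$, that is $n=p^a$ with $a\ge 1$, the paper's convention is $\Lambda_1=1$. But $\Phi_1^{\mathrm{hom}}(\alpha,\beta)=\alpha-\beta$. This is not even symmetric for odd $p$, because $\Phi_1$ is not self-reciprocal, which is why the argument in Proposition~\ref{prop 2} does not cover this case. Your homogenized identity, which is itself correct, therefore gives $\Lambda_{p^a}=(\alpha-\beta)^{\varphi(p^a)}$. That equals $(B^2-4C)^{\varphi(p^a)/2}$ for odd $p$ and $B^{2^{a-1}}$ for $p=2$, not $1=\Lambda_1^{\varphi(p^a)}$.

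Two concrete counterexamples: in characteristic $3$, $\Lambda_3=\mathcal{U}_3=B^2-C\neq 1$; in characteristic $2$, $\Lambda_2=\mathcal{U}_2=-B\neq 1$. Your own part (3) together with Theorem~\ref{thm 1} forces this as well. If (1) held for $n'=1$, then $\mathcal{U}_{p^a}=\prod_{k=1}^{a}\Lambda_{p^k}$ would be $1$, whereas $\mathcal{U}_{p^a}=(\alpha-\beta)^{p^a-1}$ is nonconstant.

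So statement (1) is false as written when $n'=1$, and no argument can prove it there. You should restrict (1) to $n'\ge 2$ (the case $a=0$ is trivial) and record separately that $\Lambda_{p^a}=(\alpha-\beta)^{\varphi(p^a)}$. With that restriction your argument is complete and coincides with the paper's.
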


\begin{proof}
	The first conclusion has already been obtained by the proof of Proposition \ref{prop 2}. By the Binet formulas,
	$$\mathcal{V}_{n}(B,C) = \alpha^{n}+\beta^{n} = (\alpha^{n^{\prime}}+\beta^{n^{\prime}})^{p^a} = \mathcal{V}_{n^{\prime}}(B,C)^{p^a},$$
	and
	$$\mathcal{U}_{n}(B,C) = \dfrac{\alpha^{n}-\beta^{n}}{\alpha-\beta} = (\dfrac{\alpha^{n^{\prime}}-\beta^{n^{\prime}}}{\alpha-\beta})^{p^a}(\alpha-\beta)^{p^a -1} = (\alpha-\beta)^{p^a -1}\mathcal{U}_{n^{\prime}}(B,C)^{p^a}.$$
	Note that if $p$ is odd then
	$$(\alpha-\beta)^{p^a -1} = ((\alpha-\beta)^{2})^{\frac{p^a -1}{2}} = (B^2 -4C)^{\frac{p^a -1}{2}};$$
	while if $p=2$ then
	$$(\alpha-\beta)^{2^a -1} = (\alpha+\beta)^{2^a -1} = B^{2^a -1}.$$
	Hence we have 
	$$\mathcal{U}_{n}(B,C) = 
	\begin{cases}
		(B^{2}-4C)^{\frac{p^a -1}{2}}\,\mathcal{U}_{n^{\prime}}(B,C)^{p^a}, \ \mathrm{if} \ p \ \mathrm{is} \ \mathrm{odd};\\
		B^{2^a -1}\,\mathcal{U}_{n^{\prime}}(B,C)^{2^a}, \ \mathrm{if} \ p=2.
	\end{cases}$$
\end{proof}

Now we are ready to give the Lucas atomic decompositions of the Lucas polynomials $\mathcal{U}_{n}(B,C)$ and $\mathcal{V}_{n}(B,C)$.

\begin{theorem}\label{thm 1}
	For any $n \geq 1$, we have
	\begin{equation}\label{eq 1}
		\mathcal{U}_{n}(B,C) = \prod_{\substack{d\mid n\\d>1}}\Lambda_{d}(B,C)
	\end{equation}
	and
	\begin{equation}\label{eq 2}
		\mathcal{V}_{n}(B,C) = \prod_{\substack{d\mid 2n\\d\nmid n}}\Lambda_{d}(B,C).
	\end{equation}
    Here for $n=1$ the product $\prod\limits_{\substack{d\mid n\\d>1}}\Lambda_{d}(B,C)$ is set to be $1$.
\end{theorem}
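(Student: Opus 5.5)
The plan is to pass to the homogeneous cyclotomic picture via the Binet formulas of Lemma \ref{lem 3} and reduce both identities to the classical factorization of $X^{n}-1$ over $\mathbb{F}_{q}$. The first step is to establish the cyclotomic identity
\[
X^{n}-1=\prod_{d\mid n}\Phi_{d}(X)
\]
in $\mathbb{F}_{q}[X]$ for all $n\ge 1$, including when $p\mid n$. Here $\Phi_{d}$ denotes the reduction of the integer cyclotomic polynomial, so the identity holds over $\mathbb{Z}$ and hence after reduction modulo $p$. This is consistent with the relation $\Phi_{p^{a}n'}=\Phi_{n'}^{\varphi(p^{a})}$ used in Proposition \ref{prop 2}. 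Homogenizing, with $\beta^{n}=\prod_{d\mid n}\beta^{\varphi(d)}$ because $\sum_{d\mid n}\varphi(d)=n$, gives
\[
\alpha^{n}-\beta^{n}=\prod_{d\mid n}\Phi_{d}^{\mathrm{hom}}(\alpha,\beta).
\]
The factor for $d=1$ is $\Phi_{1}^{\mathrm{hom}}(\alpha,\beta)=\alpha-\beta$. Dividing by it (legitimate since $\alpha\neq\beta$) and using Proposition \ref{prop 2}, we get
\[
\mathcal{U}_{n}(B,C)=\prod_{d\mid n,\,d>1}\Phi_{d}^{\mathrm{hom}}(\alpha,\beta)=\prod_{d\mid n,\,d>1}\Lambda_{d}(B,C).
\]
For $n=1$ this product is empty and equals $1=\mathcal{U}_{1}$. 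Since $B,C$ are algebraically independent, equality in $\overline{\mathbf{k}}$ of two elements of $\mathbb{F}_{q}[B,C]$ is equality of polynomials, which proves \eqref{eq 1}.

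For \eqref{eq 2}, I would use $\mathcal{U}_{2n}=\mathcal{U}_{n}\mathcal{V}_{n}$ from Corollary \ref{coro 1}. Applying \eqref{eq 1} to both $2n$ and $n$ gives
\[
\mathcal{U}_{n}\mathcal{V}_{n}=\prod_{d\mid 2n,\,d>1}\Lambda_{d}=\mathcal{U}_{n}\cdot\prod_{d\mid 2n,\,d\nmid n}\Lambda_{d}.
\]
The second equality holds because the divisors $d>1$ of $2n$ split into those dividing $n$ and those not dividing $n$, and every $d\nmid n$ automatically satisfies $d>1$. Since $\mathcal{U}_{n}$ is a nonzero element of the integral domain $\mathbb{F}_{q}[B,C]$, we can cancel it to obtain \eqref{eq 2}. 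Its nonvanishing follows from $\mathcal{U}_{n}=(\alpha^{n}-\beta^{n})/(\alpha-\beta)$ together with $\alpha^{n}\neq\beta^{n}$. That inequality holds because $\alpha/\beta$ is transcendental over $\mathbb{F}_{q}$ (otherwise $B^{2}/C=(\alpha+\beta)^{2}/(\alpha\beta)$ would be algebraic), so $\alpha/\beta$ is not a root of unity.

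The main obstacle is the characteristic-$p$ bookkeeping when $p\mid n$ and, especially, when $p=2$. In that case $2n$ and $n$ have different $2$-adic valuations and the atoms carry multiplicities $\varphi(p^{a})$. The cleanest way to avoid trouble is to make sure $\Phi_{d}$ is read throughout as the reduction of the integer cyclotomic polynomial, as in the definition of $\Lambda_{d}$. Then $X^{n}-1=\prod_{d\mid n}\Phi_{d}$ holds verbatim modulo $p$, and no separate case analysis is needed. If one insists on defining $\Phi_{d}$ intrinsically over $\mathbb{F}_{q}$, I would instead verify the factorization for $n=p^{a}n'$ directly. Starting from $X^{n}-1=(X^{n'}-1)^{p^{a}}=\prod_{e\mid n'}\Phi_{e}^{p^{a}}$, the check reduces to the identity $\sum_{i=0}^{a}\varphi(p^{i})=p^{a}$, matching the exponent of $\Phi_{e}$ over the divisors $p^{i}e$ of $n$.
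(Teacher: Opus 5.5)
Your proof is correct and follows the paper's route. You homogenize $X^{n}-1=\prod_{d\mid n}\Phi_{d}(X)$ and divide by $\alpha-\beta$ to get the identity for $\mathcal{U}_{n}$, then obtain the identity for $\mathcal{V}_{n}$ from $\mathcal{U}_{2n}=\mathcal{U}_{n}\mathcal{V}_{n}$; along the way you are slightly more careful than the paper, since you write the $d=1$ factor as $\Phi_{1}^{\mathrm{hom}}(\alpha,\beta)=\alpha-\beta$ (the paper's displayed chain conflates it with $\Lambda_{1}=1$) and you check that $\mathcal{U}_{n}\neq 0$ before cancelling it.
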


\begin{proof}
	For any $n \geq 1$, as
	$$X^{n}-1 = \prod_{d\mid n}\Phi_{d}(X),$$
	then
	\begin{align*}
		\alpha^{n}-\beta^{n} &= \beta^{n}((\dfrac{\alpha}{\beta})^{n}-1)\\
		&= \beta^{n}\prod_{d\mid n}\Phi_{d}(\dfrac{\alpha}{\beta})\\
		&=\prod_{d\mid n}\Phi_{d}^{\mathrm{hom}}(\alpha,\beta)\\
		&=\prod_{d\mid n}\Lambda_{d}(B,C).
	\end{align*}
	Therefore by Lemma \ref{lem 3} and Corollary \ref{coro 1} we have
	$$\mathcal{U}_{n}(B,C) = \dfrac{\alpha^{n}-\beta^{n}}{\alpha-\beta} = \prod_{\substack{d\mid n\\d > 1}}\Lambda_{d}(B,C),$$
	and
	$$\mathcal{V}_{n}(B,C) = \dfrac{\mathcal{U}_{2n}(B,C)}{\mathcal{U}_{n}(B,C)} = \prod_{\substack{d\mid 2n\\d \nmid n}}\Lambda_{d}(B,C).$$
\end{proof}

\begin{remark}
	The factorizations \eqref{eq 1} and \eqref{eq 2} are called the Lucas atomic decomposition of $\mathcal{U}_{n}(B,C)$ and of $\mathcal{V}_{n}(B,C)$ respectively. The polynomials $\mathcal{U}_{n}(B,C)$, $\mathcal{V}_{n}(B,C)$ and $\Lambda_{n}(B,C)$ were first defined over $\mathbb{Q}$, in which case the Lucas atomic decompositions \eqref{eq 1} and \eqref{eq 2} are exactly the irreducible factorizations of $\mathcal{U}_{n}(B,C)$ and $\mathcal{V}_{n}(B,C)$ respectively. However, over finite fields they are not necessarily irreducible factorizations. This is quite analogous to the situation for the cyclotomic decomposition of $X^{n}-1$ over $\mathbb{Q}$ and over finite fields.
\end{remark}
	
By Theorem \ref{thm 1} we obtain the following inductive formula to compute Lucas atoms.

\begin{corollary}\label{coro 4}
	For any $n \geq 2$,
	$$\Lambda_{n}(B,C) = \dfrac{\mathcal{U}_{n}(B,C)}{\prod\limits_{\substack{d\mid n\\1 < d < n}}\Lambda_{d}(B,C)}.$$
\end{corollary}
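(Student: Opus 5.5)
The formula in Corollary \ref{coro 4} follows directly from the atomic decomposition \eqref{eq 1} of Theorem \ref{thm 1}. I would take that identity for a fixed $n\geq 2$ and split off the factor with $d=n$:
$$\mathcal{U}_{n}(B,C)=\Lambda_{n}(B,C)\cdot\prod_{\substack{d\mid n\\1<d<n}}\Lambda_{d}(B,C).$$
The corollary then amounts to dividing both sides by the product $D_n(B,C):=\prod_{d\mid n,\,1<d<n}\Lambda_d(B,C)$. When $n$ is prime this product is empty, so $D_n=1$ and the statement reduces to $\Lambda_n=\mathcal{U}_n$.

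The only point to check is that the division is legitimate, i.e.\ that $D_n$ is a nonzero element of the integral domain $\mathbb{F}_q[B,C]$. I would show that each $\Lambda_d(B,C)$ with $d\geq 2$ is nonzero. By Proposition \ref{prop 2}, $\Lambda_d(B,C)=\Phi_d^{\mathrm{hom}}(\alpha,\beta)$. Up to the power in Proposition \ref{prop 2} when $p\mid d$, this is a product of factors $\alpha-\zeta\beta$ with $\zeta$ a root of unity in $\overline{\mathbb{F}}_q$.

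Such a factor could vanish only if $\alpha/\beta$ were a root of unity. That would make $\alpha/\beta$ algebraic over $\mathbb{F}_q$. Then $(\alpha+\beta)^2/(\alpha\beta)=B^2/C$ would also be algebraic over $\mathbb{F}_q$, since it equals $(\alpha/\beta)+2+(\beta/\alpha)$. This contradicts the algebraic independence of $B$ and $C$.

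It follows that $D_n\neq 0$. Since $\mathbb{F}_q[B,C]\subset\mathbf{k}$ is a domain, cancelling $D_n$ in the displayed identity gives the claimed quotient formula. The quotient is exact, with no remainder, because the identity itself shows $D_n$ divides $\mathcal{U}_n$. The formula is recursive in the sense that $\Lambda_n$ is determined by $\mathcal{U}_n$ and the atoms $\Lambda_d$ for proper divisors $d$ of $n$, starting from $\Lambda_p=\mathcal{U}_p$ for primes $p$. I expect no real obstacle here; the nonvanishing of the atoms is the only step needing an argument.
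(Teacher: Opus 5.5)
Your proposal is correct and is essentially the paper's argument. The paper reads the formula directly off the atomic decomposition \eqref{eq 1} of Theorem~\ref{thm 1} by separating the factor $d=n$, exactly as you do. Your extra check that $\prod_{d\mid n,\,1<d<n}\Lambda_d(B,C)\neq 0$, via the transcendence of $B^2/C$, is a valid detail the paper leaves implicit. It also follows at once from $\mathcal{U}_n(B,C)\neq 0$ in the domain $\mathbb{F}_q[B,C]$.
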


For any $F(B,C) \in \mathbb{F}_{q}[B,C]$, we call its degree in the indeterminate $B$ over $\mathbb{F}_{q}[C]$ the $B$-degree of $F(B,C)$ and denote it by $\mathrm{deg}_{B}(F)$. We conclude this subsection by determining the leading terms of the Lucas polynomials and of the Lucas atoms with respect to the indeterminate $B$. In particular, their $B$-degrees are obtained.

\begin{proposition}
	\label{lem:B-degree-leading-coefficient}
	\begin{itemize}
		\item[(1)] For every $n\geq 1$, $\mathcal{U}_{n}(B,C)$ and $\mathcal{V}_{n}(B,C)$ can be expressed in the forms
		\[
		\mathcal{U}_n(B,C)
		=
		(-1)^{n-1}B^{n-1}
		+\text{terms of lower $B$-degree},
		\]
		and
		\[
		\mathcal{V}_n(B,C)
		=
		(-1)^nB^n
		+\text{terms of lower $B$-degree}.
		\]
		In particular, we have 
		\[
		\deg_B(\mathcal{U}_n)=n-1,
		\qquad
		\deg_B(\mathcal{V}_n)=n.
		\]
		\item[(2)] For every $n\geq 2$, $\Lambda_{n}(B,C)$ can be expressed in the forms
		\[
		\Lambda_n(B,C)
		=
		(-1)^{\varphi(n)}B^{\varphi(n)}
		+\text{terms of lower $B$-degree}.
		\]
		In particular, we have
		\[
		\deg_B(\Lambda_n)=\varphi(n).
		\]
	\end{itemize}
\end{proposition}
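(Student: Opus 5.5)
Part (1) goes by strong induction on $n$, using the recurrences directly. For $\mathcal U_n$ the base cases are $\mathcal U_1=1=(-1)^0B^0$ and $\mathcal U_2=-B$. For $\mathcal V_n$ they are $\mathcal V_1=-B$ and $\mathcal V_2=B^2-2C$, the latter from the recurrence with $\mathcal V_0=2$. Assume the claim for $n-1$ and $n-2$. Then $-B\,\mathcal U_{n-1}$ has leading term $(-1)(-1)^{n-2}B\cdot B^{n-2}=(-1)^{n-1}B^{n-1}$. The term $-C\,\mathcal U_{n-2}$ has $B$-degree at most $n-3<n-1$, because multiplying by $C$ does not raise the $B$-degree. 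The same computation gives the leading term $(-1)^nB^n$ for $\mathcal V_n$. Since $\pm1$ is nonzero in every characteristic, the leading coefficients never vanish, so the $B$-degrees are exactly $n-1$ and $n$.

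For part (2), I will treat $\mathbb F_q[C][B]$ as a polynomial ring over the integral domain $\mathbb F_q[C]$. In this ring the leading coefficient of a product is the product of the leading coefficients, and degrees add. I proceed by strong induction on $n\ge2$, using the decomposition \eqref{eq 1} of Theorem \ref{thm 1}:
\[
\mathcal U_n=\Lambda_n\cdot\prod_{\substack{d\mid n\\1<d<n}}\Lambda_d .
\]
By induction, each $\Lambda_d$ with $1<d<n$ has leading term $(-1)^{\varphi(d)}B^{\varphi(d)}$. So the product over proper divisors has leading term $(-1)^{N}B^{N}$ with $N=\sum_{d\mid n,\,1<d<n}\varphi(d)=n-1-\varphi(n)$, using $\sum_{d\mid n}\varphi(d)=n$. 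Call the leading term of $\Lambda_n$ $\ell(C)B^m$. Comparing leading terms with part (1) gives
\[
\ell(C)\,(-1)^{N}B^{m+N}=(-1)^{n-1}B^{n-1}.
\]
Hence $m=\varphi(n)$ and $\ell(C)=(-1)^{\varphi(n)}$. The base case $n=2$ is the instance where the product over proper divisors is empty, so there $\Lambda_2=\mathcal U_2=-B$.

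The only point needing care is the convention $\Lambda_1=1$ in the divisor sum. Theorem \ref{thm 1} excludes $d=1$, while $\sum_{d\mid n}\varphi(d)=n$ includes $\varphi(1)=1$; this is what gives $N=n-1-\varphi(n)$.

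An alternative check uses the definition $\Lambda_n=\prod(\alpha-\zeta^k\beta)$ for $p\nmid n$, but it is harder to make rigorous. The inductive argument via Theorem \ref{thm 1} is cleaner and covers the case $p\mid n$ at the same time, so I will use it.
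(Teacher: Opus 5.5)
Your proposal is correct and follows essentially the same route as the paper. Part (1) is induction on the three-term recurrence. Part (2) is strong induction via the atomic decomposition $\mathcal{U}_n=\prod_{d\mid n,\,d>1}\Lambda_d$, using multiplicativity of $B$-degrees and $B$-leading coefficients over the domain $\mathbb{F}_q[C]$ together with $\sum_{d\mid n}\varphi(d)=n$.
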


\begin{proof}
	For the conclusion $(1)$, we prove 
	\begin{equation}\label{eq 7}
		\mathcal{U}_n(B,C)
		=
		(-1)^{n-1}B^{n-1}
		+\text{terms of lower $B$-degree},
	\end{equation}
	and the proof of the latter identity for $\mathcal{V}_{n}(B,C)$ is similar. The identity \eqref{eq 7} holds clearly for $n=1$ and $n=2$. Suppose that $n\geq 3$ and that \eqref{eq 7} holds for
	$n-2$ and $n-1$. By the recurrence relation,
	\[
	\mathcal{U}_n(B,C)
	=
	-B\,\mathcal{U}_{n-1}(B,C)-C\,\mathcal{U}_{n-2}(B,C).
	\]
	Since
	\[
	-B\,\mathcal{U}_{n-1}(B,C)
	=
	(-1)^{n-1}B^{n-1}
	+\text{terms of lower $B$-degree},
	\]
	and
	\[
	\deg_B\bigl(C\,\mathcal{U}_{n-2}(B,C)\bigr)=n-3.
	\]
	Then $\mathcal{U}_{n}(B,C)$ can be written as
	\[
	\mathcal{U}_n(B,C)
	=
	(-1)^{n-1}B^{n-1}
	+\text{terms of lower $B$-degree}.
	\]
	In particular,
	\[
	\deg_B(\mathcal{U}_n)=n-1.
	\]
	
	Next we prove $(2)$ by induction on $n\geq 2$.
	By the Lucas atomic decomposition,
	\[
	\mathcal{U}_n(B,C)
	=
	\prod_{\substack{d\mid n\\d>1}}
	\Lambda_d(B,C).
	\]
	Since $\mathbb F_q[C][B]$ is an integral domain, both the
	$B$-degree and the $B$-leading coefficient are multiplicative
	under products. Therefore,
	\begin{equation}\label{eq 8}
		n-1
		=
		\deg_B(\mathcal{U}_n)
		=
		\sum_{\substack{d\mid n\\d>1}}
		\deg_B(\Lambda_d),
	\end{equation}
	and 
	\begin{equation}\label{eq 9}
		(-1)^{n-1}
		=
		\prod_{\substack{d\mid n\\d>1}}
		\operatorname{lc}_B(\Lambda_d),
	\end{equation}
	where $\operatorname{lc}_B(\Lambda_d)$ denotes the coefficient of the leading term of $\Lambda_{d}$ with respect to the indeterminate $B$.
	
	For $n=2$, since
	\[
	\mathcal{U}_2(B,C)=-B=\Lambda_2(B,C),
	\]
	we have
	\[
	\deg_B(\Lambda_2)=1=\varphi(2)
	\]
	and
	\[
	\operatorname{lc}_B(\Lambda_2)
	=
	-1
	=
	(-1)^{\varphi(2)}.
	\]
	
	Now suppose that $n>2$ and that, for every proper divisor
	$d>1$ of $n$,
	\[
	\deg_B(\Lambda_d)=\varphi(d)
	\]
	and
	\[
	\operatorname{lc}_B(\Lambda_d)
	=
	(-1)^{\varphi(d)}.
	\]
	By \eqref{eq 8} we obtain
	\[
	\begin{aligned}
		\deg_B(\Lambda_n)
		&=
		n-1
		-
		\sum_{\substack{d\mid n\\1<d<n}}
		\deg_B(\Lambda_d) \\
		&=
		n-1
		-
		\sum_{\substack{d\mid n\\1<d<n}}
		\varphi(d).
	\end{aligned}
	\]
	As $\sum\limits_{d\mid n}\varphi(d)=n$, then
	\[
	\sum_{\substack{d\mid n\\1<d<n}}
	\varphi(d)
	=
	n-1-\varphi(n),
	\]
	and consequently
	\[
	\deg_B(\Lambda_n)=\varphi(n).
	\]
	Moreover, the identity \eqref{eq 9} gives
	\[
	\begin{aligned}
		\operatorname{lc}_B(\Lambda_n)
		&=
		\frac{(-1)^{n-1}}
		{\displaystyle
			\prod_{\substack{d\mid n\\1<d<n}}
			(-1)^{\varphi(d)}} \\
		&=
		(-1)^{
			n-1-
			\sum\varphi(d)
		} \\
		&=
		(-1)^{\varphi(n)},
	\end{aligned}
	\]
	which implies
	\[
	\Lambda_n(B,C)
	=
	(-1)^{\varphi(n)}B^{\varphi(n)}
	+\text{terms of lower $B$-degree}.
	\]
\end{proof}

\subsection{The Lucas sequences associated to an irreducible quadratic polynomials}
\begin{definition}
	Let $f(X) = X^{2}+bX+c$ be an irreducible quadratic polynomial over $\mathbb{F}_{q}$. Associated to $f(X)$, we define two linear recurrence sequences $\{U_{n}\}_{n \geq 0}$ and $\{V_{n}\}_{n \geq 0}$ lying in $\mathbb{F}_{q}$ by
	\begin{itemize}
		\item[(1)] $U_{0}=0$, $U_{1}=1$, and
		$$U_{n} = -bU_{n-1}-cU_{n-2}$$
		for $n \geq 2$.
		\item[(2)] $V_{0}=2$, $V_{1}=-b$, and
		$$V_{n} = -bV_{n-1}-cV_{n-2}$$
		for $n \geq 2$.
	\end{itemize}
	The sequences $\{U_{n}\}_{n \geq 0}$ and $\{V_{n}\}_{n \geq 0}$ are called the Lucas sequences of the first type and of the second type, respectively, associated to $f(X)$.
\end{definition}

By definition for any $n \geq 0$ the terms $U_{n}$ and $V_{n}$ can be realized as the evaluations 
\begin{equation}\label{eq 3}
	U_{n} = \mathcal{U}_{n}(b,c), \quad V_{n} = \mathcal{V}_{n}(B,C)
\end{equation}
of $\mathcal{U}_{n}(B,C)$ and of $\mathcal{V}_{n}(B,C)$, respectively, at the point $(b,c)$. Following from Lemma \ref{lem 3} the terms $U_{n}$ and $V_{n}$ can also be obtained by the Binet formulas

\begin{lemma}\label{lem 5}
	Let $\theta$ be a root of $f(X)$. Then for every $n \geq 0$, 
	\begin{equation}\label{eq 4}
		U_{n} = \dfrac{\theta^{n}-\theta^{nq}}{\theta-\theta^{q}}, \quad V_{n} = \theta^{n}+\theta^{nq}.
	\end{equation}
\end{lemma}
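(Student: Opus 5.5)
The plan is to realise the sequences $U_n, V_n$ as specialisations of the generic Binet formulas of Lemma \ref{lem 3}, with the generic roots $\alpha,\beta$ replaced by $\theta$ and its conjugate. Since $f(X)=X^{2}+bX+c$ is irreducible over $\mathbb{F}_{q}$, its two roots in $\mathbb{F}_{q^{2}}$ are $\theta$ and $\theta^{q}$, the Frobenius conjugate of $\theta$. They are distinct, because an irreducible polynomial over a finite field is separable: if $\theta=\theta^{q}$ then $\theta\in\mathbb{F}_{q}$, which contradicts irreducibility. Hence $\theta-\theta^{q}\neq 0$, and the right-hand side of \eqref{eq 4} is well defined. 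Vieta's formulas give
$$\theta+\theta^{q}=-b, \qquad \theta\cdot\theta^{q}=c.$$

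I would not pass through the polynomial identity of Lemma \ref{lem 3}, since specialising $\alpha,\beta$ requires care with the ring in which $\alpha,\beta$ live. Instead I would repeat its induction directly in $\mathbb{F}_{q^{2}}$. Put
$$U_{n}'=\dfrac{\theta^{n}-\theta^{nq}}{\theta-\theta^{q}}, \qquad V_{n}'=\theta^{n}+\theta^{nq}.$$

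First, check the initial values:
$$U_{0}'=0,\quad U_{1}'=1,\quad V_{0}'=2,\quad V_{1}'=\theta+\theta^{q}=-b.$$

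Next, use $\theta^{2}=-b\theta-c$ and $(\theta^{q})^{2}=-b\theta^{q}-c$. These give
$$\theta^{n+1}=-b\theta^{n}-c\theta^{n-1}, \qquad \theta^{(n+1)q}=-b\theta^{nq}-c\theta^{(n-1)q}.$$
Taking the difference (divided by $\theta-\theta^{q}$) and the sum shows that $U_{n}'$ and $V_{n}'$ satisfy the same recurrence $x_{n+1}=-bx_{n}-cx_{n-1}$ as $U_{n}$ and $V_{n}$.

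A second-order linear recurrence is determined by its first two terms. Strong induction on $n$ therefore gives $U_{n}=U_{n}'$ and $V_{n}=V_{n}'$ for all $n\geq 0$. This also shows the values lie in $\mathbb{F}_{q}$, consistent with the definition.

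Alternatively, one could invoke \eqref{eq 3} together with Lemma \ref{lem 3}, via the ring homomorphism $\mathbb{F}_{q}[\alpha,\beta]\to\mathbb{F}_{q^{2}}$ sending $\alpha\mapsto\theta$ and $\beta\mapsto\theta^{q}$; this is legitimate because $\mathbb{F}_{q}[\alpha,\beta]$ is a polynomial ring, $\alpha,\beta$ being algebraically independent. The map sends $B=-(\alpha+\beta)\mapsto b$ and $C=\alpha\beta\mapsto c$. For $\mathcal{U}_{n}$, one uses the polynomial identity $(\alpha-\beta)\,\mathcal{U}_{n}=\alpha^{n}-\beta^{n}$ and then divides by $\theta-\theta^{q}\neq0$.

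There is no real obstacle. The only point needing care is the nonvanishing of $\theta-\theta^{q}$, which is where irreducibility (hence separability) of $f$ is used, including in characteristic $2$.
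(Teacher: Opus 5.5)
Your proof is correct. The paper's proof is the one-line route you list as your alternative: it notes $\theta\neq\theta^{q}$ and evaluates the generic Binet identities of Lemma \ref{lem 3} at $(\alpha,\beta)=(\theta,\theta^{q})$.

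Your main argument instead reruns the recurrence induction of Lemma \ref{lem 3} directly in $\mathbb{F}_{q^{2}}$. You check the initial values and use that both $\theta$ and $\theta^{q}$ satisfy $x^{2}=-bx-c$. This repeats a few lines but is fully self-contained.

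The paper's route is shorter, but it leaves implicit why an identity stated in $\overline{\mathbf{k}}$ may be evaluated at $(\theta,\theta^{q})$. Your alternative supplies exactly this justification. Since $\alpha,\beta$ are algebraically independent, $\mathbb{F}_{q}[\alpha,\beta]\to\mathbb{F}_{q^{2}}$ is a well-defined homomorphism with $B\mapsto b$ and $C\mapsto c$. You also apply it to the denominator-free identity $(\alpha-\beta)\,\mathcal{U}_{n}=\alpha^{n}-\beta^{n}$ before dividing by $\theta-\theta^{q}\neq 0$.

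Either version proves the lemma. The direct induction is the more elementary one, and your alternative is the rigorous form of what the paper does.
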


\begin{proof}
	Since $f(X)$ is irreducible over $\mathbb{F}_{q}$, its two roots are $\theta$ and $\theta^{q}$, which are distinct. Therefore the quotient $\frac{\theta^{n}-\theta^{nq}}{\theta-\theta}$ is well-defined. Evaluation the identities
	$$\mathcal{U}_{n}(B,C) = \dfrac{\alpha^{n}-\beta^{n}}{\alpha-\beta}, \quad \mathcal{V}_{n}(B,C) = \alpha^{n}+\beta^{n}$$
	at $(\alpha,\beta) = (\theta,\theta^{q})$ yields
	$$U_{n} = \dfrac{\theta^{n}-\theta^{nq}}{\theta-\theta^{q}}, \quad V_{n} = \theta^{n}+\theta^{nq}.$$
\end{proof}

\begin{corollary}\label{coro 2}
	For any $n \geq 1$, we have
	$$U_{2n} = U_{n}V_{n}, \quad V_{2n} = V_{n}^{2}-2c^{n}.$$
\end{corollary}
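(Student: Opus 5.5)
The plan is to specialize the polynomial identities of Corollary~\ref{coro 1} at the point $(B,C)=(b,c)$, rather than repeat the Binet computation. By \eqref{eq 3}, each sequence term is such an evaluation: $U_{n}=\mathcal{U}_{n}(b,c)$ and $V_{n}=\mathcal{V}_{n}(b,c)$. The second identity in \eqref{eq 3} should of course read $\mathcal{V}_{n}(b,c)$.

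Corollary~\ref{coro 1} gives identities in $\mathbb{F}_{q}[B,C]$:
\[
\mathcal{U}_{2n}(B,C)=\mathcal{U}_{n}(B,C)\mathcal{V}_{n}(B,C), \qquad \mathcal{V}_{2n}(B,C)=\mathcal{V}_{n}(B,C)^{2}-2C^{n}.
\]
The evaluation map $\mathbb{F}_{q}[B,C]\to\mathbb{F}_{q}$, $F\mapsto F(b,c)$, is a ring homomorphism. It therefore preserves products, squares, differences, and the constant $2$, and it sends $C^{n}$ to $c^{n}$. Applying it to both identities gives $U_{2n}=U_{n}V_{n}$ and $V_{2n}=V_{n}^{2}-2c^{n}$ immediately.

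As a cross-check, one can use Lemma~\ref{lem 5} directly. Write $\theta^{2n}-\theta^{2nq}=(\theta^{n}-\theta^{nq})(\theta^{n}+\theta^{nq})$ and divide by $\theta-\theta^{q}\neq 0$; this is allowed because $f$ is irreducible, so its roots are distinct. For the second identity, expand $(\theta^{n}+\theta^{nq})^{2}$ and use $\theta\cdot\theta^{q}=c$, which gives $\theta^{n}\theta^{nq}=c^{n}$.

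There is no real obstacle here. The only point needing care is that the polynomial identities of Corollary~\ref{coro 1} hold in $\mathbb{F}_{q}[B,C]$ itself, not merely in $\overline{\mathbf{k}}$, which is what licenses specializing them. This is automatic, since $\mathbb{F}_{q}[B,C]\subset\overline{\mathbf{k}}$ and both sides already lie in $\mathbb{F}_{q}[B,C]$.
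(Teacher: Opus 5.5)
Your proposal is correct and matches the paper's proof, which also just specializes the identities of Corollary~\ref{coro 1}. The paper phrases this as evaluating at $(\alpha,\beta)=(\theta,\theta^{q})$; your version, applying the evaluation homomorphism $F\mapsto F(b,c)$ on $\mathbb{F}_{q}[B,C]$, is the same argument stated more cleanly, and your correction of the typo in \eqref{eq 3} is right.
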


\begin{proof}
	The conclusions follow from evaluating the identities in Corollary \ref{coro 1} at point $(\alpha,\beta) = (\theta,\theta^{q})$.
\end{proof}

A reason for introducing these two sequence is that $\{U_{n}\}_{n \geq 0}$ gives rise to the reminders of $X^{n}$ modulo $f(X)$ for all $n \geq 1$, which is stated explicitly as follows.

\begin{proposition}
	For every \(n\ge1\),
	\begin{equation*}
		X^n\equiv U_n X-cU_{n-1}\pmod{f(X)}.
	\end{equation*}
\end{proposition}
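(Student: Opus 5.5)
The plan is to argue by induction on $n$, using only the defining recurrence $U_n=-bU_{n-1}-cU_{n-2}$ and the relation $X^2\equiv -bX-c \pmod{f(X)}$. We need the convention $U_0=0$, which is part of the definition of the Lucas sequence of the first type.

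For the base case $n=1$ the right-hand side is $U_1X-cU_0=X$, so the congruence holds trivially. It is also convenient to check $n=2$ directly. There the right-hand side is $U_2X-cU_1=-bX-c$, and this is exactly the remainder of $X^2$ modulo $f(X)=X^2+bX+c$.

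For the inductive step, suppose $X^{n}\equiv U_nX-cU_{n-1}$ for some $n\ge1$. Multiplying by $X$ gives
\[
X^{n+1}\equiv U_nX^2-cU_{n-1}X\equiv U_n(-bX-c)-cU_{n-1}X=(-bU_n-cU_{n-1})X-cU_n \pmod{f(X)}.
\]
By the recurrence, $-bU_n-cU_{n-1}=U_{n+1}$. Hence $X^{n+1}\equiv U_{n+1}X-cU_n$, which is the statement for $n+1$.

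There is no real obstacle; the only point needing care is the index bookkeeping at the start, which is handled by $U_0=0$. Alternatively, one could verify the identity by evaluating $U_nX-cU_{n-1}$ at the two roots $\theta$ and $\theta^q$ using Lemma~\ref{lem 5}. This uses $c=\theta^{1+q}$ and the fact that a polynomial of degree at most $1$ is determined by its values at two distinct points. The induction, however, is shorter and does not require irreducibility of $f$.
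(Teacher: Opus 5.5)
Your proof is correct and is essentially the paper's own argument. The paper also inducts on $n$ from the trivial case $n=1$, multiplies by $X$, reduces $X^2\equiv -bX-c$, and applies the recurrence $U_{n+1}=-bU_n-cU_{n-1}$ exactly as you do.
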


\begin{proof}
	The formula is clear for \(n=1\).  If it holds for \(n\), then
	\[
	\begin{aligned}
		X^{n+1}
		&\equiv U_nX^2-cU_{n-1}X\\
		&\equiv U_n(-bX-c)-cU_{n-1}X\\
		&=(-bU_n-cU_{n-1})X-cU_n\\
		&=U_{n+1}X-cU_n\pmod{f(X)},
	\end{aligned}
	\]
	which proves the result by induction.
\end{proof}

\begin{corollary}\label{coro 3}
	Assume that $r \geq 1$ is the smallest positive integer such that $U_{r}=0$. Then the binomial order of $f(X)$ is $r$ and the minimal binomial multiple of $f(X)$ is given by
	$$X^{r} + cU_{r-1}.$$
\end{corollary}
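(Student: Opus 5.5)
The plan is to combine the preceding Proposition, $X^n\equiv U_nX-cU_{n-1}\pmod{f(X)}$, with the fact that $U_n=0$ exactly when $X^n$ is congruent to a constant modulo $f(X)$. Both directions of the identification of the binomial order then follow from the minimality of $r$.

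First, take any $n\ge 1$ and any $\lambda\in\mathbb F_q$ with $f(X)\mid X^n-\lambda$. Then $U_nX-cU_{n-1}-\lambda\equiv 0\pmod{f(X)}$. The left-hand side has degree at most $1<2=\deg f$, so it must be the zero polynomial. Hence $U_n=0$ and $\lambda=-cU_{n-1}$.

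Conversely, if $U_n=0$, the Proposition gives $X^n\equiv -cU_{n-1}\pmod{f(X)}$, i.e.\ $f(X)\mid X^n+cU_{n-1}$. So the exponents $n$ for which $f(X)$ divides some binomial $X^n-\lambda$ are exactly the $n\ge1$ with $U_n=0$. For each such $n$, the constant is forced to be $\lambda=-cU_{n-1}$.

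Next, I check that $r$ exists and that the binomial is legitimate, which is the only real subtlety. Since $f$ is irreducible with nonzero constant term, $f(X)\mid X^{\mathrm{ord}(f)}-1$. By the first step, $U_{\mathrm{ord}(f)}=0$, so the set of zeros of $(U_n)_{n\ge1}$ is nonempty and $r$ is well defined. The minimal binomial multiple in Definition \ref{def 1} should have nonzero constant term, so I need $cU_{r-1}\neq 0$. Here $c\neq0$ because $f$ is irreducible of degree $2$. Also $U_{r-1}\neq0$: if $r=1$ then $U_0=0$ gives $X\equiv 0 \pmod{f(X)}$, which is impossible; in fact $U_1=1\ne0$ forces $r\ge2$. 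If $r\ge2$, then $U_{r-1}\ne0$ by minimality of $r$. Alternatively, one can note that $f(X)\mid X^r$ is impossible since $\gcd(f,X)=1$.

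Finally, the minimal degree of a binomial divisible by $f(X)$ is the smallest $n\ge1$ with $U_n=0$, which is $r$. The constant is uniquely determined as $-cU_{r-1}$, so the minimal binomial multiple is $X^r+cU_{r-1}$ and $\mathrm{ord}_{\mathrm b}(f)=r$.
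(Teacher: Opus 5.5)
Your proof is correct and takes the same route as the paper. The paper states this corollary without proof, as an immediate consequence of the remainder formula $X^n\equiv U_nX-cU_{n-1}\pmod{f(X)}$. Your degree argument supplies the omitted step: a remainder of degree at most $1$ is a constant $\lambda$ exactly when $U_n=0$ and $\lambda=-cU_{n-1}$. Your extra checks that $r$ exists and that $cU_{r-1}\neq 0$ are harmless additions.
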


\begin{remark}
	By the identities in \eqref{eq 3}, the binomial order of $f(X)$ can be identified as the smallest positive integer $r$ such that $(b,c)$ is a root of the Lucas polynomial $\mathcal{U}_{n}(B,C)$.
\end{remark}

\section{Preliminary results}
This section collects some preliminary results needed for developing the criterion for primitive quadratic polynomials over $\mathbb{F}_{q}$. In the first subsection, fixing the constant term, we count the irreducible quadratic polynomials and the primitive quadratic polynomials over $\mathbb{F}_{q}$ respectively. And in the second subsection, for an irreducible polynomial $f(X) = X^{2}+bX+c \in \mathbb{F}_{q}[X]$, where $c$ is a primitive element of $\mathbb{F}_{q}$, we analyze the possible values of its binomial order.

\subsection{Enumerations of irreducible and of primitive quadratic polynomials with a fixed constant term}
In this section, unless stating otherwise, $c$ is assumed to be a primitive element of $\mathbb{F}_{q}$. For such a $c$ and an element $b$ of $\mathbb{F}_{q}$, we denote by $f_{b,c}(X)$ the polynomial
$$f_{b,c}(X) = X^{2}+bX+c.$$
Fixing $c$, define
$$N_{\mathrm{irr}}(q,c) = \#\{b\in\mathbb{F}_{q} \ | \ f_{b,c}\text{ is irreducible}\}$$
and
$$N_{\mathrm{prim}}(q,c) = \#\{b\in\mathbb{F}_{q} \ | \ f_{b,c}\text{ is primitive}\}.$$
We compute $N_{\mathrm{irr}}(q,c)$ and $N_{\mathrm{prim}}(q,c)$ respectively in the following two lemmas.
	
\begin{lemma}\label{lem 6}
	Given a primitive element $c \in \mathbb{F}_{q}$, we have
	$$
	N_{\mathrm{irr}}(q,c)=
	\begin{cases}
		\dfrac{q}{2},& \text{if} \ q\ \text{is even};\\[2mm]
		\dfrac{q+1}{2},& \text{if} \ q\ \text{is odd}.
	\end{cases}
	$$
	\end{lemma}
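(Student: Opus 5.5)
Fix a primitive $c\in\mathbb F_q$. Since $c$ is not a square when $q$ is odd, and every element is a square when $q$ is even, I split into these two cases. In both cases the plan is to count the \emph{reducible} polynomials $f_{b,c}$ and subtract from $q$. A reducible monic quadratic with constant term $c$ factors as $(X-u)(X-v)$ with $uv=c$, $u,v\in\mathbb F_q^{*}$, and then $b=-(u+v)$. So the reducible $f_{b,c}$ correspond bijectively to unordered pairs $\{u,v\}$ (possibly $u=v$) with $uv=c$. Indeed, the unordered pair of roots determines $f_{b,c}$, and conversely $f_{b,c}$ determines its roots.

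\textbf{Odd $q$.} The ordered pairs $(u,c/u)$ with $u\in\mathbb F_q^{*}$ number $q-1$. A pair is fixed by swapping exactly when $u^2=c$, which is impossible because a primitive element of $\mathbb F_q^{*}$ with $q$ odd is a non-square (it has even order $q-1$, so it is not in the index-$2$ subgroup of squares). Hence there are $(q-1)/2$ unordered pairs, i.e.\ $(q-1)/2$ reducible polynomials, and
\[
N_{\mathrm{irr}}(q,c)=q-\tfrac{q-1}{2}=\tfrac{q+1}{2}.
\]

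\textbf{Even $q$.} Squaring is a bijection of $\mathbb F_q$, so there is exactly one $u$ with $u^2=c$. This gives one fixed ordered pair, and the remaining $q-2$ ordered pairs form $(q-2)/2$ swapped couples. Thus there are $1+(q-2)/2=q/2$ unordered pairs, and
\[
N_{\mathrm{irr}}(q,c)=q-\tfrac q2=\tfrac q2.
\]

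No step is a real obstacle. The only points needing care are that the map from unordered root pairs to $b$ is injective, which holds because a monic polynomial determines its multiset of roots, and the square/non-square status of $c$, which is where primitivity of $c$ is used.
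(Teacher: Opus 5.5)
Your proof is correct, but it counts the complement, whereas the paper counts the irreducible polynomials directly.

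\textbf{Your route.} You count the reducible $f_{b,c}$ by pairing $u\in\mathbb F_q^{*}$ with $c/u$. Everything stays inside $\mathbb F_q$, and the only input is whether $c$ is a square in $\mathbb F_q$.

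\textbf{The paper's route.} It identifies irreducible $f_{b,c}$ with Frobenius orbits $\{\theta,\theta^q\}\subseteq\mathbb F_{q^2}\setminus\mathbb F_q$ of norm $c$. Surjectivity of $\mathrm{N}_{q^2/q}$, whose kernel has order $q+1$, gives $\#S_c=q+1$. It then removes the elements of $S_c$ lying in $\mathbb F_q$: there are none when $q$ is odd, and exactly the square root of $c$ when $q$ is even. The remaining elements are divided into orbits of size two.

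\textbf{Comparison.} Your argument is more elementary, since it needs no extension field and no norm map. It also makes visible that the count only requires $c$ to be a non-square when $q$ is odd and $c\neq 0$ when $q$ is even. The paper's approach, in exchange, sets up the set $S_c$ of norm-$c$ elements. That set is reused in Lemma~\ref{lem 7}, where writing $S_c=\{\gamma^{1+k(q-1)}\}$ counts the primitive polynomials. A complement count over $\mathbb F_q$ cannot supply this, because primitivity is a property of the roots in $\mathbb F_{q^2}$.
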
	
	
\begin{proof}
	For any $b \in \mathbb{F}_{q}$, the polynomial $f_{b,c}(X)$ is irreducible if and only if there exists $\theta \in \mathbb{F}_{q^2} \setminus \mathbb{F}_{q}$ such that
	$$f_{b,c}(X) = (X-\theta)(X-\theta^{q}),$$
	which amounts to that $b = -\theta - \theta^{q} = -\mathrm{Tr}_{q^{2}/q}(\theta)$ and $c = \theta^{q+1} = \mathrm{N}_{q^{2}/q}(\theta)$. Thus $N_{\mathrm{irr}}(q,c)$ is equal to the number of Frobenius orbits $\{\theta,\theta^{q}\} \subseteq \mathbb{F}_{q^2} \setminus \mathbb{F}_{q}$ satisfying $\mathrm{N}_{q^{2}/q}(\theta) = c$.
	
	Consider the set
	$$S_{c} = \{\theta \in \mathbb{F}_{q^2}^{\ast} \ | \ \mathrm{N}_{q^{2}/q}(\theta) = c\}.$$
	As the norm map $\mathrm{N}_{q^{2}/q}: \mathbb{F}_{q^2}^{\ast} \rightarrow \mathbb{F}_{q}^{\ast}$ is surjective and its kernel has order $q+1$, then $\# S_{c} = q+1$.
	
	Suppose that $q$ is odd. If $\theta \in S_{c} \cap \mathbb{F}_{q}$, then
	$$c = \theta^{q+1} = \theta^{2},$$
	which contradicts the fact that $c$ is primitive in $\mathbb{F}_{q}$. Hence $S_{c} \subseteq \mathbb{F}_{q^2} \setminus \mathbb{F}_{q}$, and consequently can be partitioned into $\frac{q+1}{2}$ Frobenius orbits. It follows immediately that $$N_{\mathrm{irr}}(q,c) = \frac{q+1}{2}.$$
	
	Next suppose that $q$ is even. The squaring map on $\mathbb{F}_{q}$ is a
	bijection, so there is exactly one $d \in \mathbb{F}_{q}$ with $d^2=c$.
	It is the unique element of $S_c\cap\mathbb{F}_{q}$.  The remaining $q$
	elements form $\frac{q}{2}$ two-element Frobenius orbits. Thus one has $$N_{\mathrm{irr}}(q,c) = \frac{q}{2}.$$
\end{proof}
	
\begin{lemma}\label{lem 7}
	Given a primitive element $c \in \mathbb{F}_{q}$, we have
	$$
	N_{\mathrm{prim}}(q,c)=
	\begin{cases}
		\dfrac{\varphi(q+1)}{2},& \text{if} \ q\ \text{is even};\\
		\varphi(q+1),& \text{if} \ q\ \text{is odd},
	\end{cases}
	$$
	where $\varphi$ is Euler's totient function.
\end{lemma}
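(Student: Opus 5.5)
I will follow the proof of Lemma \ref{lem 6} and count elements of $S_{c}=\{\theta\in\mathbb{F}_{q^2}^{\ast}:\mathrm{N}_{q^{2}/q}(\theta)=c\}$ that generate $\mathbb{F}_{q^2}^{\ast}$, then pass to Frobenius orbits. By Lemma \ref{lem 2}, $f_{b,c}$ is primitive if and only if it is irreducible and its roots have order $q^2-1$, i.e.\ are primitive elements of $\mathbb{F}_{q^2}$. A primitive $\theta$ is never in $\mathbb{F}_{q}$ (as $q^2-1>q-1$), so it yields an irreducible $f_{b,c}$ with $b=-\mathrm{Tr}(\theta)$ and constant term $\mathrm{N}(\theta)$. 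Also the norm of a primitive element is automatically primitive in $\mathbb{F}_q$, because $\mathrm{N}$ is a surjective homomorphism of cyclic groups. Hence $N_{\mathrm{prim}}(q,c)$ equals the number of primitive elements of $\mathbb{F}_{q^2}$ lying in $S_c$, divided by $2$, since each Frobenius orbit $\{\theta,\theta^q\}$ has two elements, both primitive, with the same norm.

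To count primitive elements in $S_c$, fix a generator $g$ of $\mathbb{F}_{q^2}^{\ast}$, so that $\mathrm{N}(g^k)=g^{k(q+1)}$ and $h=g^{q+1}$ generates $\mathbb{F}_q^{\ast}$. Write $c=h^{m}$ with $\gcd(m,q-1)=1$. Then $g^k\in S_c$ if and only if $k\equiv m \pmod{q-1}$, and $g^k$ is primitive if and only if $\gcd(k,q^2-1)=1$. So I must count the $k \bmod (q-1)(q+1)$ with $k\equiv m\pmod{q-1}$ and $\gcd(k,q^2-1)=1$. This is the main step.

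The key computation proceeds as follows. If $\gcd(q-1,q+1)=1$ ($q$ even), the Chinese remainder theorem makes $k \bmod q^2-1$ correspond to the pair $(k\bmod q-1,\,k\bmod q+1)$. The first coordinate is fixed to $m$, which is already coprime to $q-1$, and the second must be a unit mod $q+1$. This gives $\varphi(q+1)$ elements and hence $\varphi(q+1)/2$ polynomials. If $q$ is odd, $\gcd(q-1,q+1)=2$ and CRT fails. In that case I will argue directly: $\gcd(k,q^2-1)=1$ if and only if $k$ is coprime to $q-1$ and to $q+1$. The first condition holds automatically since $k\equiv m$ with $m$ odd, so $k$ is odd. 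Writing $k=m+(q-1)t$ for $t\in\{0,\dots,q\}$, I need to count $t$ with $\gcd(m+(q-1)t,q+1)=1$. Modulo any odd prime $\ell\mid q+1$, the number $q-1\equiv-2$ is invertible, so $t\mapsto m+(q-1)t$ is a bijection on $\mathbb{Z}/(q+1)$ restricted to odd-prime conditions. The prime $2$ is harmless because $k$ is always odd. So the count equals $\#\{t \bmod q+1: \ell\nmid m-2t \text{ for odd } \ell\mid q+1\}$. Since $t\mapsto m-2t$ is a bijection mod the odd part $M$ of $q+1$, repeated $(q+1)/M$ times, this gives $\frac{q+1}{M}\varphi(M)=\varphi(q+1)\cdot\frac{q+1}{M}\cdot\frac{\varphi(M)}{\varphi(q+1)}$. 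With $q+1=2^eM$, this equals $2^e\varphi(M)=2\varphi(q+1)$. Dividing by $2$ gives $\varphi(q+1)$.

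The main obstacle is this careful count in odd characteristic, where the non-coprimality of $q\pm1$ doubles the count relative to the even case. I will double-check it on a small case, e.g.\ $q=3$: there should be $\varphi(4)=2$ primitive quadratics with $c=2$, namely $X^2+X+2$ and $X^2+2X+2$.
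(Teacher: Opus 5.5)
Your proposal is correct and is essentially the paper's argument: both count the primitive elements of $\mathbb{F}_{q^2}$ in the norm fiber $S_c$ by writing them as powers of a generator, use $q-1\equiv -2 \pmod{q+1}$ to reduce to counting units among $m-2t$ modulo $q+1$ (a bijection when $q$ is even, a $2$-to-$1$ cover of the odd residues when $q$ is odd), and then divide by $2$ for Frobenius orbits. The paper normalizes $m=1$ by choosing a primitive $\gamma$ with $\gamma^{q+1}=c$, and your version with general $m$ actually verifies that such a $\gamma$ exists. One small slip: $\gcd(k,q-1)=1$ holds because $k\equiv m \pmod{q-1}$ and $\gcd(m,q-1)=1$, not because $k$ is odd; the oddness of $k$ is what disposes of the prime $2$ dividing $q+1$.
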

	
\begin{proof}
	Since $\mathbb{F}_{q^2}^{\ast}$ is a cyclic group of order $q^2-1$, one can find a generator of $\mathbb{F}_{q^2}^{\ast}$, i.e., a primitive element $\gamma$ of $\mathbb{F}_{q^2}$ such that $c = \gamma^{q+1}$. For any $0 \leq e < q^{2}-1$, $\gamma^{e} \in S_{c}$ if and only if
	$$\gamma^{e(q+1)} = c = \gamma^{q+1},$$
	which amounts to $e \equiv 1 \pmod{q-1}$. Therefore we have
	$$S_{c} = \{\gamma^{1+k(q-1)} \ | \ 0 \leq k < q+1\}.$$
	Any element $\gamma^{1+k(q-1)}$ in $S_{c}$ is primitive in $\mathbb{F}_{q^2}$ if and only if $\mathrm{gcd}(1+k(q-1),q^{2}-1)=1$, which is equivalent to $\mathrm{gcd}(1+k(q-1),q+1)=1$ as $1+k(q-1)$ is automatically coprime to $q-1$. Note
	$$1+k(q-1) \equiv 1-2k \pmod{q+1}.$$
	
	If $q$ is even, the map
	$$\mathbb{Z}/(q+1)\mathbb{Z} \rightarrow \mathbb{Z}/(q+1)\mathbb{Z}; \ k \mapsto 1-2k$$
	is a bijection. Therefore $S_{c}$ contains $\varphi(q+1)$ primitive element of $\mathbb{F}_{q^{2}}$, which form $\frac{\varphi(q+1)}{2}$ Frobenius orbits. Hence
	$$N_{\mathrm{prim}}(q,c) = \frac{\varphi(q+1)}{2}.$$
	
	If $q$ is odd, then
	$$\mathbb{Z}/(q+1)\mathbb{Z} \rightarrow \mathbb{Z}/(q+1)\mathbb{Z}; \ k \mapsto 1-2k$$
	is a $2$-to-$1$ map with image $1+2\mathbb{Z}/(q+1)\mathbb{Z}$. Since $q+1$ is even, the image of any integer coprime to $q+1$ lies in $1+2\mathbb{Z}/(q+1)\mathbb{Z}$. Thus there are in total $2\varphi(q+1)$ primitive elements of $\mathbb{F}_{q^2}$ lying in $S_{c}$, which form $\varphi(q+1)$ Frobenius orbits. Consequently we have 
	$$N_{\mathrm{prim}}(q,c) = \varphi(q+1).$$
\end{proof}
	
\subsection{The binomial order of an irreducible quadratic polynomial}
\begin{lemma}\label{lem 8}
	Let $G$ be a cyclic group of order $n$. Let $g \in G$, and $t$ be a positive integer. If $\mathrm{ord}(g^{t}) = n$, then $\mathrm{ord}(g) = n$ and $\mathrm{gcd}(n,t)=1$. 
\end{lemma}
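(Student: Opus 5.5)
Since $G$ is cyclic of order $n$, every element of $G$ has order dividing $n$. I would use the general formula for the order of a power in a finite group, namely
$$\mathrm{ord}(g^{t}) = \frac{\mathrm{ord}(g)}{\gcd(\mathrm{ord}(g),t)},$$
which follows from the fact that $(g^{t})^{k}=1$ if and only if $\mathrm{ord}(g)\mid tk$.

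The plan is to write $m=\mathrm{ord}(g)$, so that $m\mid n$. By the formula above, $n=\mathrm{ord}(g^{t}) = m/\gcd(m,t)\le m\le n$. Hence all these inequalities are equalities. This gives $m=n$, that is, $\mathrm{ord}(g)=n$, and also $\gcd(m,t)=1$, which is $\gcd(n,t)=1$.

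Alternatively, to avoid quoting the formula, I would argue directly. The element $g^{t}$ lies in the cyclic subgroup $\langle g\rangle$, so $n=\mathrm{ord}(g^{t})$ divides $|\langle g\rangle|=\mathrm{ord}(g)$, which in turn divides $n$; thus $\mathrm{ord}(g)=n$. Then put $d=\gcd(n,t)$. We have $(g^{t})^{n/d}=(g^{n})^{t/d}=1$, so $n\mid n/d$, which forces $d=1$.

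There is no real obstacle here. The only point requiring care is the divisibility $\mathrm{ord}(g^{t})\mid\mathrm{ord}(g)$, and this is immediate from $\langle g^{t}\rangle\subseteq\langle g\rangle$ together with Lagrange's theorem.
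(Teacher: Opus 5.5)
Your proof is correct and follows essentially the same route as the paper. The divisibility chain $\mathrm{ord}(g^{t})\mid\mathrm{ord}(g)\mid n$ gives $\mathrm{ord}(g)=n$, and then the formula $\mathrm{ord}(g^{t})=n/\gcd(n,t)$ forces $\gcd(n,t)=1$; in your formula-free variant, the computation $(g^{t})^{n/d}=1$ does the same job.
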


\begin{proof}
	Since $\mathrm{ord}(g^{t}) \mid \mathrm{ord}(g)$ and $\mathrm{ord}(g) \mid n$, then $\mathrm{ord}(g^{t}) = n$ indicates $\mathrm{ord}(g) = n$. And by
	$$n = \mathrm{ord}(g^{t}) = \dfrac{n}{\mathrm{gcd}(n,t)},$$
	we have $\mathrm{gcd}(n,t)=1$.
\end{proof}

\begin{lemma}\label{lem 9}
	Let $f(X) = X^{2}+bX+c$ be an irreducible polynomial over $\mathbb{F}_{q}$, with minimal binomial multiple $X^{r}-\lambda$. Then $r \mid q+1$ and $c = \lambda^{\frac{q+1}{r}}$. 
\end{lemma}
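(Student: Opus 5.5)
We work with a root $\theta\in\mathbb{F}_{q^2}\setminus\mathbb{F}_q$ of $f(X)$, so that $f(X)=(X-\theta)(X-\theta^{q})$ and $c=\theta^{q+1}$. Write $n=\mathrm{ord}(f)=\mathrm{ord}(\theta)$, which divides $q^2-1$. By Lemma \ref{lem 1}(2), the binomial order is $r=n/\gcd(n,q-1)$ and the minimal binomial multiple is $X^{r}-\theta^{r}$, so $\lambda=\theta^{r}$. The plan is to prove the two claims in turn: first $r\mid q+1$, then $\lambda^{(q+1)/r}=c$.

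\emph{Step 1: $r\mid q+1$.} I would avoid the arithmetic of $\gcd(n,q-1)$ and argue directly from the definition. Since $c=\theta^{q+1}\in\mathbb{F}_q$, the polynomial $f(X)$ divides $X^{q+1}-c$, because both $\theta$ and $\theta^{q}$ are roots of $X^{q+1}-c$ (note $\theta^{q(q+1)}=c^{q}=c$). Next, the set of $m\geq 1$ with $\theta^{m}\in\mathbb{F}_q$ is the set of positive elements of a subgroup of $\mathbb{Z}$, namely the preimage of $\mathbb{F}_q^{\ast}$ under $m\mapsto\theta^{m}$. For any such $m$, $\theta^{m}$ is fixed by Frobenius, so $\theta^{qm}=\theta^{m}$, and hence $f(X)\mid X^{m}-\theta^{m}$. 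Conversely, if $f(X)\mid X^{m}-\mu$ with $\mu\in\mathbb{F}_q$, then $\theta^{m}=\mu\in\mathbb{F}_q$. So $r$ is the least positive element of this subgroup, i.e.\ its generator. Since $q+1$ lies in the subgroup, $r\mid q+1$.

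\emph{Step 2: $c=\lambda^{(q+1)/r}$.} This is immediate: $\lambda=\theta^{r}$ because $\theta$ is a root of $X^{r}-\lambda$, so
\[
\lambda^{(q+1)/r}=\theta^{q+1}=\theta\cdot\theta^{q}=c,
\]
using that the constant term of a monic quadratic equals the product of its roots.

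The only point needing care is Step 1, specifically identifying binomial multiples of $f$ with exponents $m$ for which $\theta^{m}\in\mathbb{F}_q$. Here irreducibility is used to guarantee that the roots are $\theta$ and $\theta^{q}$ and that they are distinct, so divisibility by $f$ amounts to vanishing at $\theta$ alone once $\theta^{m}$ is Frobenius-fixed. Everything else is routine.
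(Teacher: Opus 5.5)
Your proof is correct and is essentially the paper's argument: the paper identifies $r$ with the order of the coset $\theta\mathbb{F}_q^{\ast}$ in $\mathbb{F}_{q^2}^{\ast}/\mathbb{F}_q^{\ast}$ and uses $\theta^{q+1}=c\in\mathbb{F}_q^{\ast}$ to get $r\mid q+1$, then computes $c=(\theta^{r})^{(q+1)/r}=\lambda^{(q+1)/r}$. Your Step 1, with the subgroup $\{m:\theta^{m}\in\mathbb{F}_q\}$ of $\mathbb{Z}$, is the same identification written out, and it usefully supplies the justification that the paper leaves implicit.
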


\begin{proof}
	Assume that $\theta$ is a root of $f(X)$ lying in $\mathbb{F}_{q^2}$. Then the binomial order $r$ of $f(X)$ is equal to the order of the coset $\theta\mathbb{F}_{q}^{\ast}$ in the quotient group $\mathbb{F}_{q^2}^{\ast}/\mathbb{F}_{q}^{\ast}$. As $\theta^{q+1} =c \in \mathbb{F}_{q}^{\ast}$ then $r \mid q+1$. The latter assertion follows from
	$$c = \theta^{q+1} = (\theta^{r})^{\frac{q+1}{r}} = \lambda^{\frac{q+1}{r}}.$$
\end{proof}

Assume that 
$$q+1 = 2^{m}p_{1}^{e_{1}}\cdots p_{s}^{e_{s}},$$
where $p_{1},\cdots,p_{s}$ are pairwise distinct odd primes, $m > 0$, and $e_{1},\cdots,e_{s}$ are positive integers. Let $f(X) = X^{2}+bX+c$ be an irreducible polynomial over $\mathbb{F}_{q}$, where $c$ is a primitive element of $\mathbb{F}_{q}$. The possible values of the binomial order of $f(X)$ are given by the proposition below.

\begin{proposition}\label{prop 1}
	With the notations given as above, the binomial order of $f(X)$ is in the form
	$$\mathrm{ord}_{\mathrm{b}}(f) = 2^{m}p_{1}^{j_{1}}\cdots p_{s}^{j_{s}},$$
	where $0 \leq j_{i} \leq e_{i}$ for $i=1,\cdots,s$. Furthermore, if $c$ is a primitive element of $\mathbb{F}_{q}$, then $f(X)$ is primitive if and only if $j_{i}=e_{i}$ for every $1\leq i\leq s$.
\end{proposition}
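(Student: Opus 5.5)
The plan is to read off the shape of $\mathrm{ord}_{\mathrm{b}}(f)$ from Lemma \ref{lem 9} together with the primitivity of $c$ via Lemma \ref{lem 8}. The primitivity statement will then follow from Lemma \ref{lem 4} with $k=2$. Note first that the hypothesis $m>0$ means $q+1$ is even, so $q$ is odd and $q-1$ is even.

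Let $X^{r}-\lambda$ be the minimal binomial multiple of $f$, so that $r=\mathrm{ord}_{\mathrm{b}}(f)$. By Lemma \ref{lem 9} we have $r\mid q+1$ and $c=\lambda^{t}$ with $t=\frac{q+1}{r}$. Here $\lambda\in\mathbb{F}_{q}^{\ast}$, which is a cyclic group of order $q-1$, and $\mathrm{ord}(\lambda^{t})=\mathrm{ord}(c)=q-1$ because $c$ is primitive. Lemma \ref{lem 8} then gives $\gcd(q-1,t)=1$.

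Since $q-1$ is even, this forces $t$ to be odd. Hence the full power $2^{m}$ of $2$ dividing $q+1$ must divide $r=\frac{q+1}{t}$. Because $r\mid q+1=2^{m}p_{1}^{e_{1}}\cdots p_{s}^{e_{s}}$, we obtain $r=2^{m}p_{1}^{j_{1}}\cdots p_{s}^{j_{s}}$ with $0\le j_{i}\le e_{i}$, which is the first assertion. The odd part of $t$ imposes no further restriction, since $\gcd(q+1,q-1)=2$; this is why every choice of $j_i$ is a priori allowed, although the proposition only claims the form of $r$.

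For the second assertion, apply Lemma \ref{lem 4} with $k=2$. In that case $(-1)^{k}a_{0}=c$, which is primitive by hypothesis. So $f$ is primitive if and only if $\mathrm{ord}_{\mathrm{b}}(f)=\frac{q^{2}-1}{q-1}=q+1$. By the form of $r$ just established, this holds exactly when $j_{i}=e_{i}$ for every $i$.

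I expect no step to be a real obstacle. The only point needing care is using the primitivity of $c$, via Lemma \ref{lem 8}, to pin down the $2$-part of $r$.
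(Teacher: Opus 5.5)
Your proposal is correct and follows the paper's proof essentially step by step. Lemma \ref{lem 9} gives $c=\lambda^{(q+1)/r}$, primitivity of $c$ forces $\gcd(\frac{q+1}{r},q-1)=1$ and hence $v_2(r)=v_2(q+1)=m$, and Lemma \ref{lem 4} with $k=2$ settles primitivity; the only difference is that you cite Lemma \ref{lem 8} explicitly for the coprimality step, which the paper uses without naming it.
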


\begin{proof}
	Let $r = \mathrm{ord}_{\mathrm{b}}(f)$, and let $X^{r}-\lambda$ be the minimal binomial multiple of $f(X)$. By Lemma \ref{lem 9} we obtain $\mathrm{ord}(\lambda)=q-1$ and $\mathrm{gcd}(\frac{q+1}{r},q-1)=1$.
	Note that 
	$$
	\mathrm{gcd}(q+1,q-1)= \mathrm{gcd}(q+1,2)
	\begin{cases}
		2,& \text{if} \ q\ \text{is odd};\\
		1,& \text{if} \ q\ \text{is even},
	\end{cases}
	$$
	thus $\mathrm{gcd}(\frac{q+1}{r},q-1)=1$ if and only if $v_{2}(r) = v_{2}(q+1)$, that is, $r$ is of the form 
	$$r = 2^{m}p_{1}^{j_{1}}\cdots p_{s}^{j_{s}},$$
	where $0 \leq j_{i} \leq e_{i}$ for all $i=1,\cdots,s$. The last assertion follows immediately from the above argument and Lemma \ref{lem 4}.
\end{proof}

\section{The optimal coefficients-based criterion for primitive quadratic polynomials over finite fields}\label{sec: criterion}
The purpose of this section is to develop a criterion for primitive quadratic polynomials over finite fields, which only depends on their coefficients. We will prove that there exists a polynomial
$$P_{q}(B,C) \in \mathbb{F}_{q}[B,C]$$
such that any quadratic polynomial $f(X) = X^{2}+bX+c$ over $\mathbb{F}_{q}$, where $c$ is a primitive element of $\mathbb{F}_{q}$, is primitive if and only if $(b,c)$ is a root of $P_{q}(B,C)$. Further, we require that $P_{q}(B,C)$ satisfies the property that, setting $C=c$ for a fixed primitive element $c$ of $\mathbb{F}_{q}$, the polynomial $P_{q}(B,c) \in \mathbb{F}_{q}[B]$ is monic and squarefree, and its roots are in an one-to-one correspondence with the coefficients $b$ for which $X^{2}+bX+c$ is primitive. Such a polynomial is called an optimal determining polynomial for primitive quadratic polynomials over $\mathbb{F}_{q}$.

In the following two subsections, we will construct $P_{q}(B,C)$ separately in the cases where $q$ is odd and where $q$ is even. And in the last subsection we will discuss the uniqueness property of $P_{q}(B,C)$.

\subsection{Over a finite field of odd characteristic}
In this subsection, we assume that $q$ is odd and write $N=q+1$. Further, assume that the prime factorization of $N$ is given by
$$N = 2^{m}p_{1}^{e_{1}}\cdots p_{s}^{e_{s}},$$
where $p_{1},\cdots,p_{s}$ are pairwise distinct odd primes, and $m,e_{1},\cdots,e_{s}$ are positive integers. We denote the maximal odd divisor $p_{1}^{e_{1}}\cdots p_{s}^{e_{s}}$ of $N$ by $R$.

We adopt the following notations from Section \ref{sec Luc}. Let $B$ and $C$ be algebraically independent indeterminates over $\mathbb{F}_{q}$. For each integer $n \geq 1$, denote by $\mathcal{U}_{n}(B,C)$, $\mathcal{V}_{n}(B,C)$ and $\Lambda_{n}(B,C)$ the $n$-th Lucas polynomials of the first type, the $n$-th Lucas polynomials of the second type and the $n$-th Lucas atoms respectively.

\begin{lemma}\label{lem 10}
	Let $f(X) = X^{2}+bX+c$ be a quadratic polynomial over $\mathbb{F}_{q}$ with $c \neq 0$, and let $\theta_{1}$ and $\theta_{2}$ be the two roots of $f(X)$. Then for any positive integer $n$ that is coprime to $q$, $\Lambda_{n}(b,c)=0$ if and only if $\mathrm{ord}(\frac{\theta_{1}}{\theta_{2}})=n$.
\end{lemma}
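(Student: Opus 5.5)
The plan is to specialize the identity of Proposition \ref{prop 2} from the generic roots $\alpha,\beta$ to the actual roots $\theta_1,\theta_2$ of $f(X)$, and then read off the zeros of the cyclotomic polynomial in characteristic coprime to $n$.

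First I will justify the specialization. The identity
$$\Lambda_n(-(\alpha+\beta),\alpha\beta)=\Phi_n^{\mathrm{hom}}(\alpha,\beta)=\prod_{\gcd(k,n)=1}(\alpha-\zeta_n^k\beta)$$
holds in $\mathbb{F}_q[\alpha,\beta]$ with $\alpha,\beta$ independent indeterminates. Indeed, by the proof of Proposition \ref{prop 2}, $\Lambda_n$ was constructed as the unique polynomial expressing the symmetric polynomial $\Phi_n^{\mathrm{hom}}$ in the elementary symmetric functions. It is therefore a polynomial identity, and I may substitute $\alpha=\theta_1$, $\beta=\theta_2$ in $\overline{\mathbb{F}}_q$. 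Since $\theta_1+\theta_2=-b$ and $\theta_1\theta_2=c$, this gives
$$\Lambda_n(b,c)=\theta_2^{\varphi(n)}\Phi_n(\theta_1/\theta_2).$$
This step does not need $f$ irreducible or $\theta_1\neq\theta_2$. The case $n=1$ must be handled separately. There $\Lambda_1=1\neq0$, while $\mathrm{ord}(\theta_1/\theta_2)=1$ can occur (a repeated root). So either the statement implicitly takes $n\ge2$, or I will note that this case is excluded. I will restrict to $n\geq 2$ and remark on this.

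Second, because $c\neq0$, both $\theta_1$ and $\theta_2$ are nonzero. Hence $\Lambda_n(b,c)=0$ if and only if $\Phi_n(\theta_1/\theta_2)=0$.

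Third, I will use the standard fact that for $\gcd(n,p)=1$ the roots of $\Phi_n$ in $\overline{\mathbb{F}}_q$ are exactly the elements of multiplicative order $n$. The reason is that $X^n-1$ is separable when $p\nmid n$, so it has $n$ distinct roots forming a cyclic group. Moreover $X^n-1=\prod_{d\mid n}\Phi_d(X)$ with the $\Phi_d$ pairwise coprime, and the $d$-th factor collects precisely the roots of exact order $d$. This is also the description $\Phi_n(X)=\prod(X-\zeta_n^k)$ already used in Proposition \ref{prop 2}. Applying it to $x=\theta_1/\theta_2$ gives $\Phi_n(x)=0\iff\mathrm{ord}(x)=n$, which completes the argument.

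The main obstacle is the first step: making sure the generic identity legitimately specializes. I handle this by viewing it as an identity in the polynomial ring $\mathbb{F}_q[\zeta_n][\alpha,\beta]$ rather than in the fraction field $\mathbf{k}$, and by treating the degenerate case $n=1$ separately.
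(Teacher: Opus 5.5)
Your proposal is correct and follows the same route as the paper: specialize to $\Lambda_n(b,c)=\theta_2^{\varphi(n)}\Phi_n(\theta_1/\theta_2)$, use $c\neq 0$ to discard the factor $\theta_2^{\varphi(n)}$, and use that for $p\nmid n$ the roots of $\Phi_n$ are exactly the elements of order $n$. Your extra care is warranted: the paper's proof tacitly assumes $n\ge 2$, and because $\Lambda_1=1$ by convention, the statement really fails for $n=1$ when $f$ has a repeated root (e.g.\ $f=(X-1)^2$); this does not affect the rest of the paper, since the lemma is only applied with $n\ge 2$.
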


\begin{proof}
	By the definition of the Lucas atom $\Lambda_{n}$ one has
	$$\Lambda_{n}(b,c) = \theta_{2}^{\varphi(n)}\Phi_{n}(\frac{\theta_{1}}{\theta_{2}}).$$
	As $c \neq 0$, neither $\theta_{1}$ nor $\theta_{2}$ equals to $0$. Note that $n$ is coprime to $q$, then $\Lambda_{n}(b,c)=0$ if and only if $\Phi_{n}(\frac{\theta_{1}}{\theta_{2}})=0$, which amounts to that $\frac{\theta_{1}}{\theta_{2}}$ is a primitive $n$-th root of unity.
\end{proof}

\begin{remark}
	Notice that the proof of Lemma \ref{lem 10} does not rely on the fact that $q$ is odd, thus Lemma \ref{lem 10} in fact holds for any finite field $\mathbb{F}_{q}$.
\end{remark}

\begin{lemma}
	Let $n=2^{m}r$ where $r \mid R$. For any $b \in \mathbb{F}_{q}$ and any primitive element $c$ of $\mathbb{F}_{q}$, if $\Lambda_{n}(b,c)=0$ then $f(X) = X^{2}+bX+c$ is irreducible over $\mathbb{F}_{q}$. 
\end{lemma}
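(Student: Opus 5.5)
Suppose $\Lambda_{n}(b,c)=0$ with $n=2^{m}r$, $r\mid R$, and assume for contradiction that $f(X)=X^{2}+bX+c$ is reducible over $\mathbb{F}_{q}$, say with roots $\theta_{1},\theta_{2}\in\mathbb{F}_{q}$. Since $q$ is odd, $n$ divides $q+1$ and hence is coprime to $q$. So Lemma \ref{lem 10} applies (recall $c\neq 0$) and gives $\mathrm{ord}(\theta_{1}/\theta_{2})=n$.

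Now $\theta_{1}/\theta_{2}\in\mathbb{F}_{q}^{\ast}$, so its order divides $q-1$. Hence $n\mid\gcd(q+1,q-1)=2$, which forces $n\le 2$. Because $m\ge 1$, we also have $2\mid n$, so $n=2$. In that case $\theta_{1}/\theta_{2}=-1$, that is, $\theta_{1}=-\theta_{2}$. Then
$$c=\theta_{1}\theta_{2}=-\theta_{2}^{2}.$$
It remains to rule this out. I see two routes.

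\textbf{Route via Proposition \ref{prop 1}.} If $n=2$ then $2^{m}\le 2$, so $m=1$ and $q\equiv 1\pmod 4$. Then $-1$ is a square in $\mathbb{F}_{q}$, so $c=-\theta_{2}^{2}$ is a square. A primitive element of $\mathbb{F}_{q}$ with $q$ odd is never a square, since its order $q-1$ is even. This is a contradiction.

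\textbf{Direct route.} We have $c^{(q-1)/2}=(-1)^{(q-1)/2}\theta_{2}^{q-1}=(-1)^{(q-1)/2}$. For $q\equiv 1\pmod 4$ this gives $c^{(q-1)/2}=1$, contradicting the primitivity of $c$. The case $q\equiv 3\pmod 4$ cannot occur, because then $4\mid q+1$, so $m\ge 2$ and $n\ge 4$.

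Either way reducibility is impossible, so $f$ is irreducible. The only delicate point is the case $n=2$: $\Lambda_{2}=-B$, so we need to rule out $b=0$ with $X^{2}+c$ splitting. This is exactly the square-class argument above.
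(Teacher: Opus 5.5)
Your proof is correct and is essentially the paper's argument: Lemma~\ref{lem 10} gives $\mathrm{ord}(\theta_1/\theta_2)=n$, hence $n\mid\gcd(q-1,q+1)=2$, which forces $m=1$, $\theta_1=-\theta_2$ and $c=-\theta_2^2$; since $q\equiv 1\pmod 4$, this makes $c$ a square, contradicting primitivity. Your two ``routes'' are the same square-class argument (the second is Euler's criterion), and the first does not actually use Proposition~\ref{prop 1}, so that label is a misnomer.
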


\begin{proof}
	Suppose that $f(X)$ is reducible, then it has two roots, say $\theta_{1}$ and $\theta_{2}$, in $\mathbb{F}_{q}$. By Lemma \ref{lem 10} the order of $\frac{\theta_{1}}{\theta_{2}}$ is $n$. Since $\frac{\theta_{1}}{\theta_{2}} \in \mathbb{F}_{q}$ then $n \mid q-1$. By assumption $n \mid q+1$, thus one has
	$$2^{m}r = n \mid \mathrm{gcd}(q-1,q+1)=2,$$
	which indicates that $m=1$ and $r=1$. As $q$ is odd, the only primitive $2$-th root of unity in $\mathbb{F}_{q}$ is $-1$, which implies that $\frac{\theta_{1}}{\theta_{2}}=-1$. It follows that 
	$$b=-(\theta_{1}+\theta_{2}) = 0, \quad c = \theta_{1}\theta_{2} = -\theta_{1}^{2}.$$
	Note that $v_{2}(N)=m=1$, therefore $q \equiv 1 \pmod{4}$, indicating that $-1$ is a quadratic residue in $\mathbb{F}_{q}$, and consequently so is $-\theta_{1}^{2}=c$. This is a contradiction with the fact that $c$ is primitive in $\mathbb{F}_{q}$.
 \end{proof}
 
 \begin{theorem}\label{thm 2}
 	Let 
 	$$I_{q}(B,C) = (-1)^{\frac{N}{2}}\mathcal{V}_{\frac{N}{2}}(B,C)$$
 	and
 	$$P_{q}(B,C) = \Lambda_{N}(B,C).$$
 	For any primitive element $c$ of $\mathbb{F}_{q}$, we have
 	\begin{itemize}
 		\item[(1)] $I_{q}(B,c) \in \mathbb{F}_{q}[B]$ has $\frac{N}{2}$ roots in $\mathbb{F}_{q}$, which are exactly the coefficients $b$ for which $X^{2}+bX+c$ is irreducible over $\mathbb{F}_{q}$;
 		\item[(2)] $P_{q}(B,c) \in \mathbb{F}_{q}[B]$ has $\varphi(N)$ roots in $\mathbb{F}_{q}$, which are exactly the coefficients $b$ for which $X^{2}+bX+c$ is primitive. 
 	\end{itemize}
 	Therefore, the polynomial $P_{q}(B,C)$ is an optimal determining polynomial for primitive quadratic polynomial over $\mathbb{F}_{q}$.
 \end{theorem}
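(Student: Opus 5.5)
The plan is to work with the Binet description and the atomic decompositions. Throughout, fix a primitive $c\in\mathbb F_q$ and $b\in\mathbb F_q$, and let $\theta_1,\theta_2$ be the roots of $X^2+bX+c$ in $\mathbb F_{q^2}$. Since $q$ is odd, $N=q+1$ and $N/2$ are coprime to $q$. This lets us apply Lemma \ref{lem 10} to every divisor of $N$.

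For (1), Theorem \ref{thm 1} gives
\[
\mathcal V_{N/2}=\prod_{d\mid N,\ d\nmid N/2}\Lambda_d=\prod_{r\mid R}\Lambda_{2^m r}.
\]
Hence $I_q(b,c)=0$ if and only if $\Lambda_{2^mr}(b,c)=0$ for some $r\mid R$. By the lemma just before the theorem, any such $b$ makes $f$ irreducible.

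Conversely, suppose $f$ is irreducible. Then $\theta_2=\theta_1^q$, so $\theta_1/\theta_2=\theta_1^{1-q}$ lies in the subgroup of order $q+1$. Its order $n$ satisfies $n\mid N$. We have $(\theta_1/\theta_2)^{N/2}=\theta_1^{(1-q)(q+1)/2}=c^{-(q-1)/2}\cdot(\ldots)$; more cleanly, $\theta_1^{(q+1)/2\cdot(1-q)}=(\theta_1^{q+1})^{(1-q)/2}=c^{-(q-1)/2}=-1$, because $c$ is a non-square. So $n\nmid N/2$, which means $n=2^mr$ with $r\mid R$. Lemma \ref{lem 10} then gives $\Lambda_n(b,c)=0$, and so $I_q(b,c)=0$.

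For the count of roots: $I_q(B,c)$ is monic of degree $N/2$ by Proposition \ref{lem:B-degree-leading-coefficient}, since the sign $(-1)^{N/2}$ cancels the leading coefficient. Lemma \ref{lem 6} says there are exactly $N/2$ such $b$. So $I_q(B,c)$ splits with exactly these distinct roots and is square-free.

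For (2), Lemma \ref{lem 10} says $\Lambda_N(b,c)=0$ if and only if $\operatorname{ord}(\theta_1/\theta_2)=N$. Such $b$ give irreducible $f$ by the preceding lemma. For irreducible $f$ the binomial order $r$ is the order of $\theta_1\mathbb F_q^*$ in $\mathbb F_{q^2}^*/\mathbb F_q^*$. I will check that this equals the order of $\theta_1^{1-q}$. The map $x\mapsto x^{1-q}$ on $\mathbb F_{q^2}^*$ has kernel exactly $\mathbb F_q^*$, so it induces an isomorphism of $\mathbb F_{q^2}^*/\mathbb F_q^*$ onto the order-$(q+1)$ subgroup. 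Hence $\operatorname{ord}(\theta_1/\theta_2)=\operatorname{ord}_{\mathrm b}(f)$. By Lemma \ref{lem 4} (with $c$ primitive) or Proposition \ref{prop 1}, $f$ is primitive if and only if $\operatorname{ord}_{\mathrm b}(f)=N$, which holds if and only if $\Lambda_N(b,c)=0$.

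By Proposition \ref{lem:B-degree-leading-coefficient}, $\Lambda_N(B,c)$ is monic of degree $\varphi(N)$, since $\varphi(N)$ is even. By Lemma \ref{lem 7} the number of primitive $b$ is $\varphi(N)$. A monic polynomial of degree $\varphi(N)$ with $\varphi(N)$ distinct roots in $\mathbb F_q$ equals $\prod_{b\in\mathcal P_c}(B-b)$, which is the optimality condition \eqref{eq 11}.

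The main obstacle is the identification $\operatorname{ord}(\theta_1/\theta_2)=\operatorname{ord}_{\mathrm b}(f)$ via the isomorphism above, together with the non-square computation used in (1). Everything else is degree counting matched against the enumerations of Lemmas \ref{lem 6} and \ref{lem 7}.
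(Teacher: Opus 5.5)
Your proof is correct and follows essentially the same route as the paper: the atomic decomposition $\mathcal V_{N/2}=\prod_{r\mid R}\Lambda_{2^m r}$, Lemma~\ref{lem 10} to turn vanishing of an atom into a statement about the order of $\theta_1/\theta_2$, the $B$-leading-coefficient proposition, and a root count against Lemmas~\ref{lem 6} and~\ref{lem 7}. The differences are only local. You show $\operatorname{ord}(\theta_1/\theta_2)\nmid N/2$ directly from $(\theta_1/\theta_2)^{N/2}=c^{-(q-1)/2}=-1$ instead of citing Proposition~\ref{prop 1}, and you justify via $x\mapsto x^{1-q}$ the identification $\operatorname{ord}(\theta^{q-1})=\operatorname{ord}_{\mathrm b}(f)$, which the paper uses without comment. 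In (2) you get irreducibility and distinct roots from the auxiliary lemma and the exact count, whereas the paper gets them from $\Lambda_N\mid\mathcal V_{N/2}$ together with part (1).
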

 
 \begin{proof}
 	As $\frac{N}{2} = 2^{m-1}R$, a divisor $d$ of $N$ does not divide $\frac{N}{2}$ if and only if it has the form $d = 2^{m}r$ where $r \mid R$. Then the atomic decomposition of $\mathcal{V}_{\frac{N}{2}}(B,C)$ gives
 	$$\mathcal{V}_{\frac{N}{2}}(B,C) = \prod_{\substack{d\mid N\\d \nmid \frac{N}{2}}}\Lambda_{d}(B,C) = \prod_{r \mid R}\Lambda_{2^{m}r}(B,C).$$
 	Fix a primitive element $c$ of $\mathbb{F}_{q}$. For any $b \in \mathbb{F}_{q}$, if $f_{b}(X) = X^{2}+bX+c$ is irreducible, by Proposition \ref{prop 1}
 	$$\mathrm{ord}(\theta^{q-1}) = \mathrm{ord}_{\mathrm{b}}(f_{b}) = 2^{m}r,$$
 	where $\theta$ is a root of $f_{b}(X)$. It follows from Lemma \ref{lem 10} that $(b,c)$ is a root of $\Lambda_{2^{m}r}(B,C)$, and hence a root of $\mathcal{V}_{\frac{N}{2}}(B,C)$. This proves that each $b \in \mathbb{F}_{q}$ such that $f_{b}(X)$ is irreducible gives rise to a root of $\mathcal{V}_{\frac{N}{2}}(B,c)$. Notice that there are $\frac{N}{2}$ such elements $b \in \mathbb{F}_{q}$, as many as the possible roots of $\mathcal{V}_{\frac{N}{2}}(B,c)$. Therefore the assertion $(1)$ holds.
 	
 	Let $b \in \mathbb{F}_{q}$ be a root of $\Lambda_{N}(B,c)$. As $\Lambda_{N}(B,c) \mid \mathcal{V}_{\frac{N}{2}}(B,c)$, $b$ is a root of $\mathcal{V}_{\frac{N}{2}}(B,c)$. By the above argument, the polynomial $f_{b}(X) = X^{2}+bX+c$ is irreducible, and its roots are in the form $\theta$ and $\theta^{q}$. Following from Lemma \ref{lem 10} again, one has 
 	$$\mathrm{ord}(\theta^{q-1}) = \mathrm{ord}_{\mathrm{b}}(f_{b}) = N,$$
 	which indicates by Proposition \ref{prop 1} that $f_{b}(X)$ is primitive. Since the roots of $V_{\frac{N}{2}}(B,c)$ are all in $\mathbb{F}_{q}$ and pairwise distinct, then so are the roots of $\Lambda_{N}(B,c)$. Note that there are $\varphi(N)$ roots of $\Lambda_{N}(B,c)$, as many as the elements $b \in \mathbb{F}_{q}$ for which $f_{b}(X)$ is primitive. Hence the assertion $(2)$ holds. As $N =q+1 \geq 4$, $\varphi(N)$ is even. Then by Proposition \ref{lem:B-degree-leading-coefficient} the coefficient of the leading term of $P_{q}(B,C)=\Lambda_{N}(B,C)$ with respect to $B$ is $1$. It follows that $P_{q}(B,C)$ is an optimal determining polynomial for primitive quadratic polynomial over $\mathbb{F}_{q}$.
 \end{proof}
	
Theorem \ref{thm 2} shows that when the indeterminate $C$ is specialized to primitive elements of $\mathbb{F}_{q}$, the solutions of 
$$I_{q}(B,C)=0$$
completely determine the irreducible quadratic polynomials over $\mathbb{F}_{q}$, and the solutions of
$$P_{q}(B,C)=0$$
completely determine the primitive quadratic polynomials over $\mathbb{F}_{q}$. However, computing the polynomial $P_{q}(B,C) = \Lambda_{N}(B,C)$ via the definition of the Lucas atom $\Lambda$ or via the recurrence formula given in Corollary \ref{coro 4} is complicated. We next
give a criterion involving only Lucas polynomials, which can be
computed directly from their recurrence relations.

\begin{lemma}
	\label{lem:coprime-Lucas-atoms}
	Let $d,e\geq 2$ be distinct positive integers such that
	\[
	p\nmid de.
	\]
	Then
	\[
	\gcd(\Lambda_d(B,C),\Lambda_e(B,C))=1.
	\]
\end{lemma}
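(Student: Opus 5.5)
The plan is to move from $\mathbb F_q[B,C]$ to the polynomial ring in the roots $\alpha,\beta$, where both atoms split into linear forms, and then compare those linear forms. The only real obstacle is setting up the symmetric-polynomial embedding correctly, especially treating $\alpha,\beta$ as independent indeterminates rather than as elements of $\overline{\mathbf k}$. After that, unique factorization finishes the argument.

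\textbf{Step 1: the embedding.} Let $\alpha,\beta$ be independent indeterminates over $\overline{\mathbb F}_q$. Consider the ring homomorphism
\[
\iota:\mathbb F_q[B,C]\to\overline{\mathbb F}_q[\alpha,\beta],\qquad B\mapsto-(\alpha+\beta),\ C\mapsto\alpha\beta .
\]
Since the elementary symmetric polynomials $\alpha+\beta$ and $\alpha\beta$ are algebraically independent, $\iota$ is injective. It sends constants to constants and nonconstant polynomials to nonconstant polynomials. This is the same identification already used in the proof of Proposition \ref{prop 2}. By that proposition, for $n\ge2$ with $p\nmid n$,
\[
\iota(\Lambda_n)=\Phi_n^{\mathrm{hom}}(\alpha,\beta)=\prod_{\substack{1\le k\le n\\ \gcd(k,n)=1}}(\alpha-\zeta_n^k\beta),
\]
where $\zeta_n\in\overline{\mathbb F}_q$ is a primitive $n$-th root of unity. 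Such a root exists precisely because $p\nmid n$.

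\textbf{Step 2: reduce to a shared linear factor.} Suppose, for contradiction, that $G\in\mathbb F_q[B,C]$ is a nonconstant common divisor of $\Lambda_d$ and $\Lambda_e$. Applying $\iota$, the nonconstant polynomial $\iota(G)$ divides both $\iota(\Lambda_d)$ and $\iota(\Lambda_e)$ in the UFD $\overline{\mathbb F}_q[\alpha,\beta]$. Take any irreducible factor of $\iota(G)$. Since the linear forms $\alpha-\zeta\beta$ are irreducible, unique factorization forces this factor to be associate to some $\alpha-\zeta_d^k\beta$ with $\gcd(k,d)=1$. It must also be associate to some $\alpha-\zeta_e^j\beta$ with $\gcd(j,e)=1$.

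\textbf{Step 3: compare the linear forms.} Two linear forms $\alpha-\lambda\beta$ and $\alpha-\mu\beta$ are associate only when $\lambda=\mu$, because both have $\alpha$-coefficient $1$. Hence $\zeta_d^k=\zeta_e^j$. The left side has multiplicative order exactly $d$ and the right side has order exactly $e$. This gives $d=e$, contradicting the hypothesis that $d$ and $e$ are distinct. Therefore $\Lambda_d$ and $\Lambda_e$ have no nonconstant common factor in $\mathbb F_q[B,C]$, so $\gcd(\Lambda_d,\Lambda_e)=1$ up to units.

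The hypothesis $p\nmid de$ is used in Step 1 to guarantee that $\zeta_d$ and $\zeta_e$ exist with exact orders $d$ and $e$. Without it, Proposition \ref{prop 2} would only give $\Lambda_n=\Lambda_{n'}^{\varphi(p^a)}$, and atoms of different index could share factors.
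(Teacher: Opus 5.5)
Your proof is correct and follows essentially the same route as the paper. Both arguments pass to $\alpha,\beta$ through the symmetric-function identification and use the fact that primitive $d$-th and $e$-th roots of unity are distinct for $d\neq e$. Your version is a bit more careful in two places. First, you use only the direction that is actually needed: a nonconstant common divisor in $\mathbb F_q[B,C]$ maps under the injective map $\iota$ to a nonconstant common divisor in $\overline{\mathbb F}_q[\alpha,\beta]$. The paper instead asserts an equality of gcds. Second, you make explicit, via the factorization into linear forms $\alpha-\zeta\beta$, the step from $\gcd(\Phi_d,\Phi_e)=1$ to the coprimality of the homogenized polynomials, which the paper passes over quickly.
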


\begin{proof}
	Since the roots of $\Phi_d(X)$ are the primitive
	$d$-th roots of unity, and the roots of $\Phi_e(X)$
	are the primitive $e$-th roots of unity, then $d\neq e$ implies that $\Phi_d(X)$ and $\Phi_e(X)$ have no common root. Thus 
	\[
	\gcd(\Phi_d(X),\Phi_e(X))=1,
	\]
	and consequently 
	\[
	\gcd(
	\Phi_d^{\mathrm{hom}}(\alpha,\beta),
	\Phi_e^{\mathrm{hom}}(\alpha,\beta)
	)=1.
	\]
    Note that $B = -(\alpha,\beta)$ and $C = \alpha\beta$. By the fundamental theorem of symmetric polynomials, the symmetric polynomials in $\mathbb{F}_{q}[\alpha,\beta]$ are in an one-to-one correspondence with the polynomials in $\mathbb{F}_{q}[B,C]$. Hence we have
    \[
    \gcd(\Lambda_d(B,C),\Lambda_e(B,C))= \gcd(
    \Phi_d^{\mathrm{hom}}(\alpha,\beta),
    \Phi_e^{\mathrm{hom}}(\alpha,\beta))=1.
    \]
    \end{proof}
    
    For any polynomials $F(X), G(X) \in \mathbb F_q[C][B]$, we denote by
    $\operatorname{lcm}_B(F(X),G(X))$ their least common multiple normalized to
    be monic with respect to the indeterminate $B$.
    
    \begin{proposition}
    \label{prop:Lucas-polynomial-criterion}
    Let the notation be as in Theorem \ref{thm 2}. Define
    \[
    E_q(B,C)
    =
    \operatorname{lcm}
    (\mathcal{V}_{\frac{N}{2p_{1}}}(B,C),\ldots,
    \mathcal{V}_{\frac{N}{2p_{s}}}(B,C)).
    \]
    Then an irreducible quadratic polynomial $f(X)=X^2+bX+c\in\mathbb F_q[X]$, where $c$ is a primitive element of $\mathbb F_q$, is primitive
    if and only if $E_q(b,c)\neq 0$.
    \end{proposition}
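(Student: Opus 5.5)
The plan is to reduce the statement to Proposition \ref{prop 1} by means of the atomic decomposition in Theorem \ref{thm 1}. Fix a primitive $c$ and a $b$ with $f=X^2+bX+c$ irreducible. Let $\theta$ be a root of $f$ and put $r=\mathrm{ord}_{\mathrm b}(f)$. By Proposition \ref{prop 1}, $r=2^m p_1^{j_1}\cdots p_s^{j_s}$ with $0\le j_i\le e_i$, and $f$ is primitive iff $r=N$. As in the proof of Theorem \ref{thm 2}, $\mathrm{ord}(\theta^{q-1})=r$. Since $r\mid N$ and $p\nmid N$, Lemma \ref{lem 10} gives $\Lambda_d(b,c)=0$ iff $d=r$, for every divisor $d$ of $N$.

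Next I evaluate each $\mathcal V_{N/(2p_i)}$ at $(b,c)$. We have $N/(2p_i)=2^{m-1}R/p_i$, and by \eqref{eq 2}
$$\mathcal V_{\frac{N}{2p_i}}(B,C)=\prod_{\substack{d\mid N/p_i\\ d\nmid N/(2p_i)}}\Lambda_d(B,C)=\prod_{t\mid R/p_i}\Lambda_{2^m t}(B,C).$$
Hence $\mathcal V_{N/(2p_i)}(b,c)=0$ iff $r=2^m t$ for some $t\mid R/p_i$, that is, iff $j_i<e_i$. (This can also be seen directly: $V_n=\theta^n+\theta^{nq}=0$ iff $\theta^{(q-1)n}=-1$.) So $f$ is primitive iff none of the $\mathcal V_{N/(2p_i)}$ vanishes at $(b,c)$.

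The remaining step is to relate vanishing of the lcm to vanishing of its arguments. Every $d$ appearing in these products divides $N$, so $p\nmid d$. By Lemma \ref{lem:coprime-Lucas-atoms}, distinct such atoms are pairwise coprime in $\mathbb F_q[B,C]$. Each $\Lambda_d$ is square-free over the UFD $\mathbb F_q[B,C]$, because its preimage $\Phi_d^{\mathrm{hom}}$ is a product of distinct linear forms in $\alpha,\beta$. It follows that
$$E_q=\pm\prod_{d\in D}\Lambda_d, \qquad D=\{2^m t : t\mid R,\ t\ne R\}.$$
The normalization to be $B$-monic is harmless by Proposition \ref{lem:B-degree-leading-coefficient}, since the leading coefficients are $\pm1$. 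Therefore $E_q(b,c)=0$ iff $r\in D$, iff $r\neq N$, iff $f$ is not primitive.

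I expect the main obstacle to be this lcm step. Evaluation does not commute with an lcm in general, so the argument needs the explicit factorization of $E_q$ into atoms. That factorization depends on the pairwise coprimality and square-freeness of the atoms, and it is essential that every index involved is coprime to $p$. Once $E_q$ is written as a product of the $\Lambda_d$ with $d\in D$, evaluating at $(b,c)$ finishes the proof by the case analysis above.
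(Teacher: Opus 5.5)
Your proof is correct, but it takes a different route from the paper. The paper argues globally. It shows $E_q=\pm\prod_{r\mid R,\ r\neq R}\Lambda_{2^m r}=\pm\mathcal V_{N/2}/\Lambda_N=\pm I_q/P_q$ and then reads the criterion off Theorem~\ref{thm 2}. That theorem, through the counts in Lemmas~\ref{lem 6} and~\ref{lem 7} together with the $B$-degrees, says that $I_q(B,c)$ is square-free with roots exactly the $b$ giving irreducible $f$, and that $P_q(B,c)$ is the factor whose roots give primitive $f$. You argue pointwise instead. Proposition~\ref{prop 1} and Lemma~\ref{lem 10} show that among the divisors $d$ of $N$ only $\Lambda_r$ vanishes at $(b,c)$, which tells you exactly which $\mathcal V_{N/(2p_i)}$ vanish, namely those with $j_i<e_i$. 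This bypasses the enumeration results entirely and gives finer information about the binomial order. In exchange, the paper's route yields the stronger identity $E_q(B,c)P_q(B,c)=\pm I_q(B,c)$, so $E_q(B,c)$ is exactly the square-free polynomial whose roots are the $b$ giving irreducible but non-primitive $f$.

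The step you single out as the main obstacle is not actually one in your route. Each $\mathcal V_{N/(2p_i)}$ divides $E_q$, and $E_q$ divides $\prod_i\mathcal V_{N/(2p_i)}$. Hence $E_q(b,c)=0$ if and only if some $\mathcal V_{N/(2p_i)}(b,c)=0$, for any lcm whatsoever. Your proof is therefore already complete after your second paragraph, and you need neither the coprimality nor the square-freeness of the atoms. The paper needs the explicit factorization only to identify $E_q$ with $\pm I_q/P_q$, and even there pairwise coprimality suffices without square-freeness.
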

    
    \begin{proof}
    For each $1\leq i\leq s$, put
    \[
    n_i=\frac{N}{2p_i}
    =2^{m-1}\frac{R}{p_i}.
    \]
    From Theorem \ref{thm 2}, it suffices to show $E_{q}(B,C)$ and $\frac{I_{q}(B,C)}{P_{q}(B,C)}$ differ by a scalar multiplication by an element in $\mathbb{F}_{q}^{\ast}$.
    
    For any $1 \leq i \leq s$,
    \[
    \mathcal{V}_{n_i}(B,C)
    =
    \prod_{\substack{d\mid 2n_i\\d\nmid n_i}}
    \Lambda_d(B,C) = \prod_{r \mid \frac{R}{p_{i}}}\Lambda_{2^m r}(B,C).
    \]
    Setting
    \[
    \Omega
    =
    \bigcup_{i=1}^s
    \left\{
    r:r\mid\frac{R}{p_i}
    \right\},
    \]
    we claim that 
    \[
    E_q(B,C)
    =
    \prod_{r\in\Omega}
    \Lambda_{2^mr}(B,C).
    \]
    
    Clearly each $\mathcal{V}_{n_{i}}$ divides $\prod\limits_{r \in \Omega}\Lambda_{2^m r}$, therefore so does $\operatorname{lcm}
    (\mathcal{V}_{n_{1}},\ldots,
    \mathcal{V}_{n_{s}})$. On the other hand, for each $r \in \Omega$, $\Lambda_{2^m r}$ is a factor of some $\mathcal{V}_{n_{i}}$, and thus is a factor of $\operatorname{lcm}
    (\mathcal{V}_{n_{1}},\ldots,
    \mathcal{V}_{n_{s}})$. And by Lemma \ref{lem:coprime-Lucas-atoms}, all the $\Lambda_{2^m r}$, $r \in \Omega$, are pairwise coprime. Hence 
    \[
    \prod_{r\in\Omega}
    \Lambda_{2^mr} \mid \operatorname{lcm}
    (\mathcal{V}_{n_{1}},\ldots,
    \mathcal{V}_{n_{s}}),
    \]
    proving the claim.
    
    Notice that $r \in \Omega$ ranges over all the positive divisors of $R$ except $R$ itself. Thus we have
    $$\operatorname{lcm}
    (\mathcal{V}_{n_{1}},\ldots,
    \mathcal{V}_{n_{s}}) = \dfrac{\prod\limits_{r \mid R}\Lambda_{2^m r}}{\Lambda_{2^m R}} = \dfrac{\mathcal{V}_{\frac{N}{2}}}{\Lambda_{N}},$$
    which by Proposition \ref{lem:B-degree-leading-coefficient} differs from $\frac{I_{q}}{P_{q}}$ by $\pm1$.
    \end{proof}
    
\begin{examples}
	Let $q$ be a power of odd prime such that 
	$$q+1 = N = 2^m\pi,$$
	where $m \geq 1$ and $\pi$ is an odd prime. Proposition \ref{prop:Lucas-polynomial-criterion} gives that for any pair $(b,c)$, where $b \in \mathbb{F}_{q}$ and $c$ is a primitive element of $\mathbb{F}_{q}$, the polynomial
	$$X^{2}+bX+c \in \mathbb{F}_{q}[X]$$
	is primitive if and only if 
	$$\mathcal{V}_{2^{m-1}}(b,c) \neq 0.$$
	The next table lists the concrete coefficients-based criteria for $X^{2}+bX+c$ to be primitive in the cases $m=2$, $4$, $8$ and $16$ respectively.
	
	\begin{table}[H]
		\centering
		\label{Table2}
		\begin{tabular}{cc}
			\toprule
			$q+1$ & Exceptional equation\\
			\midrule
			$2\pi$ &
			$b\neq0$\\[1ex]
			
			$4\pi$ &
			$b^2\neq2c$\\[1ex]
			
			$8\pi$ &
			$b^4-4b^2c+2c^2\neq0$\\[1ex]
			
			$16\pi$ &
			$b^8-8b^6c+20b^4c^2-16b^2c^3+2c^4\neq0$\\
			\bottomrule
		\end{tabular}
	\end{table} 
\end{examples}

Observe that in the cases where $N=2\pi$ and where $N=4\pi$, the equations $b \neq 0$ and $b^{2} \neq 2c$ precisely give rise to the criteria given by Vega (\cite{VegaCharacterization}, \cite{VegaNecessary}) for these two families of parameters. We are thus led to an interpretation of Vega's criteria: they are not isolated examples, but rather come from certain Lucas polynomial family, which capture complete information on primitive quadratic polynomials over finite fields.

\subsection{Over a finite field of characteristic $2$}
In this subsection, we turn to the case where $q$ is a power of $2$. We set $N=q+1$ and assume
\[
N=p_1^{e_1}\cdots p_s^{e_s},
\]
where $p_1,\ldots,p_s$ are pairwise distinct odd primes and
$e_1,\ldots,e_s$ are positive integers. The main difference from the odd-characteristic case is that the relevant Lucas atom occurs with multiplicity $2$ in the indeterminate $B$. Therefore to obtain an optimal determining polynomial for primitive quadratic polynomials over $\mathbb{F}_{q}$, a map called the inverse-Frobenius operator is introduced to remove the multiplicity.

\begin{lemma}
	\label{lem:char2-even-B}
	For every odd integer $n>1$, one has
	\[
	\mathcal{U}_n(B,C),\ \Lambda_n(B,C)\in\mathbb F_q[B^2,C].
	\]
\end{lemma}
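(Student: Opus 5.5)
We prove first that $\mathcal{U}_n(B,C)\in\mathbb F_q[B^2,C]$ for odd $n$, by a parity argument on the recurrence, and then deduce the statement for $\Lambda_n$ from the atomic decomposition of Theorem \ref{thm 1}.

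For the first step, we show by induction on $n\ge 0$ that $\mathcal{U}_n(B,C)$ is an even function of $B$ when $n$ is odd and an odd function of $B$ when $n$ is even. Concretely, we claim
\[
\mathcal{U}_n(-B,C)=(-1)^{n-1}\mathcal{U}_n(B,C)
\]
over any base ring. The cases $n=0$ and $n=1$ are immediate, since $\mathcal{U}_0=0$ and $\mathcal{U}_1=1$. For the inductive step, the recurrence $\mathcal{U}_n=-B\,\mathcal{U}_{n-1}-C\,\mathcal{U}_{n-2}$ propagates the sign $(-1)^{n-1}$, because both $-B\,\mathcal{U}_{n-1}$ and $C\,\mathcal{U}_{n-2}$ pick up that sign under $B\mapsto -B$. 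In characteristic $2$ this sign identity carries no information, so the statement must instead be phrased as "only even powers of $B$ occur." Equivalently, the recurrence shows that every monomial $B^iC^j$ in $\mathcal{U}_n$ satisfies $i+2j=n-1$, that is, $\mathcal{U}_n$ is homogeneous of weight $n-1$ with $B$ of weight $1$ and $C$ of weight $2$. This weighted homogeneity follows from the same induction and holds over $\mathbb Z$, hence also over $\mathbb F_q$. For odd $n$ the weight $n-1$ is even, so $i$ is even and $\mathcal{U}_n\in\mathbb F_q[B^2,C]$.

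For the atoms, we use the same weight argument rather than dividing polynomials. In $\mathbb F_q[\alpha,\beta]$, give $\alpha$ and $\beta$ weight $1$, so that $B=-(\alpha+\beta)$ has weight $1$ and $C=\alpha\beta$ has weight $2$. Then $\Phi_d^{\mathrm{hom}}(\alpha,\beta)$ is homogeneous of degree $\varphi(d)$. Since the correspondence between $\mathbb F_q[B,C]$ and the symmetric polynomials is graded for this weighting, $\Lambda_d(B,C)$ is weighted homogeneous of weight $\varphi(d)$. Every monomial $B^iC^j$ in $\Lambda_d$ therefore satisfies $i\equiv\varphi(d)\pmod 2$. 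For an odd $n>1$ we have $\varphi(n)$ even, because $n$ has an odd prime factor $\ell$ and $\ell-1\mid\varphi(n)$. Hence $\Lambda_n\in\mathbb F_q[B^2,C]$. The same argument gives the statement for $\mathcal{U}_n$ directly as well, via Theorem \ref{thm 1}, since its weight is $\sum_{d\mid n,\,d>1}\varphi(d)=n-1$, which is even.

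The main point requiring care is to justify that the isomorphism from $\mathbb F_q[B,C]$ onto the symmetric polynomials in $\mathbb F_q[\alpha,\beta]$ respects the weighted grading. This holds because the map is a graded ring homomorphism once $B$ is assigned weight $1$ and $C$ weight $2$, and it is injective, so the image of a weighted homogeneous preimage stays homogeneous of the same weight. An alternative route avoids the grading and argues by induction through Corollary \ref{coro 4}, dividing $\mathcal{U}_n$ by the product of the $\Lambda_d$ with $d\mid n$, $1<d<n$, all of which are odd. That route, however, requires showing that an exact quotient of elements of $\mathbb F_q[C][B^2]$ stays in $\mathbb F_q[C][B^2]$. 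This is true, since division by a polynomial in $B^2$ that is monic up to sign in $B$ can be carried out entirely inside $\mathbb F_q[C][B^2]$. The weight argument is cleaner, and it is the one I would write out.
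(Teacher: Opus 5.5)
Your proof is correct, but it takes a different route from the paper's.

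\textbf{The paper's route.} The paper works specifically in characteristic $2$.
\begin{itemize}
\item For $\mathcal{U}_n$ it proves $\mathcal{U}_{2r+1}=\mathcal{U}_{r+1}^2+C\,\mathcal{U}_r^2$, where the sign is immaterial since $-C=C$. The Frobenius then places the squares in $\mathbb F_q[B^2,C]$.
\item For $\Lambda_n$ it uses $\partial\mathcal{U}_n/\partial B=0$ and differentiates the atomic decomposition. By the pairwise coprimality of the atoms $\Lambda_d$, $d\mid n$ (Lemma \ref{lem:coprime-Lucas-atoms}, which needs $p\nmid d$), it gets $\Lambda_d\mid \partial\Lambda_d/\partial B$. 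A $B$-degree count then forces $\partial\Lambda_d/\partial B=0$.
\end{itemize}

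\textbf{Your route.} You replace both steps with one grading observation. The substitution $B\mapsto-(\alpha+\beta)$, $C\mapsto\alpha\beta$ is an injective graded homomorphism for the weights $(1,2)$. Hence $\Lambda_d$ is weighted homogeneous of weight $\varphi(d)$, and $\mathcal{U}_n$ is weighted homogeneous of weight $n-1$. The parity of these weights finishes the proof.

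\textbf{What each buys.} Your argument is characteristic-free and needs neither the coprimality lemma nor the derivative trick. It also proves more: $\Lambda_n\in\mathbb F_q[B^2,C]$ for every $n\ge 3$, even $n$ included, and $\mathcal{U}_n\in\mathbb F_q[B^2,C]$ for odd $n$ in any characteristic. The paper's argument fits its Frobenius-multiplicity theme and rests on a reusable principle: if a product of pairwise coprime factors has vanishing $B$-derivative, so does each factor. However, it is tied to characteristic $2$ and depends on Lemma \ref{lem:coprime-Lucas-atoms}, whose proof in the paper is rather sketchy.

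Your opening sign identity $\mathcal{U}_n(-B,C)=(-1)^{n-1}\mathcal{U}_n(B,C)$ carries no information in characteristic $2$, as you note yourself. You can drop it and start directly from the weight statement.
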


\begin{proof}
	We first prove the assertion for $U_n(B,C)$.
	For every integer $r\geq 0$, we claim that
	\[
	\mathcal{U}_{2r+1}(B,C)
	=
	\mathcal{U}_{r+1}(B,C)^2+C\,\mathcal{U}_r(B,C)^2,
	\]
	which clearly implies that $\mathcal{U}_n(B,C) \in \mathbb{F}_q[B^2,C]$. Let $\alpha$ and $\beta$ be the roots of
	\[
	T^2+BT+C \in \mathbf{k},
	\]
	where $\mathbf{k} = \mathbb{F}_{q}(B,C)$. The Binet formula gives
	\[
	\mathcal{U}_r(B,C)
	=
	\frac{\alpha^r-\beta^r}{\alpha-\beta},
	\]
	therefore one has
	\[
	\begin{aligned}
		\mathcal{U}_{r+1}^2+C\,\mathcal{U}_r^2
		&=
		\frac{
			(\alpha^{r+1}-\beta^{r+1})^2
			-\alpha\beta(\alpha^r-\beta^r)^2
		}{(\alpha-\beta)^2} \\
		&=
		\frac{
			\alpha^{2r+2}+\beta^{2r+2}
			-\alpha^{2r+1}\beta
			-\alpha\beta^{2r+1}
		}{(\alpha-\beta)^2} \\
		&=
		\frac{
			(\alpha-\beta)
			(\alpha^{2r+1}-\beta^{2r+1})
		}{(\alpha-\beta)^2} \\
		&=
		\mathcal{U}_{2r+1}(B,C),
	\end{aligned}
	\]
	which proves the claim.
	
	Next we prove the assertion for the Lucas atom $\Lambda_{n}(B,C)$.
	Let $n>1$ be odd. By the Lucas atomic decomposition,
	\begin{equation}\label{eq 10}
		\mathcal{U}_n(B,C)
		=
		\prod_{\substack{d\mid n\\d>1}}
		\Lambda_d(B,C).
	\end{equation}
	Since $n$ is odd and the characteristic is $2$, every divisor
	$d$ of $n$ is coprime to the characteristic. Hence, by
	Lemma~\ref{lem:coprime-Lucas-atoms}, the distinct Lucas atoms $\Lambda_d(B,C)$, for $d\mid n$ and $d>1$, are pairwise coprime.
	
	Since
	\[
	\mathcal{U}_n(B,C)\in\mathbb F_q[B^2,C],
	\]
	the formal partial derivative of $\mathcal{U}_{n}(B,C)$ with respect to $B$ is vanishing, i.e.,
	\[
	\frac{\partial\,\mathcal{U}_n}{\partial B}=0.
	\]
	Differentiating \eqref{eq 10} with respect to $B$ gives
	\[
	0
	=
	\sum_{\substack{d\mid n\\d>1}}
	\frac{\partial\Lambda_d}{\partial B}
	\prod_{\substack{e\mid n\\e>1\\e\neq d}}
	\Lambda_e,
	\]
	which modulo a fixed
	$\Lambda_d(B,C)$ yields
	\[
	\frac{\partial\Lambda_d}{\partial B}
	\prod_{\substack{e\mid n\\e>1\\e\neq d}}
	\Lambda_e
	\equiv 0
	\pmod{\Lambda_d}.
	\]
	Since $\Lambda_d$ is coprime to every $\Lambda_e$ with
	$e\neq d$, we obtain
	\[
	\Lambda_d
	\mid
	\frac{\partial\Lambda_d}{\partial B}.
	\]
	However,
	\[
	\deg_B
	\left(
	\frac{\partial\Lambda_d}{\partial B}
	\right)
	<
	\deg_B(\Lambda_d)
	\]
	unless the derivative is zero. Therefore
	\[
	\frac{\partial\Lambda_d}{\partial B}=0.
	\]
	
	Over a field of characteristic $2$, a polynomial in
	$\mathbb F_q[B,C]$ has zero derivative with respect to $B$ if and
	only if every exponent of $B$ occurring in it is even. Hence
	\[
	\Lambda_d(B,C)\in\mathbb F_q[B^2,C].
	\]
	Taking $d=n$ gives
	\[
	\Lambda_n(B,C)\in\mathbb F_q[B^2,C],
	\]
	as desired.
\end{proof}

\begin{corollary}
	If $n >1$ is odd, then for any primitive element $c$ of $\mathbb{F}_{q}$, every root of the induced polynomial $\Lambda_{n}(B,c) \in \mathbb{F}_{q}[B]$ has multiplicity $2$. 
\end{corollary}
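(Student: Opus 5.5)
The plan is to combine two facts: the parity structure of $\Lambda_n$ from Lemma~\ref{lem:char2-even-B}, which forces every root of $\Lambda_n(B,c)$ to have even multiplicity, and an explicit count showing that $\Lambda_n(B,c)$ has exactly $\varphi(n)/2$ distinct roots in $\overline{\mathbb F}_q$. Since the $B$-degree is $\varphi(n)$, these two facts together force every multiplicity to be exactly $2$.

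\emph{Step 1 (even multiplicities).} By Lemma~\ref{lem:char2-even-B} we can write $\Lambda_n(B,c)=G(B^2)$ for some $G(Y)\in\mathbb F_q[Y]$. Since $\mathbb F_q$ is perfect of characteristic $2$, taking square roots of the coefficients of $G$ gives $H(B)\in\mathbb F_q[B]$ with $G(B^2)=H(B)^2$. Hence every root of $\Lambda_n(B,c)$ in $\overline{\mathbb F}_q$ has even multiplicity. By Proposition~\ref{lem:B-degree-leading-coefficient}, the leading coefficient in $B$ is $(-1)^{\varphi(n)}$. This is a constant, so it survives specialization, and $\deg\Lambda_n(B,c)=\varphi(n)$.

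\emph{Step 2 (locating the roots).} Let $b\in\overline{\mathbb F}_q$ be a root, and let $\theta_1,\theta_2$ be the roots of $X^2+bX+c$. The argument of Lemma~\ref{lem 10} works verbatim over $\overline{\mathbb F}_q$, so $\zeta=\theta_1/\theta_2$ is a primitive $n$-th root of unity. Conversely, each primitive $n$-th root $\zeta$ determines a root, as follows. From $\theta_2^2\zeta=c$ we get $\theta_2=\sqrt{c/\zeta}$, which is unique in characteristic $2$. Then
\[
b^2=(\theta_1+\theta_2)^2=\theta_2^2(1+\zeta)^2=\frac{c}{\zeta}(1+\zeta^2)=c(\zeta+\zeta^{-1}),
\]
and $b$ itself is uniquely determined by $b^2$, again because squaring is injective in characteristic $2$.

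\emph{Step 3 (counting).} Consider the map $\zeta\mapsto\zeta+\zeta^{-1}$ on the primitive $n$-th roots of unity. An equality $\zeta+\zeta^{-1}=\eta+\eta^{-1}$ means $\eta$ is a root of $X^2-(\zeta+\zeta^{-1})X+1$, so $\eta\in\{\zeta,\zeta^{-1}\}$. Because $n$ is odd and $n>1$, we have $\zeta\neq\zeta^{-1}$. So the map is exactly $2$-to-$1$ and yields $\varphi(n)/2$ distinct values of $b^2$, hence $\varphi(n)/2$ distinct roots $b$. We now have $\varphi(n)/2$ distinct roots, each of even multiplicity, with multiplicities summing to $\varphi(n)$. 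Therefore each multiplicity is exactly $2$.

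The only step needing care is Step 2. There we must check that Lemma~\ref{lem 10} applies to roots $b$ lying in the algebraic closure, not just in $\mathbb F_q$. This is harmless, since its proof only uses $c\neq0$ and $\gcd(n,p)=1$. Primitivity of $c$ is not actually needed beyond $c\neq 0$.
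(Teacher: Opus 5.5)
Your proof is correct, and it is more complete than the paper's. The paper's proof is only your Step~1. It writes $\Lambda_n(B,c)=\sum_i a_iB^{2i}$, takes $\Lambda_n'(B,c)=\sum_i a_i^{2^{e-1}}B^i$, notes $\Lambda_n'(B,c)^2=\Lambda_n(B,c)$, and declares the conclusion immediate. On its own, that shows only that every root has \emph{even} multiplicity. Multiplicity exactly $2$ also requires $\Lambda_n'(B,c)$ to be square-free, which the paper does not check.

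Your Steps~2--3 supply exactly this missing count. The $\varphi(n)/2$ distinct values $b=\sqrt{c(\zeta+\zeta^{-1})}$ are roots, and since all multiplicities are even and sum to $\varphi(n)$, each equals $2$. The paper does get square-freeness for $n=q+1$ in Theorem~\ref{thm:char2-optimal}. But it does so via the count $N_{\mathrm{prim}}(q,c)=\varphi(q+1)/2$ of Lemma~\ref{lem 7}, which is special to that $n$ and to primitive $c$. Your argument covers every odd $n>1$ and every $c\neq 0$.

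One expository point: the count relies on the \emph{existence} direction, namely that each primitive $n$-th root $\zeta$ really produces a root. Your displayed computation is instead phrased as the forcing direction. State it as a construction:
\begin{itemize}
\item pick $\theta_2$ with $\theta_2^2=c/\zeta$;
\item set $\theta_1=\zeta\theta_2$ and $b=\theta_1+\theta_2$;
\item then $X^2+bX+c=(X-\theta_1)(X-\theta_2)$, and the ``if'' direction of Lemma~\ref{lem 10} gives $\Lambda_n(b,c)=0$.
\end{itemize}
Once this is in place, the forward direction is redundant. The $\varphi(n)/2$ distinct roots, each of multiplicity at least $2$, already exhaust the degree $\varphi(n)$.

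A shortcut bypasses Lemmas~\ref{lem:char2-even-B} and~\ref{lem 10} entirely. Pairing $\zeta$ with $\zeta^{-1}$ gives $(\alpha-\zeta\beta)(\alpha-\zeta^{-1}\beta)=B^2-(2+\zeta+\zeta^{-1})C$. So in characteristic $2$ one has $\Lambda_n(B,c)=\prod_{\{\zeta,\zeta^{-1}\}}\bigl(B+\sqrt{c(\zeta+\zeta^{-1})}\bigr)^2$, and your Step~3 injectivity finishes the proof.
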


\begin{proof}
	Since $\Lambda_{n}(B,C) \in \mathbb{F}_{q}[B^2,C]$, then for any primitive element $c$ of $\mathbb{F}_{q}$, $\Lambda_{n}(B,c) \in \mathbb{F}_{q}[B^2]$. We write $\Lambda_{n}(B,c)$ as
	$$\Lambda_{n}(B,c) = \sum_{i}a_{i}B^{2i}.$$
	As $q$ is a power of $2$, say, $q = 2^e$, setting
	$$\Lambda_{n}^{\prime}(B,c) = \sum_{i}a^{2^{e-1}}B^i \in \mathbb{F}_{q}[B],$$
	then one has
	$$\Lambda_{n}^{\prime}(B,c)^{2} = \sum_{i}a_{i}^{q}B^{2i} = \sum_{i}a_{i}B^{2i} = \Lambda_{n}(B,c).$$
	The conclusion follows immediately.
\end{proof}

Intrinsically, to obtain an optimal determining polynomial for primitive quadratic polynomials over $\mathbb{F}_{q}$, one needs to take a square root of $\Lambda_{N}(B,C)$. However, in general, this cannot be done in $\mathbb{F}_{q}[B,C]$. Instead, we take a square root of $\Lambda_{N}(B,C)$ in the quotient ring
$$A[B] = \mathbb{F}_{q}[B,C]/(\Phi_{q-1}(C)),$$
where $A = \mathbb{F}_{q}[C]/(\Phi_{q-1}(C))$.

The Frobenius map
\[
\operatorname{Fr}:A\longrightarrow A,
\qquad
a\longmapsto a^2,
\]
is an automorphism. Its inverse is given by
\[
\operatorname{Fr}^{-1}(a)=a^{\frac{q}{2}}.
\]
Indeed, for every $a\in A$,
\[
\left(a^{\frac{q}{2}}\right)^2=a^q=a.
\]

\begin{definition}
	\label{def:inverse-Frobenius-reduction}
	Define the inverse-Frobenius operator 
	\[
	\operatorname{Fr}^{-1}:A[B^2]\longrightarrow A[B]
	\]
	by $\operatorname{Fr}^{-1}(B^2) = B$, and $\operatorname{Fr}^{-1}(a) = a^\frac{q}{2}$ for any $a \in A$. Precisely, if
	\[
	F(B,C)=\sum_{i,j}a_{ij}B^{2i}C^j
	\in\mathbb F_q[B^2,C],
	\]
	then, modulo $\Phi_{q-1}(C)$,
	\[
	\operatorname{Fr}^{-1}(F)(B,C)
	=
	\sum_{i,j}
	a_{ij}^{\frac{q}{2}}B^iC^{\frac{jq}{2}}.
	\]
\end{definition}

The operator $\operatorname{Fr}^{-1}$ provides a canonical square root over the coefficient ring $A$.

\begin{lemma}
	\label{lem:S-square}
	For every $F\in A[B^2]$, one has
	\[
	\operatorname{Fr}^{-1}(F)^2=F
	\]
	in $A[B]$.
	Consequently, if $c\in\mathbb F_q^*$ is primitive, then
	\[
	\operatorname{Fr}^{-1}(F)(B,c)^2=F(B,c).
	\]
\end{lemma}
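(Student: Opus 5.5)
The plan is to use that $A[B]$ is a commutative ring of characteristic $2$, so squaring is a ring endomorphism of $A[B]$. For $F=\sum_i a_iB^{2i}\in A[B^2]$ with $a_i\in A$, Definition \ref{def:inverse-Frobenius-reduction} gives $\operatorname{Fr}^{-1}(F)=\sum_i a_i^{q/2}B^i$. Applying the freshman's dream to the finite sum yields
\[
\operatorname{Fr}^{-1}(F)^2=\sum_i \bigl(a_i^{q/2}\bigr)^2B^{2i}=\sum_i a_i^{q}B^{2i}.
\]
The crux is then to show $a^q=a$ for every $a\in A$. Once that holds, the right-hand side collapses to $\sum_i a_iB^{2i}=F$.

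For $a^q=a$, I will first check that the map $a\mapsto a^q$ on $A=\mathbb F_q[C]/(\Phi_{q-1}(C))$ is a ring endomorphism, since $q$ is a power of the characteristic. It fixes $\mathbb F_q$ pointwise, so it suffices to check it on the generator, the class of $C$; that is, to show $C^q\equiv C \pmod{\Phi_{q-1}(C)}$. This holds because $\Phi_{q-1}(C)\mid C^{q-1}-1$, hence $\Phi_{q-1}(C)\mid C^q-C$. This single divisibility is the only genuinely nontrivial input, and it also justifies the assertion $(a^{q/2})^2=a$ made just before the definition. I also need $\operatorname{Fr}^{-1}$ to be well defined on $A[B^2]$, i.e. to respect the relation $\Phi_{q-1}(C)=0$. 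This again reduces to $x\mapsto x^{q/2}$ being a ring endomorphism of $A$, so it is automatic when $\operatorname{Fr}^{-1}$ is defined coefficientwise on $A$.

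For the consequence, let $c\in\mathbb F_q^*$ be primitive. Then $c$ is a root of $\Phi_{q-1}$, so evaluation $C\mapsto c$ gives a well-defined ring homomorphism $A[B]\to\mathbb F_q[B]$. Applying this homomorphism to the identity $\operatorname{Fr}^{-1}(F)^2=F$ in $A[B]$ gives $\operatorname{Fr}^{-1}(F)(B,c)^2=F(B,c)$.

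I expect no real obstacle. The only point needing care is to work consistently in the quotient ring: $a^q=a$ fails in $\mathbb F_q[C]$ itself and holds only modulo $\Phi_{q-1}(C)$.
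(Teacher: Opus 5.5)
Your proposal is correct and follows the paper's proof. You write $F=\sum_i a_iB^{2i}$, square $\sum_i a_i^{q/2}B^i$ termwise using characteristic $2$, collapse the result via $a^q=a$ in $A$, and get the specialization by evaluating at a root $c$ of $\Phi_{q-1}$. The only addition is that you justify $a^q=a$ in $A$ (via $\Phi_{q-1}(C)\mid C^{q}-C$ and the fact that the $q$-power map is an $\mathbb{F}_q$-algebra endomorphism of $A$), whereas the paper merely asserts it just before defining $\operatorname{Fr}^{-1}$.
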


\begin{proof}
	Write
	\[
	F=\sum_i a_iB^{2i},
	\quad a_i\in A.
	\]
	Then
	\[
	\operatorname{Fr}^{-1}(F)
	=
	\sum_i a_i^{\frac{q}{2}}B^i.
	\]
	Since the characteristic is $2$,
	\[
	\begin{aligned}
		\operatorname{Fr}^{-1}(F)^2
		&=
		\sum_i
		\left(a_i^{\frac{q}{2}}\right)^2B^{2i}  \\
		&=
		\sum_i a_i^qB^{2i}
		=
		\sum_i a_iB^{2i}
		=
		F.
	\end{aligned}
	\]
	The specialization statement follows immediately.
\end{proof}

Recall that $N=q+1$ is odd, then $\Lambda_{N}(B,C) \in \mathbb{F}_{q}[B^2,C]$. Denote by $\overline{\Lambda}_{N}(B,C) \in A[B]$ the image of $\Lambda_{N}(B,C)$ under the canonical projection
$$\mathbb{F}_{q}[B,C] \rightarrow \mathbb{F}_{q}[B,C]/(\Phi_{q-1}(C)) = A[B].$$
Clearly $\overline{\Lambda}_{N}(B,C)$ lies in $A[B^2]$. Define $P_{q}(B,C)$ be a preimage of $\operatorname{Fr}^{-1}(\overline{\Lambda}_{N}(B,C))$ in $\mathbb{F}_{q}[B,C]$. If $\Lambda_{N}(B,C) = \sum\limits_{i,j}a_{ij}B^{2i}C^{j}$, then $P_{q}(B,C)$ can be chosen to be 
$$P_{q}(B,C) = \sum_{i,j}a_{ij}^{\frac{q}{2}}B^{i}C^{\frac{jq}{2}}.$$

\begin{theorem}
	\label{thm:char2-optimal}
	Let $q$ be a power of $2$ and let $N=q+1$. For every primitive element $c$ of $\mathbb F_q$, the polynomial $P_q(B,c)\in\mathbb F_q[B]$ is monic and square-free, and its roots are exactly the elements
	$b\in\mathbb F_q$ for which 
	$$X^2+bX+c$$ 
	is primitive over $\mathbb F_q$. In particular, $P_q(B,C)$ is an optimal determining polynomial for
	primitive quadratic polynomials over $\mathbb F_q$.
\end{theorem}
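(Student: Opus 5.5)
The plan is to mirror the proof of Theorem \ref{thm 2}, using the square-root identity of Lemma \ref{lem:S-square}. Fix a primitive $c\in\mathbb F_q$. Specializing $C=c$ factors through $A=\mathbb F_q[C]/(\Phi_{q-1}(C))$, since $\Phi_{q-1}(c)=0$. Lemma \ref{lem:S-square} then gives
\[
P_q(B,c)^2=\Lambda_N(B,c)
\]
in $\mathbb F_q[B]$. Consequently, the roots of $P_q(B,c)$ in $\overline{\mathbb F}_q$ are exactly the roots of $\Lambda_N(B,c)$, with every multiplicity halved.

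The first step identifies the roots of $\Lambda_N(B,c)$ lying in $\mathbb F_q$. Since $N=q+1$ is odd, it is coprime to $q$, so Lemma \ref{lem 10} applies: for $b\in\mathbb F_q$ with roots $\theta_1,\theta_2$ of $X^2+bX+c$, we have $\Lambda_N(b,c)=0$ if and only if $\operatorname{ord}(\theta_1/\theta_2)=q+1$.

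If $X^2+bX+c$ were reducible, then $\theta_1/\theta_2\in\mathbb F_q^*$. This would force $q+1\mid q-1$, which is impossible. Hence such $b$ give irreducible polynomials, and $\theta_1/\theta_2=\theta^{1-q}$, whose order equals the binomial order, as in the proof of Theorem \ref{thm 2}.

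Conversely, if $f$ is irreducible and primitive, then Lemma \ref{lem 4} gives binomial order $q+1$. By Proposition \ref{prop 1} (with $m=0$ in characteristic $2$, which its proof allows because $\gcd(q+1,q-1)=1$), together with Lemma \ref{lem 1}, for irreducible $f$ with primitive $c$ we get: $f$ is primitive if and only if $\operatorname{ord}(\theta^{q-1})=q+1$. Thus the roots of $\Lambda_N(B,c)$ in $\mathbb F_q$ are exactly $\mathcal P_c$, which by Lemma \ref{lem 7} has $\varphi(N)/2$ elements.

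The second step is a degree count. By Proposition \ref{lem:B-degree-leading-coefficient}, $\Lambda_N$ has leading $B$-term $B^{\varphi(N)}$, the sign being irrelevant in characteristic $2$. The leading coefficient is $1$, so no degree drop occurs under specialization, and $\Lambda_N(B,c)$ is monic of degree $\varphi(N)$. Since $\operatorname{Fr}^{-1}(1)=1$, the polynomial $P_q(B,c)$ is monic of degree $\varphi(N)/2$.

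Each $b\in\mathcal P_c$ is a root of $\Lambda_N(B,c)$, hence of $P_q(B,c)$. This gives $\varphi(N)/2$ distinct roots in $\mathbb F_q$ of a monic polynomial of degree $\varphi(N)/2$. Therefore
\[
P_q(B,c)=\prod_{b\in\mathcal P_c}(B-b),
\]
which is square-free with root set exactly $\mathcal P_c$. This yields optimality in the sense of \eqref{eq 11}.

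The main obstacle is the careful justification that specialization at $c$ commutes with $\operatorname{Fr}^{-1}$. The point is that the evaluation $A\to\mathbb F_q$, $C\mapsto c$, is a ring homomorphism compatible with Frobenius, since $x\mapsto x^{q/2}$ is the inverse Frobenius on both rings. The statement of Lemma \ref{lem:S-square} already records the consequence needed here. The remaining care point is the binomial-order characterization in characteristic $2$, which I will handle directly with Lemma \ref{lem 1}(2): the binomial order is $n/\gcd(n,q-1)$, so $n=q^2-1$ gives $q+1$, and conversely $\theta^{q-1}$ of order $q+1$ together with $\theta^{q+1}=c$ primitive gives $\operatorname{ord}\theta=q^2-1$ because $\gcd(q+1,q-1)=1$.
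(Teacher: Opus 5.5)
Your proposal is correct and follows the paper's proof essentially step for step. You use the square-root identity $P_q(B,c)^2=\Lambda_N(B,c)$ from Lemma \ref{lem:S-square}, identify the roots in $\mathbb F_q$ via Lemma \ref{lem 10} and the binomial-order characterization of primitivity, and compare the degree $\varphi(N)/2$ with the count in Lemma \ref{lem 7} to get monicity and square-freeness. Your remark that Proposition \ref{prop 1} must be applied with $m=0$ in characteristic $2$, which is justified by $\gcd(q+1,q-1)=1$, adds care on a hypothesis that the paper's proof glosses over.
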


\begin{proof}
	By Lemma~\ref{lem:S-square},
	\[
	P_q(B,C)^2=\Lambda_N(B,C) \in A[B].
	\]
	Thus, for every primitive element $c\in\mathbb F_q$,
	\[
	P_q(B,c)^2=\Lambda_N(B,c).
	\]
	
	Let $b\in\mathbb F_q$ be a root of $P_q(B,c)$. Then
	\[
	\Lambda_N(b,c)=0.
	\]
	Let $\theta_1$ and $\theta_2$ be the roots of
	\[
	f(X)=X^2+bX+c.
	\]
	Since $N=q+1$ is coprime to $q$, Lemma~4.1 gives
	\[
	\operatorname{ord}
	\left(\frac{\theta_1}{\theta_2}\right)
	=
	N=q+1.
	\]
	In particular, $f(X)$ is irreducible. Indeed, if both roots belonged
	to $\mathbb F_q$, then $\frac{\theta_1}{\theta_2}\in\mathbb F_q^*$ would have order dividing $q-1$, which contradicts with
	\[
	\gcd(q-1,q+1)=1.
	\]
	Thus $f(X)$ is irreducible, and its binomial order is $q+1$.
	Since $c$ is primitive in $\mathbb F_q$, Proposition~3.5 implies
	that $f(X)$ is primitive.
	
	Conversely, suppose that
	\[
	f(X)=X^2+bX+c
	\]
	is primitive. Then its binomial order is $q+1=N$. If its two roots
	are $\theta$ and $\theta^q$, then
	\[
	\operatorname{ord}
	\left(\frac{\theta}{\theta^q}\right)
	=N.
	\]
	By Lemma~4.1,
	\[
	\Lambda_N(b,c)=0.
	\]
	Since
	\[
	P_q(B,c)^2=\Lambda_N(B,c),
	\]
	it follows that
	\[
	P_q(b,c)=0.
	\]
	Hence the roots of $P_q(B,c)$ are exactly the coefficients $b$
	for which $X^2+bX+c$ is primitive.
	
	By Lemma~2.10,
	\[
	\deg_B\Lambda_N=\varphi(N).
	\]
	Since
	\[
	P_q(B,c)^2=\Lambda_N(B,c),
	\]
	we have
	\[
	\deg_BP_q(B,c)=\frac{\varphi(N)}2.
	\]
	On the other hand, by Lemma~3.2 there are exactly $\frac{\varphi(N)}2$ elements $b\in\mathbb F_q$ for which $X^2+bX+c$ is primitive.
	Therefore all roots of $P_q(B,c)$ are distinct, and
	$P_q(B,c)$ is square-free.
	
	Finally, since $\Lambda_N(B,C)$ is monic in $B$, its inverse-Frobenius
	reduction is also monic in $B$. Hence $P_q(B,c)$ is monic, and the
	assertion follows.
\end{proof}

Parallel to Proposition \ref{prop:Lucas-polynomial-criterion}, we also present a criterion for primitive quadratic polynomials which
does not require the direct computation of $\Lambda_N(B,C)$.

For $1\leq i\leq s$, the Lucas atomic decomposition gives
\[
\mathcal{U}_{\frac{N}{p_{i}}}(B,C)
=
\prod_{\substack{d\mid \frac{N}{p_{i}}\\d>1}}
\Lambda_d(B,C).
\]
Since every divisor of $N$ is odd, all these polynomials lie in
$\mathbb F_q[B^2,C]$. Define
\[
E_q(B,C)
=
\operatorname{Fr}^{-1}(
\operatorname{lcm}
(\mathcal{U}_{\frac{N}{p_{1}}},\ldots,
\mathcal{U}_{\frac{N}{p_{s}}})).
\]

\begin{proposition}
	\label{prop:char2-practical}
	Let $f(X)=X^2+bX+c\in\mathbb F_q[X]$ be irreducible, where $c$ is
	a primitive element of $\mathbb F_q$. Then $f(X)$ is primitive if
	and only if
	\[
	E_q(b,c)\neq0.
	\]
\end{proposition}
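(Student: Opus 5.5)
We follow the pattern of the proof of Proposition \ref{prop:Lucas-polynomial-criterion}: first identify the least common multiple as a product of Lucas atoms, then compare it with $\mathcal{U}_N/\Lambda_N$, and finally pass through $\operatorname{Fr}^{-1}$ and specialize at a primitive $c$.

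\textbf{Step 1: the lcm as a product of atoms.} By Theorem \ref{thm 1},
\[
\mathcal{U}_{N/p_i}=\prod_{d\mid N/p_i,\ d>1}\Lambda_d .
\]
Every divisor of $N$ is odd, hence prime to $p=2$. By Lemma \ref{lem:coprime-Lucas-atoms} the atoms $\Lambda_d$ with $1<d\mid N$ are therefore pairwise coprime. The lcm/divisibility argument from the odd case then gives, up to normalization,
\[
L:=\operatorname{lcm}(\mathcal{U}_{N/p_1},\dots,\mathcal{U}_{N/p_s})=\prod_{d\in\Omega}\Lambda_d,
\qquad
\Omega=\{d: 1<d\mid N,\ d\neq N\}.
\]
The reason is that a proper divisor of $N$ divides some $N/p_i$, while $N$ itself divides none of them. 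Consequently $L=\mathcal{U}_N/\Lambda_N$. Each $\Lambda_d$ with $d$ odd is monic in $B$, since $\varphi(d)$ is even for $d\ge3$ (Proposition \ref{lem:B-degree-leading-coefficient}). So $L$ is monic, lies in $\mathbb F_q[B^2,C]$ by Lemma \ref{lem:char2-even-B}, and no normalization ambiguity arises.

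\textbf{Step 2: passing through $\operatorname{Fr}^{-1}$.} By Lemma \ref{lem:S-square}, $E_q^2=\overline L$ in $A[B]$. Specializing at a primitive $c$ gives $E_q(b,c)^2=L(b,c)$, so
\[
E_q(b,c)=0 \iff L(b,c)=0 \iff \Lambda_d(b,c)=0 \text{ for some proper divisor } 1<d\mid N,\ d\neq N .
\]
The specialization is legitimate because evaluation at $C=c$ factors through $A$, as $\Phi_{q-1}(c)=0$.

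\textbf{Step 3: interpreting the vanishing.} Let $f$ be irreducible with roots $\theta,\theta^q$, and put $r=\operatorname{ord}(\theta^{1-q})$, which is the binomial order (proof of Lemma \ref{lem 9}). Then $r\mid N$. Since $r>1$ (otherwise $\theta\in\mathbb F_q$), Lemma \ref{lem 10} shows that $\Lambda_d(b,c)=0$ for $d\mid N$, $d>1$, exactly when $d=r$. Hence $E_q(b,c)=0$ if and only if $r\ne N$. By Proposition \ref{prop 1} (or Lemma \ref{lem 4}, since $c$ is primitive) this happens if and only if $f$ is not primitive, which is the claim.

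\textbf{Main obstacle.} The delicate point is Step 1. We must justify that the lcm in $\mathbb F_q[C][B]$ is well defined and equals the product of atoms. This uses the pairwise coprimality of the atoms in the UFD $\mathbb F_q[B,C]$, which Lemma \ref{lem:coprime-Lucas-atoms} gives because all indices are odd. We must also ensure the lcm lands in $\mathbb F_q[B^2,C]$ so that $\operatorname{Fr}^{-1}$ applies. Once $L=\mathcal{U}_N/\Lambda_N$ is identified as a product of atoms, each in $\mathbb F_q[B^2,C]$, that second point is immediate.
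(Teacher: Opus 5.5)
Your argument is correct. Step 1 is the paper's own first step, identifying the lcm with $\prod_{d\mid N,\,1<d<N}\Lambda_d=\mathcal{U}_N/\Lambda_N$. The vanishing criterion is then obtained differently.

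The paper argues with the Lucas polynomials directly. It uses that $\operatorname{ord}_{\mathrm b}(f)$ is a proper divisor of $N$ iff it divides some $N/p_i$, iff $\mathcal{U}_{N/p_i}(b,c)=0$ for some $i$. This rests on $U_n=0\iff\operatorname{ord}_{\mathrm b}(f)\mid n$, which comes from the Binet formula. The paper then passes between this and $E_q(b,c)=0$ without comment.

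You instead work atom by atom. You make the squaring relation $E_q(b,c)^2=L(b,c)$ explicit and reduce to the vanishing of some $\Lambda_d(b,c)$ with $d$ a proper divisor $>1$ of $N$. Lemma \ref{lem 10} then shows the only atom that vanishes is $\Lambda_r$, with $r$ the binomial order. This is exactly the mechanism of the odd-characteristic proofs of Theorem \ref{thm 2} and Proposition \ref{prop:Lucas-polynomial-criterion}.

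What each route buys: yours genuinely uses the atomic form of the lcm and supplies the $\operatorname{Fr}^{-1}$ step that the paper skips. The paper's is slightly more economical. The equivalence ``the lcm vanishes at $(b,c)$ iff some $\mathcal{U}_{N/p_i}(b,c)=0$'' holds for any lcm, since it divides the product of the $\mathcal{U}_{N/p_i}$ and is divisible by each of them. So there the atomic description is needed only to know that the lcm lies in $\mathbb F_q[B^2,C]$, so that $\operatorname{Fr}^{-1}$ applies.

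A minor citation point: Proposition \ref{prop 1} is stated under the assumption $m>0$, which fails when $q$ is even. Lemma \ref{lem 4}, which you also mention, is the appropriate reference in characteristic $2$.
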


\begin{proof}
	By the Lucas atomic decomposition,
	\[
	\mathcal{U}_{\frac{N}{p_{i}}}(B,C)
	=
	\prod_{\substack{d\mid \frac{N}{p_{i}}\\d>1}}
	\Lambda_d(B,C).
	\]
	Hence
	\[
	\operatorname{lcm}
	(\mathcal{U}_{\frac{N}{p_{1}}},\ldots,\mathcal{U}_{\frac{N}{p_{s}}})
	=
	\prod_{\substack{d\mid N\\1<d<N}}
	\Lambda_d(B,C).
	\]
	Indeed, every proper divisor $d$ of $N$ divides $\frac{N}{p_{i}}$ for at
	least one prime divisor $p_i$ of $N$. Therefore, by the atomic
	decomposition of $\mathcal{U}_N$,
	\[
	\operatorname{lcm}
	(\mathcal{U}_{\frac{N}{p_{1}}},\ldots,\mathcal{U}_{\frac{N}{p_{s}}})
	=
	\frac{\mathcal{U}_N(B,C)}{\Lambda_N(B,C)}.
	\]
	
	Now let $f(X)=X^2+bX+c$ be irreducible. By Proposition~3.5, its
	binomial order is a divisor of $N$, and $f$ is primitive if and only
	if $\operatorname{ord}_b(f)=N$. If $\operatorname{ord}_b(f)<N$, then
	\[
	\operatorname{ord}_b(f)\mid\frac{N}{p_i}
	\]
	for some prime divisor $p_i$ of $N$. Equivalently,
	\[
	\mathcal{U}_{\frac{N}{p_{i}}}(b,c)=0
	\]
	for some $i$, and hence
	\[
	E_q(b,c)=0.
	\]
	Conversely, if $E_q(b,c)=0$, then
	\[
	\mathcal{U}_{\frac{N}{p_{i}}}(b,c)=0
	\]
	for some $i$, so the binomial order of $f$ is a proper divisor of
	$N$. Thus $f$ is not primitive.
\end{proof}

\subsection{Uniqueness of the optimal determining polynomial}
We conclude this section by discussing the uniqueness of the optimal
determining polynomial. There are two natural levels of uniqueness.
First, after the constant term $C$ is specialized to a fixed primitive
element $c\in\mathbb F_q$, the optimal determining polynomial is
uniquely determined. On the other hand, as a polynomial in the two
indeterminates $B$ and $C$, an optimal determining polynomial need not
be unique. Nevertheless, its class in the quotient ring 
$$A[B] = \mathbb{F}_{q}[B,C]/(\Phi_{q-1}(C)),$$
where $A = \mathbb{F}_{q}[C]/(\Phi_{q-1}(C))$, is unique.

For any primitive element $c$ of $\mathbb{F}_{q}$, denote
$$\mathcal{P}_{c} = \{b \in \mathbb{F}_{q} \ | \ X^{2}+bX+c \ \mathrm{is} \ \mathrm{primitive}\}.$$
The order of $\mathcal{P}_{c}$ is exactly $\mathrm{N}_{\mathrm{prim}}(q,c)$, which by Lemma \ref{lem 7} is
\[
\mathrm{N}_{\mathrm{prim}}(q,c)
=
\begin{cases}
	\varphi(q+1), & q\text{ odd},\\[4pt]
	\dfrac{\varphi(q+1)}{2}, & q\text{ even}.
\end{cases}
\]

The first uniqueness statement is immediate from the definition of an
optimal determining polynomial.

\begin{proposition}\label{prop 3}
	\label{prop:fiberwise-uniqueness}
	Let $P_{q}(B,C) \in \mathbb{F}_{q}[B,C]$ be an optimal determining polynomial for primitive quadratic polynomials over $\mathbb{F}_{q}$. Then for any primitive element $c$ of $\mathbb{F}_{q}$,
	$$P_{q}(B,c) = \prod_{b \in \mathcal{P}_{c}}(B-b).$$
\end{proposition}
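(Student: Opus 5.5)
The plan is to unwind the definition of an optimal determining polynomial from the opening of Section~\ref{sec: criterion}. For every primitive $c$, it requires three things of $P_q(B,c)\in\mathbb F_q[B]$: it is monic, it is square-free, and its roots are exactly the coefficients $b$ with $X^2+bX+c$ primitive. I read ``roots'' here as roots in an algebraic closure $\overline{\mathbb F}_q$, not merely in $\mathbb F_q$. The optimality requirement says the roots are in one-to-one correspondence with $\mathcal P_c$, so no root can lie outside $\mathcal P_c\subseteq\mathbb F_q$. I will make this reading explicit, since it is what distinguishes optimality from a mere vanishing criterion.

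First, fix a primitive $c$ and put $g(B)=P_q(B,c)$. Since $g$ is monic and $\overline{\mathbb F}_q$ is algebraically closed, $g$ splits as $\prod_{i}(B-\beta_i)$ over $\overline{\mathbb F}_q$, where the $\beta_i$ are listed with multiplicity. Square-freeness, equivalently the absence of repeated roots over a perfect field, forces the $\beta_i$ to be pairwise distinct.

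Second, the root condition says $\{\beta_i\}=\mathcal P_c$ as sets. Combined with distinctness, this gives $\deg g=\#\mathcal P_c=\mathrm N_{\mathrm{prim}}(q,c)$ and $g=\prod_{b\in\mathcal P_c}(B-b)$, which is the claim. As a cross-check, in both the odd case (Theorem~\ref{thm 2}) and the even case (Theorem~\ref{thm:char2-optimal}), the degree $\varphi(N)$, respectively $\varphi(N)/2$, matches Lemma~\ref{lem 7}. This confirms that the constructed $P_q$ are consistent with the formula.

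The only real obstacle is the interpretive point about where the roots lie. If one only knew the $\mathbb F_q$-roots of $g$, the factorization could fail: $g$ could have an extra irreducible factor with no $\mathbb F_q$-roots. So I will state explicitly that the definition's ``roots in one-to-one correspondence with $\mathcal P_c$'' refers to all roots in $\overline{\mathbb F}_q$, equivalently that $g$ splits over $\mathbb F_q$ with root set $\mathcal P_c$. Under this reading, the proof is the two-line factorization argument above.
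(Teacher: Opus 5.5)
Your proposal is correct and matches the paper, which gives no separate argument and simply declares the statement immediate from the definition of an optimal determining polynomial. Your reading of ``roots'' as all roots in $\overline{\mathbb F}_q$ is exactly what the introduction's defining identity \eqref{eq 11} builds in. Your caution is well placed, because with only $\mathbb F_q$-roots an extra irreducible factor of degree at least $2$ could occur. Spelling out the monic, separable and splitting argument is therefore all the content the statement needs.
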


Proposition \ref{prop 3} says that an optimal determining polynomial is unique on every fiber
corresponding to a primitive value of $C$. We now formulate the
corresponding global uniqueness statement.

Since $\Phi_{q-1}(C)$ splits completely over
$\mathbb F_q$, its roots are precisely the primitive elements of
$\mathbb F_q$. We consider the affine $\mathbb F_q$-scheme
\[
\mathcal X
=
\mathbb A^1_{\mathbb F_q}
\times
\operatorname{Spec}A
=
\operatorname{Spec}A[B].
\]
Its $\mathbb F_q$-rational points are exactly
\[
\mathcal X(\mathbb F_q)
=
\left\{
(b,c):
b\in\mathbb F_q,\;
c\text{ is primitive in }\mathbb F_q
\right\}.
\]
And $A[B]$ is the affine coordinate ring of the scheme $\mathcal{X}$, whose elements can be viewed as regular functions on $\mathcal{X}$. This explains why the uniqueness of the optimal determining polynomials only being valid in $A[B]$ is natural.

\begin{theorem}
	\label{thm:global-uniqueness}
	Let $P_q(B,C),P'_q(B,C)\in\mathbb F_q[B,C]$ be two optimal
	determining polynomials for primitive quadratic polynomials over
	$\mathbb F_q$. Then
	\[
	P_q(B,C)\equiv P'_q(B,C)
	\pmod{\Phi_{q-1}(C)}.
	\]
	Equivalently, they have the same images in the quotient ring $A[B]$. Consequently, the optimal determining polynomial determines a unique
	regular function on $\mathcal X$.
\end{theorem}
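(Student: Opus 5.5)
The plan is to consider the difference $D(B,C)=P_q(B,C)-P'_q(B,C)\in\mathbb F_q[B,C]$ and show that it is divisible by $\Phi_{q-1}(C)$. By Proposition \ref{prop:fiberwise-uniqueness}, for every primitive element $c$ of $\mathbb F_q$ we have
\[
P_q(B,c)=\prod_{b\in\mathcal P_c}(B-b)=P'_q(B,c)
\]
as polynomials in $\mathbb F_q[B]$. Hence $D(B,c)=0$ identically in $B$ for every primitive $c$.

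Write $D(B,C)=\sum_{i}D_i(C)B^i$ with $D_i(C)\in\mathbb F_q[C]$. The vanishing of $D(B,c)$ as a polynomial in $B$ means $D_i(c)=0$ for every $i$ and every primitive $c$. The key input is that $\Phi_{q-1}(C)$ splits over $\mathbb F_q$ into distinct linear factors,
\[
\Phi_{q-1}(C)=\prod_{c\ \mathrm{primitive}}(C-c).
\]
This holds because $C^{q-1}-1=\prod_{a\in\mathbb F_q^*}(C-a)$ is separable and $\Phi_{q-1}$ collects exactly the factors whose roots have order $q-1$. Since the linear factors $C-c$ are pairwise coprime and each divides $D_i(C)$, their product $\Phi_{q-1}(C)$ divides each $D_i(C)$. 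Therefore $\Phi_{q-1}(C)\mid D(B,C)$, which gives $P_q\equiv P'_q\pmod{\Phi_{q-1}(C)}$, i.e.\ equality of the images in $A[B]=\mathbb F_q[B,C]/(\Phi_{q-1}(C))$.

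For the final sentence, observe that $A\cong\prod_{c}\mathbb F_q[C]/(C-c)\cong\mathbb F_q^{\varphi(q-1)}$ by the Chinese remainder theorem. Hence $A[B]\cong\prod_c\mathbb F_q[B]$, where the map is specialization at each primitive $c$. An element of $A[B]$ is therefore determined by its fibers, and the common image is the regular function on $\mathcal X$ whose restriction to each fiber is $\prod_{b\in\mathcal P_c}(B-b)$.

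There is no serious obstacle. The only point needing care is the separability of $\Phi_{q-1}$ over $\mathbb F_q$, which holds since $\gcd(q-1,p)=1$, together with the coefficientwise reduction from ``vanishes at every primitive $c$'' to divisibility.
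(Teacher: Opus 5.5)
Your proof is correct and follows essentially the same route as the paper: fiberwise equality $P_q(B,c)=P'_q(B,c)$ at every primitive $c$, vanishing of each $C$-coefficient of the difference at all primitive $c$, and divisibility by the separable polynomial $\Phi_{q-1}(C)$, which splits over $\mathbb F_q$ with roots exactly the primitive elements. Your Chinese-remainder identification $A[B]\cong\prod_c \mathbb F_q[B]$ makes the paper's closing sentence about a unique regular function on $\mathcal X$ more precise, but it does not change the argument.
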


\begin{proof}
	By Proposition~\ref{prop:fiberwise-uniqueness}, for every primitive
	element $c\in\mathbb F_q$ one has
	\[
	P_q(B,c)=P'_q(B,c)
	\]
	in $\mathbb F_q[B]$.
	
	Write
	\[
	P_q(B,C)-P'_q(B,C)
	=
	\sum_{j=0}^d a_j(C)B^j,
	\qquad
	a_j(C)\in\mathbb F_q[C].
	\]
	For every primitive element $c\in\mathbb F_q$, the identity
	\[
	P_q(B,c)-P'_q(B,c)=0
	\]
	in $\mathbb F_q[B]$ implies
	\[
	a_j(c)=0
	\]
	for every $j$.
	
	The primitive elements of $\mathbb F_q$ are precisely the roots of
	$\Phi_{q-1}(C)$. Since $\Phi_{q-1}(C)$ is separable, it follows that
	\[
	\Phi_{q-1}(C)\mid a_j(C)
	\]
	for every $j$. Therefore
	\[
	P_q(B,C)-P'_q(B,C)
	\in
	(\Phi_{q-1}(C)),
	\]
	and hence
	\[
	P_q(B,C)\equiv P'_q(B,C)
	\pmod{\Phi_{q-1}(C)}.
	\]
	This proves the assertion.
\end{proof}

\section{A first-zero coefficient approach to binomial order and order}\label{sec 5}
Let $f(X) = X^{2}+bX+c \in \mathbb{F}_{q}[X]$ be an irreducible polynomial, and $\theta$ be a root of $f(X)$ lying in $\mathbb{F}_{q^2}$. Since $f(X)$ is irreducible, another root of $f(X)$ is $\theta^{q}$, and Vieta's formula gives
$$\theta^{q+1} = \theta\theta^q = c.$$
As $f(X)$ is the minimal polynomial of $\theta$ over $\mathbb{F}_{q}$, one has
$$f(X) \mid X^{q+1}-c.$$

We denote by $H_{f}(X)$ the quotient
$$H_f(X) =\frac{X^{q+2}-cX}{f(X)} =h_1X^q+h_2X^{q-1}+\cdots+h_qX+h_{q+1}.$$
As $f(X) \mid X^{q+1}-c$, clearly $h_{q+1} = 0$.

The main theorem of this section provides an approach to the binomial order and the order of $f(X)$ via the coefficients $h_{1},h_{2},\cdots,h_{q+1}$ of $H_{f}(X)$, which generalizes Theorem $4$ in \cite{VegaNecessary}. 

\begin{theorem}\label{thm 6}
	With the notations defined as above, let
	$$r = \mathrm{min}\{2\leq i \leq q+1: \ h_{i}=0\}$$
	and $\lambda = -ch_{r-1}$. Then the binomial order of $f(X)$ is $r$ and the minimal binomial multiple of $f(X)$ is $X^{r}-\lambda$. Consequently, the order of $f(X)$ is $r\cdot\mathrm{ord}(\lambda)$.
\end{theorem}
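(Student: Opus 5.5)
The plan is to identify the coefficients of $H_f(X)$ with the Lucas sequence $\{U_n\}$ attached to $f$, and then invoke Corollary \ref{coro 3} together with Lemma \ref{lem 1}(3).

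First I will compute $H_f$ directly. Let $\theta$ and $\theta^q$ be the roots of $f(X)$. Since
$$X^{q+2}-cX = X\,(X^{q+1}-\theta^{q+1}),$$
and
$$\frac{X^{q+1}-\theta^{q+1}}{(X-\theta)(X-\theta^q)} = \frac{1}{\theta-\theta^q}\left(\frac{X^{q+1}-\theta^{q+1}}{X-\theta}-\frac{X^{q+1}-\theta^{q+1}}{X-\theta^q}\right),$$
expanding each geometric quotient gives
$$\frac{X^{q+1}-c}{f(X)}=\sum_{k=0}^{q-1}\frac{\theta^{k+1}-\theta^{q(k+1)}}{\theta-\theta^q}\,X^{q-1-k}\cdot(\text{adjusted}).$$
I will do this cleanly. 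The coefficient of $X^{q-1-k}$ in $(X^{q+1}-\theta^{q+1})/(X-\theta)$ is $\theta^{k}\cdot$ (suitable power of $\theta^q$), so it is cleaner to use the symmetric form $\sum_{i+j=q-1-k}$.

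As a simpler alternative, I will verify the recurrence instead. The quotient $G(X)=(X^{q+1}-c)/f(X)=\sum_{k} g_k X^{q-1-k}$ satisfies $f(X)G(X)=X^{q+1}-c$. Comparing coefficients of $X^{q+1},X^{q},\dots$ gives
$$g_0=1,\qquad g_1+bg_0=0,\qquad g_k+bg_{k-1}+cg_{k-2}=0.$$
This is exactly the recurrence for $U_{k+1}$ with $U_1=1$ and $U_2=-b$, so $g_k=U_{k+1}$. Since $H_f=XG$, we get $h_i=U_i$ for $1\le i\le q$, and $h_{q+1}=0=U_{q+1}$. The last identity holds because $U_{q+1}=(\theta^{q+1}-\theta^{q^2+q})/(\theta-\theta^q)=0$ by Lemma \ref{lem 5}, as $\theta^{q^2}=\theta$.

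Then $r$, the first index $i\ge2$ with $h_i=0$, is the least $n\ge1$ with $U_n=0$, since $U_1=1\neq0$. By Corollary \ref{coro 3}, $r=\mathrm{ord}_{\mathrm b}(f)$ and the minimal binomial multiple is $X^r+cU_{r-1}=X^r-\lambda$ with $\lambda=-ch_{r-1}$. I should double-check the sign convention in Corollary \ref{coro 3} using the congruence $X^r\equiv U_rX-cU_{r-1}$. With $U_r=0$ this gives $X^r\equiv -cU_{r-1}$, so $X^r-\lambda$ with $\lambda=-cU_{r-1}$ is the correct binomial, and the statement's $\lambda$ matches. The minimum is attained within $2\le i\le q+1$ because $h_{q+1}=0$.

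Finally, Lemma \ref{lem 1}(3) gives $\mathrm{ord}(f)=r\cdot\mathrm{ord}(\lambda)$. The only delicate step is the index bookkeeping in the coefficient comparison, which identifies $h_i$ with $U_i$ rather than with $U_{i\pm1}$; everything else is quoting earlier results.
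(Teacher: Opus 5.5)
Your proposal is correct and is essentially the paper's proof. The paper also identifies $h_i=U_i$ by comparing coefficients in $f(X)H_f(X)=X^{q+2}-cX$ (Lemma~\ref{lem 11}, which reads the recurrence off directly up to $i=q+1$ rather than checking $U_{q+1}=0$ via Binet), and then applies Corollary~\ref{coro 3} and Lemma~\ref{lem 1}(3), which are the right results to cite; you should delete the unfinished geometric-series paragraph ending in ``(adjusted)'', since it plays no role in the argument.
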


As a preparation, we first consider the following lemma.

\begin{lemma}\label{lem 11}
	Let $\{U_{i}\}_{i \geq 0}$ be the linear recurrence sequence associated to $f(X)$. Then $h_{i}=U_{i}$ for any $1 \leq i \leq q+1$.
\end{lemma}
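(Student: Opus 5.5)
The plan is to compute the quotient $H_f(X)$ directly by comparing coefficients in the identity
$$f(X)H_f(X) = X^{q+2}-cX,$$
and to show that the coefficients $h_i$ obey the same recurrence and initial values as $U_i$. Write $H_f(X)=\sum_{i=1}^{q+1}h_iX^{q+1-i}$ and $f(X)=X^2+bX+c$. Then the coefficient of $X^{q+2-k}$ in $f(X)H_f(X)$ is
$$h_k+bh_{k-1}+ch_{k-2},$$
with the convention $h_0=h_{-1}=0$ and $h_i=0$ for $i>q+1$.

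\textbf{Initial values.} Comparing the coefficients of $X^{q+1}$ and $X^{q}$ (the cases $k=1,2$) gives $h_1=1$ and $h_2+bh_1=0$, so $h_2=-b$. These agree with $U_1=1$ and $U_2=-bU_1-cU_0=-b$.

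\textbf{Recurrence.} For $3\le k\le q+1$, the monomial $X^{q+2-k}$ has degree between $1$ and $q-1$. When $q\ge 2$, none of these degrees equals $q+2$, and the only one that could equal $1$ is $k=q+1$. So for $3\le k\le q$ the coefficient vanishes on the right side, giving
$$h_k=-bh_{k-1}-ch_{k-2}.$$
This is exactly the recurrence defining $U_k$, so induction gives $h_i=U_i$ for $1\le i\le q$.

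\textbf{The last index.} The step needing care is $i=q+1$, because the right side contributes $-c$ to the coefficient of $X^1$. The comparison at $k=q+1$ reads
$$h_{q+1}+bh_q+ch_{q-1}=-c.$$
It does not determine $h_{q+1}$ via the recurrence alone, so I would treat it separately. The text already notes that $h_{q+1}=0$, since $f\mid X^{q+1}-c$ forces $H_f=X\cdot\frac{X^{q+1}-c}{f}$ to have zero constant term. It therefore remains to show $U_{q+1}=0$. By Lemma~\ref{lem 5},
$$U_{q+1}=\frac{\theta^{q+1}-\theta^{(q+1)q}}{\theta-\theta^q}.$$
Here $\theta^{q^2}=\theta$, so $\theta^{(q+1)q}=\theta^{q}\theta^{q^2}=\theta^{q+1}$, and the numerator vanishes. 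Hence $h_{q+1}=0=U_{q+1}$.

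As a consistency check, the $X^1$ equation then reads $bU_q+cU_{q-1}=-c$. By the recurrence this is $-U_{q+1}-cU_{q-1}+cU_{q-1}$, which only restates the equation, so nothing is contradicted. Alternatively, one can avoid the issue by observing that $U_{q+1}=-bU_q-cU_{q-1}$, so the $X^1$ coefficient identity becomes $h_{q+1}-U_{q+1}=-c-(-c)$ only if one also uses the constant-term equation $ch_{q+1}=0$. For this reason I would go through the Binet argument above, which is cleaner.

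I expect the only real obstacle to be this boundary bookkeeping at degrees $1$ and $0$. The small case $q=2$, where the index ranges overlap, should be checked separately.
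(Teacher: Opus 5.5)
Your strategy is the paper's: compare coefficients in $f(X)H_f(X)=X^{q+2}-cX$ and induct on the recurrence. However, your coefficient bookkeeping is shifted by one degree, and the shift produces false statements. Since $X^2\cdot h_kX^{q+1-k}=h_kX^{q+3-k}$, the expression $h_k+bh_{k-1}+ch_{k-2}$ is the coefficient of $X^{q+3-k}$, not of $X^{q+2-k}$. In particular $h_1$ is the leading coefficient, at $X^{q+2}$. Your stated derivation of $h_1=1$ from the coefficient of $X^{q+1}$ is therefore wrong, since the right-hand side has coefficient $0$ there.

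The case $k=q+1$ is the coefficient of $X^2$, where $X^{q+2}-cX$ has coefficient $0$. So the correct equation is $h_{q+1}+bh_q+ch_{q-1}=0$, not $-c$. The $-c$ belongs to $X^1$ and gives $bh_{q+1}+ch_q=-c$ (hence $h_q=-1$), while $X^0$ gives $ch_{q+1}=0$. Your ``consistency check'' actually exposes the error rather than confirming anything. By the recurrence, $bU_q+cU_{q-1}=-U_{q+1}=0$, which is not $-c$ because $c\neq 0$. So the claim that nothing is contradicted is false.

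The conclusion nevertheless survives. The equations you actually feed into the induction ($h_1=1$, $h_2=-b$, and $h_k=-bh_{k-1}-ch_{k-2}$ for $3\le k\le q$) are true, only attached to the wrong monomials. Your treatment of the last index is also correct and independent of the false equation: $h_{q+1}=0$ from $f\mid X^{q+1}-c$, together with the Binet computation $U_{q+1}=0$.

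To repair the write-up, fix the index shift. The recurrence then holds for every $3\le k\le q+1$, since these are the coefficients of $X^q,\dots,X^2$, all zero on the right. The induction then reaches $i=q+1$ directly, exactly as in the paper, with no special last step and no separate check for $q=2$. Alternatively, keep your Binet argument for $i=q+1$, which is valid but unnecessary. In that case delete the false $k=q+1$ equation, the consistency check, and the ``alternatively'' remark.
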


\begin{proof}
	By the definition of $H_{f}(X)$ we have
	\begin{equation}\label{eq 6}
		X^{q+2}-cX = (X^{2}+bX+c)(h_1X^q+h_2X^{q-1}+\cdots+h_qX+h_{q+1}).
	\end{equation}
	Comparing the coefficients on both sides of \eqref{eq 6} yields that $h_{1}=1$, $h_{2}=-b$, and
	$$h_{i} = -bh_{i-1} - ch_{i-2}, \ \forall 3 \leq i \leq q+1.$$
	Noting that $U_{2} = -bU_{1}-cU_{0} = -b$, we obtain by induction that
	$$h_{i} = U_{i}, \ \forall 1 \leq i \leq q+1.$$
\end{proof}

Now we give the proof of Theorem \ref{thm 6}

\begin{proof}[Proof of Theorem \ref{thm 6}]
	First since $h_{q+1}=0$, there exists a smallest integer $2 \leq r \leq q+1$ such that $h_{r}=0$, and by this requirement one has $\lambda = -ch_{r-1} \neq 0$. Combining Corollary \ref{coro 1} and Lemma \ref{lem 11}, we have $\mathrm{ord}_{\mathrm{b}}(f) = r$ and $X^{r}-\lambda$ is exactly the minimal binomial multiple of $f(X)$. It follows from Proposition \ref{prop 1} that the order of $f(X)$ is $r\cdot\mathrm{ord}(\lambda)$.
\end{proof}

\begin{corollary}
	If none of $h_{1},\cdots,h_{q}$ is zero, then
	$$\mathrm{ord}_{\mathrm{b}}(f) = q+1, \quad \mathrm{ord}(f) = (q+1)\cdot\mathrm{ord}(c),$$
	and the minimal binomial multiple of $f(X)$ is $X^{q+1}-c$. If, furthermore, $c$ is a primitive element of $\mathbb{F}_{q}$, then $f(X)$ is primitive.
\end{corollary}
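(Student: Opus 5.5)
This is a direct specialization of Theorem \ref{thm 6} to the case where the first zero index is $r=q+1$. By Lemma \ref{lem 11} we have $h_i=U_i$ for $1\le i\le q+1$. We also know $h_{q+1}=0$, since $f(X)\mid X^{q+1}-c$. So if none of $h_1,\dots,h_q$ vanishes, the minimum in Theorem \ref{thm 6} is attained at $r=q+1$. That theorem then gives $\mathrm{ord}_{\mathrm b}(f)=q+1$, minimal binomial multiple $X^{q+1}-\lambda$ with $\lambda=-ch_q$, and $\mathrm{ord}(f)=(q+1)\,\mathrm{ord}(\lambda)$.

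The one real step is to identify $\lambda=c$, and I would do this in two ways.

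\emph{Structural route.} We have $f\mid X^{q+1}-\lambda$ and $f\mid X^{q+1}-c$. Subtracting, $f$ divides the constant $c-\lambda$, so $\lambda=c$. Equivalently, by Lemma \ref{lem 9} with $r=q+1$, $c=\lambda^{(q+1)/r}=\lambda$.

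\emph{Direct check.} Use the congruence $X^{n}\equiv U_nX-cU_{n-1}\pmod{f}$ with $n=q+1$. Since $U_{q+1}=0$, this reads $X^{q+1}\equiv -cU_q$. Comparing with $X^{q+1}\equiv c$ gives $-cU_q=c$, so $\lambda=-ch_q=c$. This also shows $h_q=-1$, which is consistent with $h_q\neq 0$.

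With $\lambda=c$, Lemma \ref{lem 1}(3) gives $\mathrm{ord}(f)=(q+1)\,\mathrm{ord}(c)$.

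For the last assertion, suppose $c$ is primitive. Then $\mathrm{ord}(c)=q-1$, so $\mathrm{ord}(f)=(q+1)(q-1)=q^2-1$. Since $f$ is irreducible of degree $2$, Lemma \ref{lem 2} shows $f$ is primitive. Alternatively, Lemma \ref{lem 4} with $k=2$ applies, since $a_0=c$ is primitive and $\mathrm{ord}_{\mathrm b}(f)=q+1=\frac{q^2-1}{q-1}$.

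The only point needing care is the identification $\lambda=c$, and the divisibility argument settles it. Everything else is bookkeeping from Theorem \ref{thm 6} and Lemma \ref{lem 11}.
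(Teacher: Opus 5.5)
Your proof is correct and follows essentially the same route as the paper: you specialize Theorem~\ref{thm 6} to $r=q+1$, identify $\lambda=-ch_q=c$, and then deduce the order and primitivity (the paper uses Lemma~\ref{lem 4} for the last step). The only difference is cosmetic: the paper obtains $h_q=-1$ by comparing the coefficients of $X$ in \eqref{eq 6} using $h_{q+1}=0$, whereas you get $\lambda=c$ from $f\mid c-\lambda$ or from the remainder formula, and these are equivalent.
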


\begin{proof}
	By Theorem \ref{thm 6} the binomial order of $f(X)$ is $q+1$, and from the identity \eqref{eq 6} it is trivial to check $h_{q}=-1$. Thus by Theorem \ref{thm 6} again the minimal binomial multiple of $f(X)$ is $X^{q+1}-c$, and the order of $f(X)$ is $(q+1)\cdot\mathrm{ord}(c)$. The last assertion follows immediately form Lemma \ref{lem 4}.
\end{proof}

\section*{Acknowledgment}
The first author was supported by Basic Research Program Young Scientists Guidance Project of Guizhou Province (QN[2025]186).

\section*{Data availability}
Data sharing not applicable to this article as no datasets were generated or analysed during the current study.

\section*{Declaration of competing interest}
The authors declare that we have no known competing financial interests or personal relationships 
that could be perceived to influence the work reported in this paper.

\end{document}